\title[A. p. pseudodifferential operators and Gevrey classes]{Almost periodic pseudodifferential operators and Gevrey classes}
\author[A. Oliaro]{Alessandro Oliaro}
\address{Dipartimento di Matematica, Universit{\`a} di Torino, Via Carlo Alberto 10, 10123 Torino (TO), Italy.}
\email{alessandro.oliaro@unito.it}
\author[L. Rodino]{Luigi Rodino}
\address{Dipartimento di Matematica, Universit{\`a} di Torino, Via Carlo Alberto 10, 10123 Torino (TO), Italy.}
\email{luigi.rodino@unito.it}
\author[P. Wahlberg]{Patrik Wahlberg}
\address{Dipartimento di Matematica, Universit{\`a} di Torino, Via Carlo Alberto 10, 10123 Torino (TO), Italy.}
\email{patrik.wahlberg@unito.it}
\numberwithin{equation}{section}          %Detta gr att man f•r
\newtheorem{thm}{Theorem}
\numberwithin{thm}{section}
\newcommand{\rubrik}{}
\newtheorem{prop}[thm]{Proposition}
\newtheorem{cor}[thm]{Corollary}
\newtheorem{lem}[thm]{Lemma}
\theoremstyle{definition}
\newtheorem{defn}[thm]{Definition}
\newtheorem{example}[thm]{Example}
\theoremstyle{remark}
\newtheorem{rem}[thm]{Remark}              %T o m hit r bara allmn
\newcommand{\pd}[1] {\partial ^#1}
\newcommand{\pdd}[2] {\partial_{#1} ^#2}
\newcommand{\ro}{\mathbb R}
\newcommand{\no}{\mathbb N}
\newcommand{\rr}[1]{\mathbb R^{#1}}
\newcommand{\rrb}[1]{\mathbb R_B^{#1}}
\newcommand{\nn}[1]{\mathbb N^{#1}}
\newcommand{\zz}[1]{\mathbb Z^{#1}}
\newcommand{\co}{\mathbb C}
\newcommand{\cc}[1]{\mathbb C^{#1}}
\newcommand{\ep}{\varepsilon}
\newcommand{\de}{\delta}
\newcommand{\la}{\lambda}
\newcommand{\La}{\Lambda}
\newcommand{\supp}{\operatorname{supp}}
\newcommand{\eabs}[1]{\langle #1\rangle}
\newcommand{\vrum}{\vspace{0.1cm}}
\newcommand{\wt}{\widetilde}
\newcommand{\wh}{\widehat}
\begin{document}

\begin{abstract}
We study almost periodic pseudodifferential operators acting on almost periodic functions $G_{\rm ap}^s(\rr d)$ of Gevrey regularity index $s \geq 1$. We prove that almost periodic operators with symbols of H\"ormander type $S_{\rho,\delta}^m$ satisfying an $s$-Gevrey condition are continuous on $G_{\rm ap}^s(\rr d)$ provided $0 < \rho \leq 1$, $\delta=0$ and $s \rho \geq 1$. A calculus is developed for symbols and operators using a notion of regularizing operator adapted to almost periodic Gevrey functions and its duality. We apply the results to show a regularity result in this context for a class of hypoelliptic operators.
\end{abstract}

\keywords{Pseudodifferential calculus, almost periodic functions, Gevrey classes, Gevrey hypoellipticity. MSC 2010 codes: 35S05, 35B15, 35B65, 35H10}

\maketitle

\let\thefootnote\relax\footnotetext{Corresponding author: Alessandro Oliaro,
phone +390116702863, fax +390116702878. Authors' address: Dipartimento di Matematica, Universit{\`a} di Torino, Via Carlo Alberto 10, 10123 Torino (TO), Italy. Email \{alessandro.oliaro,luigi.rodino,patrik.wahlberg\}@unito.it.}

%%%%%%%%%%%%%%%%%%%%%%
\section{Introduction}
%%%%%%%%%%%%%%%%%%%%%%

Almost periodic (a.p.) pseudodifferential operators ($\Psi DO$) on $\rr d$ are operators defined by H\"ormander type symbols $S_{\rho,\delta}^m$ that are almost periodic in the space variables.
They have been studied thoroughly, in particular by M.~A.~Shubin \cite{Shubin1,Shubin2,Shubin3,Shubin4,Shubin4b}, and by
Coburn, Moyer and Singer \cite{Coburn1}, Dedik \cite{Dedik1}, Filippov \cite{Filippov1}, Pankov \cite{Pankov1}, Rabinovich \cite{Rabinovich1} and Wahlberg \cite{Wahlberg1}. The related class of pseudodifferential operators on the torus $\mathbb T^d$ (where the almost periodicity is replaced by periodicity) is treated by Ruzhansky and Turunen \cite{Ruzhansky1} (see also their recent monograph \cite{Ruzhansky2}).

Shubin proved that a.p.~$\Psi DO$s act continuously on smooth a.p. functions, and that the operator norm on $L^2$ equals that on $B^2$ \cite[Theorem 4.4]{Shubin4}. Here $B^2(\rr d)$ is the Hilbert space of Besicovitch a.p. functions such that the Fourier coefficients are square summable. Moreover, Shubin introduced scales of Sobolev spaces $W_t^p(\rr d)$ of a.p. functions, $t \in \ro$, $p \in [1,\infty]$, defined by the norm
$$
\| f \|_{W_t^p} = \left( \sum_{\xi \in \rr d} {\eabs \xi}^{pt} |\wh f_\xi|^p \right)^{1/p},
$$
(for $p \in [1,\infty)$, with obvious modification for $p=\infty$)
where $\wh f_\xi = \mathscr M_x (f(x) e^{- 2 \pi i x \cdot \xi})$ is the Bohr--Fourier transform, $\mathscr M$ being the mean value functional of a.p. functions \cite{Levitan1,Corduneanu1}, and $\eabs{\xi}=(1+|\xi|^2)^{1/2}$.

For $p=2$ and an a.p.~$\Psi DO$ denoted $A$ with symbol in $S_{\rho,\delta}^m$, $0 \leq \delta < \rho \leq 1$, Shubin proved the continuity of $A: W_t^2(\rr d) \mapsto W_{t-m}^2(\rr d)$ for any $t \in \ro$ \cite[Theorem 4.3]{Shubin1}.
Furthermore, in the scale $W_t^2$ and $W_{-\infty}^2=\bigcup_{t \in \ro} W_t^2$, he proved a regularity result for
formally hypoelliptic operators.
More precisely, \cite[Theorem 5.3]{Shubin4} says that if $A$ is a formally hypoelliptic a.p.~$\Psi DO$ with symbol in $S_{\rho,\delta}^{m,m_0}$, $0 \leq \delta < \rho \leq 1$, and $u \in W_{-\infty}^2(\rr d)$ then $A u \in W_t^2(\rr d)$ implies $u \in W_{t+m_0}^{2}(\rr d)$ for any $t \in \ro$.

This paper treats almost periodic pseudodifferential operators acting on a.p. functions that are Gevrey regular of order $s \geq 1$.
The Gevrey scale measures regularity between analytic ($s=1$) and $C^\infty$ (see e.g. \cite{Rodino1}).
We are aiming for Gevrey versions of Shubin's results.
The general $s$-Gevrey functions $f \in G^s(\Omega)$ on an open set $\Omega \subseteq \rr d$ satisfy by definition estimates of the type
\begin{equation}\label{localgevrey1}
\sup_{x \in K} |\pd \alpha f(x)| \leq C_K^{1+|\alpha|} (\alpha!)^s, \quad \alpha \in \nn d, \quad C_K>0,
\end{equation}
for each compact set $K \subseteq \Omega$ \cite{Rodino1}.
A crucial remark, and the first motivation for our paper, is the following.
For a.p.~$s$-Gevrey functions, $s>1$, it is a priori not clear whether a global constant $C>0$ exists such that
\begin{equation}\label{globalgevrey1}
|\pd \alpha f(x)|\leq C^{1+|\alpha|} (\alpha!)^s, \quad \alpha \in \nn d, \quad x \in \rr d.
\end{equation}
We show by means of an example (see Example \ref{counterexample1}) that there exist a.p.~$s$-Gevrey functions such that no finite constant can be used in a global estimate.
After that, we restrict our attention to a.p.~$s$-Gevrey functions that satisfy the global bound \eqref{globalgevrey1}. We introduce the union of such spaces over all $C>0$, denoted $G_{\rm ap}^s(\rr d)$, and equip it with its natural inductive limit topology as a union of Banach spaces.

We define symbol classes such that $a(\cdot,\xi)$ are a.p. functions for all $\xi \in \rr d$, and moreover obey estimates of the form
\begin{equation}\label{gevreysymbol1}
|\pdd x \alpha \pdd \xi \beta a(x,\xi)|
\leq C^{1+|\alpha|+|\beta|} (\alpha!)^{s(\rho-\delta)} \beta! \eabs{\xi}^{m-\rho|\beta|+\delta |\alpha|},
\quad x \in \rr d, \quad \eabs{\xi} \geq B |\beta|^s,
\end{equation}
for multi-indices $\alpha,\beta \in \nn d$, where $C>0$, $B>0$, $m \in \ro$, $0 \leq \delta < \rho \leq 1$, and $s(\rho-\delta) \geq 1$.
A pseudodifferential operator with symbol $a$ acting on a smooth a.p. function $f$ is defined by
\begin{equation}\label{psidodef0}
a(x,D) f(x) = \lim_{\ep \rightarrow +0} \iint_{\rr {2d}} \chi(\ep y)
\chi(\ep\xi) e^{2 \pi i \xi \cdot (x-y)} a(x,\xi) f(y) {\,} dy {\,} d\xi
\end{equation}
where $\chi \in C_c^\infty(\rr d)$ equals one in a neighborhood of the origin.

The symbols defined by \eqref{gevreysymbol1}, without the a.p. condition on $a(\cdot,\xi)$ for all $\xi \in \rr d$, are global versions of symbols for $\Psi DO$s acting on Gevrey spaces by means of the definition
\begin{equation}\label{psidodef1}
a(x,D) f(x) = \int_{\rr d} e^{2 \pi i x \cdot \xi} a(x,\xi) \mathscr F f(\xi) d\xi, \quad f \in G_c^s(\rr d), \quad s>1,
\end{equation}
where $\mathscr F$ denotes the Fourier transform and $G_c^s$ denotes the compactly supported $s$-Gevrey functions.
Many authors have contributed to the calculus of $\Psi DO$s of this form acting on Gevrey spaces, among them
Boutet de Monvel and Kr\'ee \cite{Boutetdemonvel1}, Hashimoto, Matsuzawa and Morimoto \cite{Hashimoto1}, Liess and Rodino \cite{Liess1}, and
Rodino and Zanghirati \cite{Rodino2,Zanghirati1}. For a comprehensive survey we refer to Rodino \cite[Chapter 3]{Rodino1}.
In the above cited literature, the symbol estimates are assumed to hold only locally when $x \in K$ for compact sets $K \subseteq \rr d$, with constants $C=C_K$ and $B=B_K$ depending on $K$, and moreover the polynomial growth term in the symbol $\eabs{\xi}^m$, where $m$ corresponds to a finite order, is relaxed to an exponential bound $C_\ep \exp(\ep |\xi|^{1/s})$ for some $C_\ep>0$, for any $\ep>0$.
This suffices to prove that $a(x,D)$ defined by \eqref{psidodef1} is continuous $a(x,D): G_c^s(\rr d) \mapsto G^s(\rr d)$. By duality $a(x,D)$ extends to a continuous operator from compactly supported $s$-Gevrey ultradistributions to $s$-Gevrey ultradistributions \cite{Rodino1}.

In the following we describe our results where we always assume $\delta=0$.
We show that a symbol that is a.p. in the first variable and satisfying \eqref{gevreysymbol1} gives an operator defined by \eqref{psidodef0} that is continuous on $G_{\rm ap}^s(\rr d)$, for $s \geq 1$. In fact, we show a more general result where the symbol $a(x,\xi)$ in \eqref{psidodef0} is replaced by an amplitude $a(x,y,\xi)$ that depends on both space variables $x,y$ and satisfying estimates corresponding to \eqref{gevreysymbol1}. The continuity admits us to extend the action of $a(x,D)$ to a weak$^*$ continuous map on the topological dual $(G_{\rm ap}^s)'(\rr d)$.
We have the embedding $W_{s,0}^1(\rr d) \subseteq (G_{\rm ap}^s)'(\rr d)$, where $W_{s,0}^1(\rr d)$ denotes the space of a.p. functions $f$ such that
$$
\sum_{\xi \in \rr d} \exp(-\ep |\xi|^{1/s}) |\wh f_\xi| < \infty \quad \forall \ep>0.
$$
It follows that
$$
a(x,D): W_{s,0}^1(\rr d) \mapsto (G_{\rm ap}^s)'(\rr d)
$$
is continuous.
We will adopt $W_{s,0}^1(\rr d)$, rather than $(G_{\rm ap}^s)'(\rr d)$, as a universe set, corresponding to the $s$-Gevrey ultradistributions of compact support in the local calculus \cite{Rodino1}.

In the next step we develop a calculus of a.p.~$\Psi DO$s acting on $G_{\rm ap}^s(\rr d)$. We introduce the notion of a regularizing operator as an operator that is continuous
$$
a(x,D): W_{s,0}^1(\rr d) \mapsto G_{\rm ap}^s(\rr d).
$$
Adapting the calculus of \cite[Chapter 3.2]{Rodino1} (cf. \cite{Hashimoto1,Liess1,Zanghirati1}) to the current situation, we define the concept of a formal sum $\sum_{j \geq 0} a_j$ of symbols and the equivalence relation of formal sums $\sum_{j \geq 0} a_j \sim \sum_{j \geq 0} b_j$, and we prove that $a \sim 0$ implies that $a(x,D)$ is regularizing. We also prove that an a.p.~$\Psi DO$ defined by an amplitude of three variables $a(x,y,\xi)$ can be written as an operator with symbol $b \sim \sum_{j \geq 0} b_j$, where
\begin{equation}\nonumber
b_j(x,\xi) = (2 \pi i)^{-j} \sum_{|\alpha|=j} (\alpha!)^{-1} \pdd y \alpha \pdd \xi \alpha a(x,y,\xi) \big|_{y=x},
\end{equation}
modulo a regularizing operator. The proof is based on ideas from \cite{Rodino1,Hashimoto1,Liess1,Zanghirati1} with some modifications and additional arguments, which take into account almost periodicity (cf. \cite{Shubin1,Shubin2,Shubin3,Shubin4,Shubin4b})
and the uniformness of estimates over the space variables.

Using the the latter result, we prove the following essential part of the calculus. The composition of two a.p.~$\Psi DO$s $a(x,D)$ and $b(x,D)$ can be written as $(a \circ b)(x,D)$ modulo a regularizing operator, where $a \circ b$ is the formal sum $\sum_{j \geq 0} c_j$ where
\begin{equation}\nonumber
c_j = (2 \pi i)^{-j} \sum_{|\alpha|=j} (\alpha!)^{-1} \ \pdd \xi \alpha a \ \pdd x \alpha b.
\end{equation}

As an application of the calculus we show the following result. Let $a$ be an a.p. symbol of type \eqref{gevreysymbol1} that is formally $s$-hypoelliptic in the sense that
there exists $C,C_1>0$, $A,B \geq 0$ and $m_0 \leq m$ such that
\begin{equation}\nonumber
\begin{aligned}
|a(x,\xi)|
\geq & \ C_1 \eabs{\xi}^{m_0}, \quad x \in \rr d, \quad |\xi| \geq A, \\
\left| \left( \pdd x \alpha \pdd \xi \beta a (x,\xi) \right) a(x,\xi)^{-1} \right|
\leq & \ C^{|\alpha|+|\beta|} (\alpha!)^{s\rho} \beta! \eabs{\xi}^{-\rho|\beta|}, \\
& x \in \rr d, \quad |\xi| \geq \max(A,B|\beta|^s),
\end{aligned}
\end{equation}
with $0<\rho \leq 1$, $s \geq 1$, $s \geq 1/\rho$.
Then there exists a symbol $b$ of order $-m_0$ such that $R_1$ and $R_2$ are regularizing, where
$$
R_1 = a(x,D) b(x,D) - I, \quad R_2 = b(x,D) a(x,D) - I.
$$
This gives the following regularity result.
If $a$ is an a.p. formally $s$-hypoelliptic symbol in the above sense, then $f \in W_{s,0}^1(\rr d)$ and $a(x,D)f \in G_{\rm ap}^s(\rr d)$ imply $f \in G_{\rm ap}^s(\rr d)$.

As a consequence we obtain an almost periodic Gevrey version of the celebrated result by H\"ormander \cite{Hormander-1} and Malgrange \cite{Malgrange1}, saying that if a linear partial differential operator $P$ has constant strength and a symbol $p(x,\xi)$ such that $p(x_0,\cdot)$ is a hypoelliptic polynomial for some $x_0 \in \rr d$,
then $P$ is hypoelliptic. An $s$-Gevrey version of this result is \cite[Theorem 3.3.13]{Rodino1}.

Our almost periodic $s$-Gevrey result says that if $P$ is a linear partial differential operator of constant strength with coefficients in $G_{\rm ap}^1(\rr d)$, and a symbol $p$ such that $p(x_0,\cdot)$ is a hypoelliptic polynomial for some $x_0 \in \rr d$ and $|p(x,\xi)| \geq C |p(x_0,\xi)|$ for $C>0$ and $|\xi|$ large and $x \in \rr d$, then $f \in W_{s,0}^1(\rr d)$ and $Pf \in G_{\rm ap}^s(\rr d)$ imply $f \in G_{\rm ap}^s(\rr d)$, for $s \geq 1$.

Let us finally describe how the material is ordered, and add some comments.
Sections \ref{sectionprel} and  \ref{sectionap} are devoted to Gevrey almost periodicity. Almost periodic ultra-distributions were treated by G\'omez--Collado in \cite{GomezCollado1} following the lines of L. Schwartz' almost periodic distributions \cite{Schwartz1}. We follow here a somewhat different approach. First we define $G_{\rm ap}^s (\rr d)$ as the space of almost periodic $s$-Gevrey functions that can be estimated as $|\pd \alpha f(x)|\leq C^{1+|\alpha|}(\alpha!)^s$ with a global constant $C>0$. We also define its topological dual $(G_{\rm ap}^s)' (\rr d)$. The pathologies that one meets when the set of frequencies $\La = \{ \xi \in \rr d: \wh f_\xi \neq 0 \}$ is bounded in $\rr d$ \cite{Shubin1,Shubin4} are not excluded.
In fact, in this general case for $f \in G_{\rm ap}^s (\rr d)$, the bound in Lemma \ref{fouriergevrey1}
$$
| \wh f_\xi | \leq C \exp \left( - \ep |\xi|^{1/s} \right), \quad \ep>0, \quad \xi \in \rr d,
$$
give no more information than boundedness, and we cannot identify $f$ with an element of $W_{s,0}^1 (\rr d)$.
A more comforting situation is recaptured by imposing on $\La$ the (very mild) non-boundedness condition, cf. \eqref{frequencyrequirement1}:
$$
\sum_{\xi \in \La} \exp(- \ep |\xi|^{1/s}) < \infty \quad \forall \ep>0.
$$
Under such an assumption
we can embed $G_{\rm ap}^s (\rr d)$ as a subspace of $W_{s,0}^1(\rr d)$ and the inclusion $G_{\rm ap}^s (\rr d) \subseteq (G_{\rm ap}^s)' (\rr d)$ becomes an embedding, see \eqref{frequencysepinclusion1} in Proposition \ref{frequencyseparation1} and Proposition \ref{dualembedding1}.

Sections \ref{sectionappsdo} and \ref{sectioncalculus} are devoted to Gevrey almost periodic pseudodifferential operators. The main technical difficulty is to keep track when $x \rightarrow \infty$ of the constants in the local estimates of \cite{Hashimoto1,Liess1,Rodino1,Zanghirati1}. We are able to do it in general only for $\delta=0$, see in particular the basic Proposition \ref{gevreycont2} on continuity, though the main definitions and some results are set in the generic classes $S_{\rho,\delta}^m$.

In Section \ref{sectionconstant} we give an application to a.p. partial differential operators of constant strength.

%%%%%%%%%%%%%%%%%%%%%%%%%%%%%%%%%%%%%%%%%%%%%%%%%%%%%%%%%%%%%%%%%%%%%%
\section{Preliminaries on almost periodic functions}\label{sectionprel}
%%%%%%%%%%%%%%%%%%%%%%%%%%%%%%%%%%%%%%%%%%%%%%%%%%%%%%%%%%%%%%%%%%%%%%

The topological dual of a topological vector space $X$ is denoted $X'$.
An inclusion $X \subseteq Y$ of two topological spaces $X$ and $Y$ is called an embedding if it is continuous.
The symbol $C(\rr d)$ denotes the space of continuous complex-valued functions and $C_b(\rr d)=C(\rr d) \cap L^\infty (\rr d)$.
We write $D_j=-i \partial/\partial x_j$, $D=(D_1,D_2,\dots, D_d)$, and for $\alpha=(\alpha_1,\dots,\alpha_d) \in \nn d$,
$$
\pdd x \alpha = \frac{\partial^{\alpha_1}}{\partial x^{\alpha_1}} \cdots \frac{\partial^{\alpha_d}}{\partial x^{\alpha_d}}.
$$
For $m \in \no$, the space $C_b^m(\rr d)$ consists of all functions $f$ such that $\pd \alpha f \in C_b(\rr d)$ for all $\alpha \in \nn d$ with $|\alpha| \leq m$, and $C_b^\infty(\rr d) = \bigcap_{m \in \no} C_b^m(\rr d)$. The space of smooth functions with compact support is denoted $C_c^\infty(\rr d)$. The Schwartz space is written $\mathscr S(\rr d)$ and consists of smooth functions such that a derivative of any order multiplied by any polynomial is uniformly bounded. Its topological dual is the space of tempered distributions $\mathscr S'(\rr d)$.
The Fourier transform for $f \in L^1(\rr d)$ is defined by
$$
\mathscr F f(\xi) = \int_{\rr d} f(x) e^{- 2 \pi i \xi \cdot x} dx.
$$
The $L^2$-product $(\cdot,\cdot)_{L^2}$ is conjugate linear in the second argument. For distributions and test functions, e.g. $f \in \mathscr S'(\rr d)$ and $\varphi \in \mathscr S(\rr d)$ we denote by $(f,\varphi)= \langle f, \overline{\varphi} \rangle$ the conjugate linear action of $f$ on $\varphi$. Thus $\langle f,\varphi \rangle$ denotes $f$ acting linearly on $\varphi$.

The space of trigonometric polynomials is denoted $TP(\rr d)$ and consists of finite linear combinations of the form
\begin{equation}\label{trigpol1}
f(x) = \sum_{\xi \in \rr d} a_\xi \ e^{2 \pi i \xi \cdot x}, \quad a_\xi \in \co.
\end{equation}
$TP(\La)$ denotes the subspace of $TP(\rr d)$ where $\xi \in \La$ for $\La \subseteq \rr d$.

The space of uniform almost
periodic functions is denoted $C_{\rm ap}(\rr d)$ and defined as follows. A
set $U \subseteq \rr d$ is called \textit{relatively dense} if there
exists a compact set $K \subseteq \rr d$ such that $(x+K) \cap U \neq
\emptyset$ for any $x \in \rr d$. For $\ep>0$, an element $\tau \in \rr d$ is
called an $\ep$-almost period of a function $f \in C_b(\rr d)$ if
$\sup_x |f(x+\tau)-f(x)|<\ep$. The space $C_{\rm ap}(\rr d)$ is defined as the
set of all $f \in C_b(\rr d)$ such that, for any $\ep>0$, the set
of $\ep$-almost periods of $f$ is relatively dense.
With the understanding that the uniform almost periodic functions is a subspace
of $C_b(\rr d)$, this original definition by H. Bohr is equivalent
to the following three \cite{Levitan1,Shubin4}:

\begin{enumerate}

\item[(i)] The set of translations $\{ f(\cdot -x)\}_{x \in \rr
d}$ is precompact in $C_b(\rr d)$;

\vrum

\item[(ii)] $f=g \circ i_B$, where $i_B$ is the canonical
homomorphism from $\rr d$ into the Bohr compactification $\rrb d$ of
$\rr d$, and $g \in C(\rrb d)$. Hence $f$ can be extended to a
continuous function on $\rrb d$;

\vrum

\item[(iii)] $f$ is the uniform limit of trigonometric
polynomials.

\end{enumerate}

Here the Bohr compactification of a locally compact abelian group $G$ is a compact abelian topological group constructed as the group dual to ($G'$, with discrete topology), that is $G_B=(G'_{\rm discr})'$ \cite{Shubin4}.

The space $C_{\rm ap}(\rr d)$ is a conjugate-invariant complex algebra of
uniformly continuous functions, and a Banach space
with respect to the $L^\infty$ norm.
For $f \in C_{\rm ap}(\rr d)$ the mean
value functional
\begin{equation}\label{meandef1}
\mathscr M(f) = \lim_{T \rightarrow +\infty} T^{-d} \int_{s+K_T} f(x) {\,}
dx,
\end{equation}
where $K_T = \{ x \in \rr d:\ 0 \leq x_j \leq T, \
j=1,\dots,d \}$, exists independent of $s \in \rr d$. By $\mathscr M_x$ we
understand the mean value in the variable $x$ of a function of
several variables. The Bohr--Fourier transformation
\cite{Levitan1} is defined by
\begin{equation}\nonumber
\mathscr F_{B} f (\xi) = \wh f_\xi = \mathscr M_x(f(x) e^{-2 \pi i \xi \cdot x}), \quad \xi \in \rr d, \quad f \in C_{\rm ap}(\rr d),
\end{equation}
and $\wh f_\xi \neq 0$ for at most countably many $\xi \in \rr d$.
The set $\La = \{ \xi \in \rr d: \wh f_\xi \neq 0 \}$ is called the set of frequencies for $f$.
(For $f \in TP(\rr d)$ the frequencies are the $\xi \in \rr d$ such that $a_\xi \neq 0$ in \eqref{trigpol1}.)
A function $f \in C_{\rm ap}(\rr d)$ may be reconstructed from its
Bohr--Fourier coefficients $(\wh f_\xi)_{\xi \in \La}$ using
Bochner--Fej{\' e}r polynomials \cite{Levitan1,Shubin4}.

For $m \in \no$, the space $C_{\rm ap}^m(\rr d)$ is defined as all $f \in
C^m(\rr d)$ such that $\partial^\alpha f \in C_{\rm ap}(\rr d)$ for
$|\alpha| \leq m$, with norm
$$
\| f \|_{C_{\rm ap}^m} = \sum_{|\alpha| \leq m} \sup_{x \in \rr d} | \pd \alpha f(x) |.
$$
We set $C_{\rm ap}^\infty(\rr d) = \bigcap_{m \in \no}
C_{\rm ap}^m(\rr d)$, equipped with the projective limit topology defined by all inclusions
$C_{\rm ap}^\infty(\rr d) \subseteq C_{\rm ap}^m(\rr d)$, $m \in \no$ \cite{Kothe1,Robertson1,Schaefer1}.
By definition, this is the weakest topology such that all inclusions are continuous.
We have $C_{\rm ap}^\infty = C_{\rm ap} \cap C_b^\infty$
\cite{Shubin1}.

The next result is well known and concerns the Bohr--Fourier transformation and differentiation. We include a proof for completeness.

\begin{lem}\label{difffourier1}
If $f \in C_{\rm ap}^\infty(\rr d)$ then $\wh{(\pd \alpha f)}_\xi  = (2 \pi i \xi)^\alpha \wh f_\xi$ for any $\alpha \in \nn d$.
\end{lem}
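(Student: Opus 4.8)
The plan is to reduce the statement to a one-dimensional computation and to the known differentiation rule for the ordinary Fourier transform of a smooth function, but transferred to the mean-value setting. First I would observe that since $f \in C_{\rm ap}^\infty(\rr d)$, all partial derivatives $\pd \alpha f$ lie in $C_{\rm ap}(\rr d)$, so both sides of the claimed identity are well-defined via the Bohr--Fourier transformation \eqref{meandef1}. By induction on $|\alpha|$ it suffices to treat the case $\alpha = e_j$, a single unit multi-index, i.e. to prove $\wh{(\partial_{x_j} f)}_\xi = 2\pi i \xi_j \, \wh f_\xi$; the general case then follows by iterating and using that each $\partial_{x_j} f$ is again in $C_{\rm ap}^\infty(\rr d)$.

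For the base case, the key step is the integration-by-parts identity inside the mean value. Writing $g(x) = f(x) e^{-2\pi i \xi \cdot x}$, one has $\partial_{x_j} g(x) = \big(\partial_{x_j} f(x)\big) e^{-2\pi i \xi \cdot x} - 2\pi i \xi_j f(x) e^{-2\pi i \xi \cdot x}$, so that
\begin{equation}\nonumber
\mathscr M_x\big( (\partial_{x_j} f)(x) e^{-2\pi i \xi \cdot x} \big)
= \mathscr M_x\big( \partial_{x_j} g(x) \big) + 2\pi i \xi_j \, \wh f_\xi .
\end{equation}
Hence the claim reduces to showing that the mean value of a derivative of a function in $C_{\rm ap}^1(\rr d)$ vanishes: $\mathscr M_x(\partial_{x_j} g) = 0$ whenever $g \in C_{\rm ap}^1(\rr d)$ (note $g$ here is a product of an a.p.\ function and a character, hence itself a.p., and $\partial_{x_j} g \in C_{\rm ap}(\rr d)$). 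To see this one uses the defining limit \eqref{meandef1} of the mean value with $s=0$: for fixed $T$, the integral $T^{-d}\int_{K_T} \partial_{x_j} g(x)\, dx$ equals, by the fundamental theorem of calculus in the $x_j$-variable, $T^{-d}$ times an integral over the boundary faces $\{x_j = T\}$ and $\{x_j=0\}$ of $K_T$, each of which is bounded in modulus by $\|g\|_{L^\infty} T^{d-1}$; thus the whole expression is $O(1/T) \to 0$ as $T \to +\infty$. Since the limit in \eqref{meandef1} exists, it must be $0$.

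The main obstacle, and the only point requiring care, is this vanishing-of-the-mean-of-a-derivative lemma: one must be sure that the limit defining $\mathscr M_x(\partial_{x_j} g)$ genuinely exists (which is guaranteed because $\partial_{x_j} g \in C_{\rm ap}(\rr d)$ and $\mathscr M$ is defined on all of $C_{\rm ap}$) so that evaluating it along the particular exhausting family of cubes $K_T$ with $s=0$ computes the correct value, and that the boundary-integral bound is uniform. Everything else is a routine product-rule computation and an induction. Assembling the pieces: the base case $\alpha=e_j$ follows from the two displays above, and then for general $\alpha$ one writes $\pd \alpha = \partial_{x_{j}} \pd{\alpha'}$ with $|\alpha'| = |\alpha|-1$, applies the base case to the function $\pd{\alpha'} f \in C_{\rm ap}^\infty(\rr d)$ to get a factor $2\pi i \xi_j$, and invokes the induction hypothesis for $\pd{\alpha'} f$ to pick up $(2\pi i \xi)^{\alpha'}$, giving $(2\pi i \xi)^\alpha \wh f_\xi$ in total.
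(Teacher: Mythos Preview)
Your proposal is correct and follows essentially the same approach as the paper: both arguments reduce to a single partial derivative by induction, apply the fundamental theorem of calculus in one variable to the cube $K_T$, and observe that the resulting boundary terms are $O(T^{d-1})$ and hence vanish after dividing by $T^d$. The only cosmetic difference is that you package the computation as ``mean of a derivative of $g = f e^{-2\pi i \xi \cdot x}$ vanishes,'' whereas the paper integrates by parts directly on the product; the underlying estimate is identical.
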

\begin{proof}
Let $x=(x_1,x')$ where $x_1 \in \ro$, $x' \in \rr {d-1}$ and $K_T' = \{ x' \in \rr {d-1}:\ 0 \leq x_k \leq T, \ k=2,\dots,d \}$. Keeping in mind that $|K_T'|=T^{d-1}$ and $f$ is bounded, integration by parts gives
\begin{equation}\nonumber
\begin{aligned}
\wh{(\partial_1 f)}_\xi = & \lim_{T \rightarrow +\infty} T^{-d} \int_{K_T'} \left( \int_{0}^T \partial_1 f(x_1,x') e^{- 2 \pi i x_1 \xi_1} dx_1 \right) \, e^{- 2 \pi i x' \cdot \xi'} dx' \\
= & \lim_{T \rightarrow +\infty} T^{-d} \left( \int_{K_T'} \left[ f(x_1,x') e^{- 2 \pi i x_1 \xi_1}  \right]_0^T e^{- 2 \pi i x' \cdot \xi'} dx' \right. \\
& \left. + 2 \pi i \xi_1 \int_{K_T}  f(x) e^{- 2 \pi i x \cdot \xi} dx \right) \\
= & \ 2 \pi i \xi_1 \wh f_\xi.
\end{aligned}
\end{equation}
Likewise $\wh{(\partial_j f)}_\xi = 2 \pi i \xi_j \wh f_\xi$ for $1 < j \leq d$ and the result follows by induction.
\end{proof}

The mean value defines a sesquilinear form
\begin{equation}\label{besicovitch1}
(f,g)_B = \mathscr M(f \overline g), \quad f,g \in C_{\rm ap}(\rr d).
\end{equation}
If we set $\| f \|_B = (f,f)_B^{1/2}$ we obtain a norm on $C_{\rm ap}(\rr d)$; in fact, due to Parseval's formula for $C_{\rm ap}(\rr d)$ \cite{Levitan1}
$$
\mathscr M ( |f|^2 ) = \sum_{\xi \in \rr d} | \wh f_\xi |^2,
$$
we have the implication $\| f \|_B = 0$ $\Rightarrow f=0$.
The completion of $TP(\rr d)$, or $C_{\rm ap}(\rr d)$, in the norm $\| \cdot \|_B$ is the
Hilbert space of Besicovitch a.p. functions $B^2(\rr d)$
\cite{Shubin1}. We have the Hilbert space isomorphism $B^2(\rr d) \simeq L^2(\rrb d)$ where $L^2(\rrb d)$ denotes the space of square integrable functions on the Bohr compactification $\rrb d$, equipped with its Haar measure $\mu$, normalized to $\mu(\rrb d)=1$ \cite{Shubin4}.

Inspired by the usual Sobolev space norms for tempered distributions on $\rr d$
\begin{equation}\nonumber
\| f \|_{H_t(\rr d)} = \left(\int_{\rr d} {\eabs \xi}^{2 t} |\widehat
f(\xi)|^2 {\,} d\xi \right)^{1/2}, \quad t \in \ro,
\end{equation}
Shubin \cite{Shubin1} has defined Sobolev--Besicovitch spaces of
a.p. functions $H_t^2(\rr d)$ for $t \in \ro$, as the completion of
$TP(\rr d)$ in the norm corresponding to the inner product
\begin{equation}\nonumber
(f,g)_{H_t^2(\rr d)} = \sum_{\xi \in \rr d}
{\eabs \xi}^{2 t} \wh f_\xi \ \overline{\wh g}_\xi, \quad f,g \in TP(\rr d).
\end{equation}
The spaces $H_t^2(\rr d)$ are nonseparable Hilbert spaces, $H_0^2(\rr d)=B^2(\rr d)$, and one defines
\begin{equation}\nonumber
H_{\infty}^2(\rr d) = \bigcap_{t \in \ro} H_t^2(\rr d), \quad
H_{-\infty}^2(\rr d) = \bigcup_{t \in \ro} H_t^2(\rr d).
\end{equation}
$H_{\infty}^2(\rr d)$ is equipped with the projective limit topology defined by all
inclusions $H_{\infty}^2(\rr d) \subseteq H_t^2(\rr d)$ for $t \in \ro$.
$H_{-\infty}^2(\rr d)$ is equipped with the inductive limit topology \cite{Kothe1,Robertson1,Schaefer1}.
This means that $H_{-\infty}^2(\rr d)$ has the strongest locally convex topology such that the inclusions $H_t^2(\rr d) \subseteq H_{-\infty}^2(\rr d)$, for all $t \in \ro$, are continuous.

We will use the following lemmas which are consequences of general results for projective and inductive limit topologies defined by families of locally convex spaces. For their proofs, see \cite[Theorem II.5.2 and Theorem II.6.1]{Schaefer1}, respectively.

\begin{lem}\label{projectivecont1}
Let $Z$ be a linear space, and let $X_j \subseteq Z$, $j \in J$, be Banach spaces indexed by a partially ordered set $J$.
Define on $X = \bigcap_{j \in J} X_j$ the projective limit topology defined by all inclusions $i_j: X \subseteq X_j$, $j \in J$.
If $Y$ is a topological space and $T$ is a map from $Y$ into $X$, then $T$ is continuous if and only if $i_j \circ T: Y \mapsto X_j$ is continuous for each $j \in J$.
\end{lem}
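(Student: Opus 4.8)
The plan is to derive this from the universal mapping property of initial topologies, of which the projective limit topology is a particular case. First I would recall the relevant description of the topology on $X = \bigcap_{j \in J} X_j$: by definition the projective limit topology is the coarsest locally convex topology rendering each inclusion $i_j : X \to X_j$ continuous, and it is generated by the seminorms $p_j(x) = \| i_j(x) \|_{X_j}$, $j \in J$. Consequently the family of sets $i_j^{-1}(W)$, with $j \in J$ and $W$ open in $X_j$, is a subbasis for the open sets of $X$; equivalently, the finite intersections $\bigcap_{k=1}^n i_{j_k}^{-1}(U_k)$ with $U_k$ a neighborhood of $0$ in $X_{j_k}$ form a basis of neighborhoods of $0$ in $X$.

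The forward implication is immediate: if $T : Y \to X$ is continuous, then each $i_j \circ T$ is continuous because each $i_j$ is continuous by the very construction of the projective limit topology.

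For the converse I would argue directly with the subbasis. Assume $i_j \circ T : Y \to X_j$ is continuous for every $j \in J$. To show $T$ is continuous it suffices to verify that $T^{-1}(U)$ is open in $Y$ for every $U$ in a subbasis of $X$. For a subbasic set $U = i_j^{-1}(W)$ as above,
$$
T^{-1}(U) = T^{-1}\big( i_j^{-1}(W) \big) = (i_j \circ T)^{-1}(W),
$$
which is open in $Y$ since $i_j \circ T$ is continuous. Hence $T$ is continuous. Alternatively, one checks continuity at each point via neighborhood bases: the preimage under $T$ of a basic $0$-neighborhood $\bigcap_{k=1}^n i_{j_k}^{-1}(U_k)$ of a point of $X$ is, after the obvious translation, the finite intersection $\bigcap_{k=1}^n (i_{j_k} \circ T)^{-1}(U_k)$, a neighborhood in $Y$.

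I do not expect any real obstacle here: the only point requiring care is the correct identification of a subbasis of the projective limit topology, after which the argument is a one-line manipulation of preimages. The partial order on $J$ plays no role in this statement, and neither completeness nor local convexity of the spaces $X_j$ is actually used — only that each $X_j$ carries the prescribed (norm) topology with respect to which the initial topology on $X$ is formed; the assertion is precisely the universal property of an initial topology.
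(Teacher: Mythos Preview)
Your argument is correct and is precisely the standard universal-property characterization of an initial (projective limit) topology. The paper does not give its own proof of this lemma; it simply cites \cite[Theorem II.5.2]{Schaefer1}, so your subbasis computation is effectively supplying the details that the paper defers to Schaefer.
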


\begin{lem}\label{inductivecont1}
Let $Z$ be a linear space, and let $X_j \subseteq Z$, $j \in J$, be Banach spaces indexed by a partially ordered set $J$.
Define on $X = \bigcup_{j \in J} X_j$ the inductive limit topology defined by all inclusions $i_j: X_j \subseteq X$, $j \in J$.
If $Y$ is a locally convex space and $T$ is a linear map from $X$ into $Y$, then $T$ is continuous if and only if the restriction $T \circ i_j = T \big|_{X_j} : X_j \mapsto Y$ is continuous for each $j \in J$.
\end{lem}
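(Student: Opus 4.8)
The plan is to prove Lemma~\ref{inductivecont1} directly from the construction of the locally convex inductive (final) topology on $X$. One implication is immediate: if $T\colon X\to Y$ is continuous, then each restriction $T\circ i_j$ is the composition of the continuous inclusion $i_j\colon X_j\to X$ with $T$, hence continuous. So all the content lies in the converse, and that is where I would concentrate.

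For the converse, assume $T\circ i_j\colon X_j\to Y$ is continuous for every $j\in J$. Since $Y$ is locally convex and $T$ is linear, $T$ is continuous if and only if $T^{-1}(V)$ is a neighborhood of $0$ in $X$ for every absolutely convex neighborhood $V$ of $0$ in $Y$ (such $V$ form a base at $0$ in $Y$). So I would fix such a $V$ and put $W:=T^{-1}(V)$. Then $W$ is absolutely convex, because $T$ is linear and $V$ is absolutely convex, and for each $j\in J$ one has
$$
i_j^{-1}(W)=(T\circ i_j)^{-1}(V),
$$
which is a neighborhood of $0$ in the Banach space $X_j$ since $T\circ i_j$ is continuous. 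Then I would invoke the standard description of the neighborhood filter of $0$ for a locally convex final topology (this is the content of the cited \cite{Schaefer1}): an absolutely convex subset $W\subseteq X$ such that $i_j^{-1}(W)$ is a $0$-neighborhood in $X_j$ for every $j$ is automatically absorbing in $X$ (any $x\in X$ lies in some $X_j$, where $i_j^{-1}(W)$ is absorbing) and is a $0$-neighborhood for the inductive limit topology. Applying this to $W=T^{-1}(V)$ shows that $T^{-1}(V)$ is a $0$-neighborhood in $X$, hence $T$ is continuous, which completes the proof.

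I do not expect any genuine obstacle here; the statement is topological bookkeeping and, as the authors note, can simply be quoted from \cite{Schaefer1}. If one instead wanted a self-contained argument, the only point to verify is precisely that description of the $0$-neighborhood base: the family $\mathcal B$ of all absolutely convex $W\subseteq X$ with $i_j^{-1}(W)$ a $0$-neighborhood in each $X_j$ is a filter base of absorbing absolutely convex sets, hence defines a locally convex topology on $X$ rendering every $i_j$ continuous; conversely any locally convex topology with that property has, by the very definition of continuity, all of its absolutely convex $0$-neighborhoods inside $\mathcal B$, so it is coarser. Thus the topology with base $\mathcal B$ at $0$ is the inductive limit topology, and $\mathcal B$ is the required neighborhood base. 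The partial order on $J$ is irrelevant for this criterion; it would matter only for finer questions such as regularity of the limit, which are not needed here.
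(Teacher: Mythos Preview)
Your argument is correct and is exactly the standard proof of the universal property of the locally convex inductive limit topology, which is what the paper cites from \cite[Theorem~II.6.1]{Schaefer1} without reproducing. The paper gives no proof of its own beyond that reference, so your write-up simply unpacks the cited result; there is nothing to compare.
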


Lemma \ref{difffourier1} and the embedding $C_{\rm ap}(\rr d) \subseteq B^2(\rr d)$ imply the embedding $C_{\rm ap}^\infty(\rr d) \subseteq H_{\infty}^2(\rr d)$. But there is no result corresponding to the Sobolev embedding
theorem for the Sobolev--Besicovitch spaces. In fact, $H_{\infty}^2(\rr d)$ is not embedded in $C_{\rm ap}(\rr d)$ \cite{Shubin1}.

The notion of Sobolev spaces of a.p. functions can be generalized to the family of Banach spaces $W_t^p(\rr d)$, $p \in [1,\infty]$ and $t \in \ro$, defined by completion of $TP(\rr d)$ in the norm
\begin{equation}\label{besicovitchsobolev1}
\| f \|_{W_t^p} = \left( \sum_{\xi \in \rr d} {\eabs \xi}^{p t} |\wh f_\xi|^p \right)^{1/p},
\end{equation}
when $p \in [1,\infty)$ (with the standard modification when $p=\infty$),
and
\begin{equation}\label{besicovitchsobolev2}
W_{\infty}^p(\rr d) = \bigcap_{t \in \ro} W_t^p(\rr d),
\quad W_{-\infty}^p(\rr d) = \bigcup_{t \in \ro} W_t^p(\rr d),
\end{equation}
equipped with their projective and inductive limit topologies, respectively.
The Sobolev--Besicovitch spaces $W_t^p(\rr d)$ for $p \in [1,\infty]$ and $t \in \ro$ have been studied in \cite{Dellacqua1,Iannacci1,Shubin1}.
An important special case is $W_0^1(\rr d)$ consisting of functions $f$ with finite Bohr--Fourier coefficient $l^1(\rr d)$ norm \cite{Shubin1,Shubin2,Shubin4}. For $f \in W_0^1(\rr d)$ the Fourier series is absolutely convergent, which shows the embedding
\begin{equation}\label{fourieralgebraembedding1}
W_0^1(\rr d) \subseteq C_{\rm ap}(\rr d).
\end{equation}
The inclusion is strict (cf. \cite[Exercise I.6.6]{Katznelson1}). Moreover, $W_0^1(\rr d)$ is a subalgebra of $C_{\rm ap}(\rr d)$ with respect to pointwise multiplication.

We will adopt a different scale of weights on the Bohr--Fourier coefficients.
Namely we define for $s \geq 1$, $p \in [1,\infty]$ and $\ep \in \ro$ the Banach space $W_{s,\ep}^p(\rr d)$ as the completion of $TP(\rr d)$ with respect to the norm
\begin{equation}\label{besicovitchsobolev3}
\| f \|_{W_{s,\ep}^p} = \left( \sum_{\xi \in \rr d} \exp( - p \ \ep |\xi|^{1/s}) |\wh f_\xi|^p \right)^{1/p}
\end{equation}
(modified as usual for $p=\infty$).
In particular we use
\begin{equation}\nonumber
\| f \|_{W_{s,\ep}^1} = \sum_{\xi \in \rr d} \exp( - \ep |\xi|^{1/s}) |\wh f_\xi|.
\end{equation}
When $\ep>0$ the space $W_{s,\ep}^1(\rr d)$ contains elements such that $\wh f_\xi$ grows very rapidly with $\xi$. We set
\begin{equation}\label{besicovitchsobolev4}
W_{s,0}^p(\rr d) = \bigcap_{\ep>0} W_{s,\ep}^p(\rr d),
\end{equation}
and equip the space $W_{s,0}^p(\rr d)$ with the projective limit topology defined by all inclusions $W_{s,0}^p(\rr d) \subseteq W_{s,\ep}^p(\rr d)$ for $\ep>0$. Finally we define
\begin{equation}\label{besicovitchsobolev5}
W_{s,0-}^p(\rr d) = \bigcup_{\ep<0} W_{s,\ep}^p(\rr d),
\end{equation}
and equip $W_{s,0-}^p(\rr d)$ with the inductive limit topology defined by all inclusions $W_{s,\ep}^p(\rr d) \subseteq W_{s,0-}^p(\rr d)$, $\ep<0$.
The embedding
\begin{equation}\nonumber
W_{s,0-}^p(\rr d) \subseteq W_{s,0}^p(\rr d)
\end{equation}
follows from Lemma \ref{projectivecont1} and Lemma \ref{inductivecont1}.

%%%%%%%%%%%%%%%%%%%%%%%%%%%%%%%%%%%%%%%%%%%%%%%%%%%%%%%%%%%%
\section{Almost periodic Gevrey regularity}\label{sectionap}
%%%%%%%%%%%%%%%%%%%%%%%%%%%%%%%%%%%%%%%%%%%%%%%%%%%%%%%%%%%%

Let $\Omega \subseteq \rr d$ be open and $s \geq 1$.
The Gevrey space of order $s$ is denoted $G^s(\Omega)$ and defined as the space of all $f \in C^\infty(\Omega)$ such that for any compact set $K \subseteq \Omega$ there is a constant $C>0$ such that
\begin{equation}\label{gevreyestimate1}
|\pd \alpha f(x)| \leq C^{1 + |\alpha|} (\alpha!)^s, \quad x \in K, \quad \alpha \in \nn d.
\end{equation}
Then $G^1(\Omega) = A(\Omega)$ is the space of analytic functions in $\Omega$. When $s>1$ the Gevrey space $G^s(\Omega)$ contains functions of compact support;
the set of compactly supported $G^s(\Omega)$ is denoted by $G_c^s(\Omega)$.
Each Gevrey space $G^s(\Omega)$ is an algebra with respect to multiplication, and is invariant under differentiation. For more information about the Gevrey spaces we refer to \cite{Rodino1}.

We shall study functions that are both almost periodic and members of a fixed Gevrey class $G^s(\rr d)$,
$s>1$. A natural question is whether the estimates for the derivatives \eqref{gevreyestimate1} are global when $ f \in (C_{\rm ap} \cap G^s)(\rr d)$. This means that we have
\begin{equation}\label{gevreyestimate1bis}
|\pd \alpha f(x)| \leq B C^{|\alpha|} (\alpha!)^s, \quad x \in \rr d, \quad \alpha \in \nn d,
\end{equation}
for some constants $B,C>0$. The following example shows that this is not the case in general.

\begin{example}\label{counterexample1}
Let $s>1$, $d=1$,
\begin{equation}\label{gevreyexample1}
g_s(x) = \left\{
\begin{array}{ll}
\exp\left( - x^{- \frac1{s-1}} \right), & x>0, \\
0, & x \leq 0,
\end{array}
\right.
\end{equation}
and
$$
\psi(x) = g_s(x) g_s(1-x).
$$
Then $\supp \psi = [0,1]$ and $\psi \in G_c^s(\ro)$ (see for example \cite{Chung1}), and hence there exists $C>0$ such that
$$
\sup_{x \in \ro} |\pd j \psi(x)| \leq C^j (j!)^s, \quad j \geq 0.
$$
Denote
\begin{equation}\nonumber
C_0 = \sup_{x \in \ro, \ j \geq 1} \left( |\pd j \psi(x)| (j!)^{-s} \right)^{1/j} >0
\end{equation}
so that
\begin{equation}\label{smallestconstant1}
\sup_{x \in \ro, \ j \geq 1} |\pd j \psi(x)| (j!)^{-s} C^{-j} \leq 1 \quad \Longleftrightarrow \quad C \geq C_0.
\end{equation}
Next we define $\psi_n(x)=\psi(nx)$ for integer $n \geq 1$, so that $\supp \psi_n \subseteq [0,1/n]$, and we put
\begin{equation}\nonumber
\varphi_n(x) = \sum_{k \in \mathbb Z} \psi_n(x-2^n(1 + 2 k)).
\end{equation}
Then $\varphi_n$ is $2^{n+1}$-periodic and $\varphi_n \in G^s(\ro)$.
Moreover, $\supp \varphi_n \cap \supp \varphi_m = \emptyset$ for $m \neq n$.
We define
\begin{equation}\label{counterexampledefinition1}
f(x) = \sum_{n=1}^\infty n^{-1/4} \varphi_n(x).
\end{equation}
Let $x \in \ro$ be fixed and let $N \geq 1$.
Since $\{ \varphi_n \}_{n \geq 1}$ have pairwise disjoint support, we have
$$
\left| \sum_{n>N} n^{-1/4} \varphi_n(x) \right| = \sum_{n>N} n^{-1/4} \varphi_n(x) = n_0^{-1/4} \varphi_{n_0}(x)
$$
for some $n_0 > N$, depending on $x$.
Therefore the series \eqref{counterexampledefinition1} converges uniformly, and $f \in C_{\rm ap}(\ro)$ since $\varphi_n \in C_{\rm ap}(\ro)$ for all $n \geq 1$.
The pairwise disjoint support of $\{ \varphi_n \}_{n \geq 1}$ gives furthermore $f \in G^s(\ro)$. Thus $f \in (C_{\rm ap} \cap G^s)(\ro)$.

Let $C>0$, let $n$ be an arbitrary integer such that $n>(C/C_0)^2$ and let $0 \leq x \leq 1/n$. Then $\varphi_m(2^n + x)=0$ for $m \neq n$, and $\varphi_n(2^n + x) = \psi_n(x)$,
which gives
\begin{equation}\nonumber
\begin{aligned}
C^{-j} (j!)^{-s} |\pd j f(2^n + x)| & = n^{-1/4} C^{-j} (j!)^{-s} |(\pd j \psi_n) (x)| \\
& = n^{-1/4} \left( C n^{-1} \right)^{-j} (j!)^{-s} |(\pd j \psi) (n x)|.
\end{aligned}
\end{equation}
We have for $j \geq 1$
$$
n^{-1/4} \left( C n^{-1} \right)^{-j} = n^{j/2-1/4} \left( C n^{-1/2} \right)^{-j} \geq n^{1/4} \left( C n^{-1/2} \right)^{-j},
$$
which implies, in view of \eqref{smallestconstant1}, since $C n^{-1/2}<C_0$,
\begin{equation}\nonumber
\begin{aligned}
& \sup_{0 \leq x \leq 1/n, \ j \geq 1} C^{-j} (j!)^{-s} |\pd j f(2^n + x)| \\
& \geq n^{1/4} \sup_{0 \leq x \leq 1/n, \ j \geq 1} \left( C n^{-1/2} \right)^{-j} (j!)^{-s} |(\pd j \psi) (n x)| \\
& > n^{1/4}.
\end{aligned}
\end{equation}
Since $n$ can be arbitrarily large we have
\begin{equation}\nonumber
\sup_{x \in \ro, \ j \geq 1} C^{-j} (j!)^{-s} |\pd j f(x)| = \infty
\end{equation}
for any $C>0$, which means that
\begin{equation}\nonumber
\sup_{x \in \ro} |\pd j f(x)| \leq B C^{j} (j!)^{s}, \quad j \geq 0,
\end{equation}
cannot hold for any $B,C>0$.
\hspace{68mm} $\Box$
\end{example}

In this paper we will restrict our attention to functions $f \in (C_{\rm ap} \cap G^s)(\rr d)$ such that the uniform estimate \begin{equation}\label{gevreyestimate2}
|\pd \alpha f(x)| \leq C^{1+|\alpha|} (\alpha!)^s, \quad x \in \rr d, \quad \alpha \in \nn d,
\end{equation}
holds for some $C>0$. Example \ref{counterexample1} shows that this is a strict subset of $(C_{\rm ap} \cap G^s)(\rr d)$.

This restriction is consistent with the definition of analytic almost periodic functions \cite{Corduneanu1}. In fact, such a function $f$ is required to be analytic in a strip $\rr d \times i I \subseteq \cc d$ where $I = I_1 \times \cdots \times I_d \subseteq \rr d$ and $I_j=[a_j,b_j]$ with $a_j<0<b_j$ for each $1 \leq j \leq d$. The function $f$ is almost periodic if for any $\ep>0$ there exists a compact set $K_\ep \subseteq \rr d$
such that for any $x \in \rr d$ there is an $u \in x + K_\ep$ such that
$$
|f(z+u) - f(z)| < \ep, \quad z \in \rr d \times i I.
$$
Cauchy's integral formula, see for example \cite{Krantz1}, gives the global estimate
\begin{equation}\nonumber
\sup_{x \in \rr d} |\pd \alpha f(x)| \leq C^{1+|\alpha|} \alpha!, \quad \alpha \in \nn d, \quad x \in \rr d,
\end{equation}
for some $C>0$.

We denote the space of $f \in (C_{\rm ap} \cap G^s)(\rr d)$ that satisfies \eqref{gevreyestimate2} for $C>0$ by $G_{{\rm ap},C}^s(\rr d)$, for $s \geq 1$. It is equipped with the norm
\begin{equation}\label{banachnorm1}
\| f \|_{s,C} = \sup_{\alpha \in \nn d}  C^{-|\alpha|} (\alpha!)^{-s} \sup_{x \in \rr d} |\pd \alpha f(x)|.
\end{equation}
The space of functions such that $\| f \|_{s,C}<\infty$ is a Banach space.
Suppose $C>0$ and $(f_n) \subseteq G_{{\rm ap},C}^s(\rr d)$ is a Cauchy sequence with pointwise limit $f$.
Since $\| \pd \alpha (f - f_n) \|_{L^\infty} \leq C^{|\alpha|} (\alpha!)^{s} \| f - f_n \|_{s,C}$ for each $\alpha \in \nn d$, and the $C_{\rm ap}(\rr d)$ property is preserved under uniform convergence \cite{Levitan1}, the limit $f$ belongs to $C_{\rm ap}^\infty (\rr d)$. Hence $G_{{\rm ap},C}^s(\rr d)$ is a Banach space. Clearly we have the embedding $G_{{\rm ap},C_1}^s(\rr d) \subseteq G_{{\rm ap},C_2}^s(\rr d)$ for $C_2 \geq C_1$.

For any $\xi \in \rr d$ and $e_\xi (x)=e^{2 \pi i \xi \cdot x}$,
the inequality $t^n \leq n! e^t$ for $t \geq 0$ and $n \in \no$
yields for any $s \geq 1$ and any $C>0$
\begin{equation}\label{exponentialgevreybound0}
\begin{aligned}
\left| \pd \alpha e_\xi (x) \right| \leq | 2 \pi \xi|^{|\alpha|}
& = C^{|\alpha|} (\alpha!)^s \left( \frac{\left| 2 \pi \xi C^{-1} \right|^{|\alpha|/s} }{\alpha!} \right)^s \\
& \leq C^{|\alpha|} (\alpha!)^s \exp \left( s d \left| 2 \pi \xi C^{-1} \right|^{1/s} \right).
\end{aligned}
\end{equation}
Hence $TP(\rr d) \subseteq G_{{\rm ap}, C}^s(\rr d)$ for any $C>0$ and any $s \geq 1$, and moreover we have
$\| e_\xi \|_{s,C} \leq \exp \left( s d (2 \pi)^{1/s} C^{-1/s} |\xi|^{1/s} \right)$ for any $C>0$.

\begin{rem}
For $C_2 \geq C_1$, the embedding $G_{{\rm ap},C_1}^s(\rr d) \subseteq G_{{\rm ap},C_2}^s(\rr d)$ is not compact.
In fact, let $(\xi_n)_0^\infty \subseteq \rr d$ be a sequence of distinct frequency vectors bounded as $|\xi_n| \leq C_1$ for all $n \geq 0$. It follows from \eqref{exponentialgevreybound0} that $\sup_{n \geq 0} \| e_{\xi_n} \|_{s,C_1} < \infty$.
Suppose $(e_{\xi_n})_{n=0}^\infty$ has a subsequence $(e_{\xi_{n_k}})_{k=0}^\infty$ that converges in $G_{{\rm ap},C_2}^s(\rr d)$. For $k,j$ sufficiently large and $k \neq j$ we then obtain the contradiction
$$
1 > \| e_{\xi_{n_k}} - e_{\xi_{n_j}} \|_{s,C_2} \geq \sup_{x \in \rr d} | e_{\xi_{n_k}}(x) - e_{\xi_{n_j}}(x) | = 2,
$$
which proves that no subsequence of $(\xi_n)_0^\infty \subseteq \rr d$ converges in $G_{{\rm ap},C_2}^s(\rr d)$.
It follows that the embedding is not compact.
\end{rem}

We set
\begin{equation}\label{spaceunion1}
G_{\rm ap}^s(\rr d) = \bigcup_{C>0} G_{{\rm ap},C}^s(\rr d)
\end{equation}
and equip $G_{\rm ap}^s(\rr d)$ with the inductive limit topology defined by the inclusions $G_{{\rm ap},C}^s \subseteq G_{\rm ap}^s$ for all $C>0$.
However, the inductive limit topology on $G_{\rm ap}^s (\rr d)$ is not a \emph{strict} inductive limit topology \cite{Kothe1,Schaefer1,Reed1,Robertson1}. Therefore we do not know whether $G_{\rm ap}^s$ is complete, or even Hausdorff, for the inductive limit topology, nor do we know whether sequential convergence in $G_{\rm ap}^s$ implies sequential convergence in $G_{{\rm ap},C}^s$ for some $C>0$.

The spaces $G_{\rm ap}^s(\rr d)$ are embedded as
$$
G_{\rm ap}^1(\rr d) \subseteq G_{\rm ap}^s(\rr d) \subseteq G_{\rm ap}^t(\rr d), \quad s \leq t.
$$
The spaces $\{ G_{\rm ap}^s(\rr d) \}_{s \geq 1}$ are pairwise distinct, as can be seen by taking any element $f \in G_c^t(\rr d) \setminus G_c^s(\rr d)$ for $s<t$ (cf. \cite{Chung1}) and performing a periodization.

For any $s \geq 1$, and $C>0$ and any $m \in \no$ we have the embedding
$$
G_{{\rm ap},C}^s (\rr d) \subseteq C_{\rm ap}^m (\rr d).
$$
By Lemma \ref{projectivecont1} and Lemma \ref{inductivecont1} we thus have the embedding $G_{\rm ap}^s \subseteq C_{\rm ap}^\infty$.

Let $f, g \in G_{\rm ap}^s(\rr d)$, which means that $f \in G_{{\rm ap},C_f}^s(\rr d)$ and $g \in G_{{\rm ap},C_g}^s(\rr d)$ for some $C_f,C_g>0$. Leibniz' formula gives
\begin{equation}\nonumber
\begin{aligned}
|\pd \alpha (fg)(x)| & \leq \sum_{\beta \leq \alpha} \binom \alpha \beta |\pd \beta f(x)| |\partial^{\alpha-\beta} g(x)| \\
& \leq \sum_{\beta \leq \alpha} \frac{(\alpha!)^s}{((\alpha-\beta)! \beta!)^s} C_f^{1+|\beta|} C_g^{1+|\alpha-\beta|} (\beta! (\alpha-\beta)!)^{s} \\
& \leq C^{2(1+|\alpha|)} (\alpha!)^s
\end{aligned}
\end{equation}
where $C=\max(2,C_f,C_g)$. The fact that $C_{\rm ap}(\rr d)$ is an algebra under multiplication (see e.g. \cite{Levitan1}) hence shows that $G_{\rm ap}^s(\rr d)$ is an algebra under multiplication. Moreover, it can be shown that $G_{\rm ap}^s(\rr d)$ is an algebra under mean value convolution, defined by
$$
f *_{\small \mathscr M} g(x) = \mathscr M_y( f(x-y) g(y) ).
$$
For this operation we also have the Bohr--Fourier coefficient relation
$$
\wh { (f *_{\small \mathscr M} g ) }_\xi = \wh f_\xi \ \wh g_\xi, \quad \xi \in \rr d.
$$
Concerning the Bohr--Fourier transform acting on $G_{\rm ap}^s(\rr d)$ we have the following result.

\begin{lem}\label{fouriergevrey1}
If $s \geq 1$, $C>0$, $f \in G_{{\rm ap},C}^s(\rr d)$ and
\begin{equation}\label{epsilondefinition1}
\ep = \frac{s}{e} \left( \frac{2 \pi}{C d^{1/2}} \right)^{1/s}
\end{equation}
then there exists $C_1>0$ such that
\begin{equation}\nonumber
|\wh f_\xi| \leq C_1 \| f \|_{s,C}  \exp(- \ep |\xi|^{1/s}), \quad \xi \in \rr d.
\end{equation}
\end{lem}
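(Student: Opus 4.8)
The estimate to prove is a quantitative decay bound on the Bohr--Fourier coefficients $\wh f_\xi$ for $f \in G_{{\rm ap},C}^s(\rr d)$. The natural starting point is Lemma \ref{difffourier1}, which gives $\wh{(\pd \alpha f)}_\xi = (2\pi i \xi)^\alpha \wh f_\xi$ for every multi-index $\alpha$. Since $\|\pd\alpha f\|_{L^\infty} \leq \|f\|_{s,C}\, C^{|\alpha|}(\alpha!)^s$ and the mean value functional satisfies $|\mathscr M(g)| \leq \|g\|_{L^\infty}$, applying the modulus to $\wh{(\pd\alpha f)}_\xi = \mathscr M_x(\pd\alpha f(x) e^{-2\pi i \xi\cdot x})$ yields
$$
|2\pi \xi|^{|\alpha|}\, |\wh f_\xi| \leq \|f\|_{s,C}\, C^{|\alpha|}(\alpha!)^s, \qquad \alpha \in \nn d.
$$
Thus $|\wh f_\xi| \leq \|f\|_{s,C}\, \inf_{\alpha \in \nn d} C^{|\alpha|}(\alpha!)^s\, |2\pi\xi|^{-|\alpha|}$, and the whole task reduces to optimizing the right-hand side over $\alpha$.

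The plan is to reduce the multi-index optimization to a one-dimensional one: for a fixed $|\alpha|=k$ the factor $(\alpha!)^s$ is minimized by spreading the mass as evenly as possible among the $d$ components, and using $\alpha! \leq (k/d)!^d$-type bounds (or more crudely $\alpha! \leq k!$ combined with Stirling, keeping track of the $d^{1/2}$ that appears in \eqref{epsilondefinition1}) one gets a bound of the form $C^k (\alpha!)^s |2\pi\xi|^{-k} \lesssim \left( c\, k^s / |\xi| \right)^{k}$ for an explicit constant $c$ involving $C$ and $d$. Then I would choose $k = k(\xi)$ to (approximately) minimize $(c k^s/|\xi|)^k$; taking a logarithm, one minimizes $k\log(ck^s) - k\log|\xi| = sk\log k + k\log c - k\log|\xi|$, whose derivative in $k$ vanishes near $k \approx |\xi|^{1/s} e^{-1} c^{-1/s}$, giving the optimal value $\exp(-s k) = \exp(-\frac{s}{e} c^{-1/s} |\xi|^{1/s})$. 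Matching $c$ with the constant $(2\pi/(Cd^{1/2}))$ reproduces exactly the $\ep$ in \eqref{epsilondefinition1}. Since $k$ must be a nonnegative integer rather than a real number, I would take $k = \lfloor k(\xi)\rfloor$ (handling small $|\xi|$, where $k(\xi)<1$, by the trivial bound $|\wh f_\xi|\leq \|f\|_{L^\infty}\leq \|f\|_{s,C}$ separately); the rounding and the crude multi-index-to-scalar passage are absorbed into the constant $C_1$.

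The main obstacle is purely bookkeeping: getting the constant in the exponent to come out as precisely $\frac{s}{e}(2\pi/(Cd^{1/2}))^{1/s}$ rather than merely as some positive constant. This requires being careful with (i) the passage from $\alpha! $ to a power of a factorial of $|\alpha|/d$, which is where the $d^{1/2}$ (via Stirling's $\sqrt{2\pi k}$-type correction, or via $\binom{k}{\alpha}\leq d^k$) enters, and (ii) the Stirling estimate $k! \geq (k/e)^k$ used in the optimization, which is responsible for the factor $e$ in the denominator. A clean route is to first establish, for all $k\geq 1$,
$$
\inf_{|\alpha|=k} (\alpha!)^s |2\pi\xi C^{-1}|^{-k} \leq \left( \frac{ (k/d)!^{\,d/k} \, C }{2\pi|\xi|} \right)^{sk/s}\!\!,
$$
cf. the computation already used in \eqref{exponentialgevreybound0}, and then apply the scalar optimization $\inf_{k\geq 1}(a k^s)^k = \exp(-s a^{-1/s}/e)$ valid up to the integer-rounding correction. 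Everything else — existence of the mean value, the identity of Lemma \ref{difffourier1}, the $L^\infty$ bound on $\mathscr M$ — is already available from the preceding material, so the proof is short modulo this explicit optimization.
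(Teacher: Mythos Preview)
There is a genuine gap at the very first step. From Lemma~\ref{difffourier1} you obtain
\[
|(2\pi i\xi)^\alpha|\,|\wh f_\xi| \leq \|f\|_{s,C}\,C^{|\alpha|}(\alpha!)^s,
\]
and $|(2\pi i\xi)^\alpha|=(2\pi)^{|\alpha|}|\xi^\alpha|$, \emph{not} $(2\pi|\xi|)^{|\alpha|}$. In general $|\xi^\alpha|\leq |\xi|^{|\alpha|}$, so your displayed inequality $|2\pi\xi|^{|\alpha|}|\wh f_\xi|\leq \|f\|_{s,C}C^{|\alpha|}(\alpha!)^s$ does not follow. Your subsequent plan --- fix $|\alpha|=k$ and minimize $(\alpha!)^s$ by spreading $\alpha$ evenly over the coordinates --- is then ill-founded: the $\alpha$ that minimize $\alpha!$ (the spread-out ones) are precisely those for which $|\xi^\alpha|$ can be very small relative to $|\xi|^k$, so the two optimizations pull in opposite directions and cannot be decoupled. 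In particular your proposed sources for the factor $d^{1/2}$ (Stirling corrections, or the multinomial bound $\binom{k}{\alpha}\leq d^k$) are not where it actually arises.

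The paper's route fixes this in one line: use
\[
|\xi|^k \leq d^{k/2}\max_{|\alpha|=k}|\xi^\alpha|,
\]
which comes from $|\xi|\leq d^{1/2}\max_j|\xi_j|$. For the maximizing $\alpha$ (which has the form $k e_j$) one has $\alpha!=k!\leq k^k$, giving
\[
|\xi|^k|\wh f_\xi|\leq \|f\|_{s,C}\Bigl(\tfrac{Cd^{1/2}}{2\pi}\Bigr)^k k^{ks},\qquad k\geq 1.
\]
This is exactly the point where the $d^{1/2}$ enters the constant. From here your one-variable optimization in $k$ is correct and is essentially the content of the Gelfand--Shilov estimate the paper cites: minimizing $k\mapsto (bk^s/|\xi|)^k$ with $b=Cd^{1/2}/(2\pi)$ gives the optimal $k\approx e^{-1}(|\xi|/b)^{1/s}$ and the value $\exp\bigl(-\tfrac{s}{e}(2\pi/(Cd^{1/2}))^{1/s}|\xi|^{1/s}\bigr)$, with integer rounding and the small-$|\xi|$ regime absorbed into $C_1$.
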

\begin{proof}
Lemma \ref{difffourier1} yields for any $\alpha \in \nn d$
\begin{equation}\nonumber
|(2 \pi i \xi)^\alpha \wh f_\xi| = | \wh{(\pd \alpha f})_\xi|
\leq \| \pd \alpha f \|_{L^\infty}
\leq \| f \|_{s,C} \ C^{|\alpha|}(\alpha!)^s.
\end{equation}
Using
$$
|\xi|^k \leq d^{k/2} \max_{|\alpha|=k} |\xi^\alpha|,
$$
we obtain the estimate
\begin{equation}\nonumber
|\xi|^k |\wh f_\xi|
\leq \| f \|_{s,C} \left( \frac{C d^{1/2}}{2 \pi} \right)^k k^{k s}, \quad k=1,2,\dots.
\end{equation}
Finally an application of \cite[Chapter IV.2.1]{Gelfand1} yields the result.
\end{proof}

For any $t \in \ro$ and any $\ep>0$ there exists $C_{\ep,t}>0$ such that
\begin{equation}\label{polyexpinequality1}
\exp(-\ep |\xi|^{1/s}) \leq C_{\ep,t} \eabs{\xi}^t, \quad \xi \in \rr d.
\end{equation}
Therefore we have the embedding $W_t^p \subseteq W_{s,\ep}^p$ for all $t \in \ro$, $p \in [1,\infty]$, which
in view of Lemma \ref{projectivecont1} and Lemma \ref{inductivecont1}
implies the embedding
\begin{equation}\label{besicovitchsobolevembedding1}
W_{-\infty}^p (\rr d) \subseteq W_{s,0}^p (\rr d), \quad p \in [1,\infty].
\end{equation}
This indicates that $W_{s,0}^1$ is a rather large space.
However, although we have the embeddings
$$
G_{\rm ap}^s \subseteq C_{\rm ap}^\infty \subseteq W_{\infty}^2 \subseteq W_{-\infty}^2 \subseteq W_{s,0}^2
$$
we cannot prove the inclusion $G_{\rm ap}^s(\rr d) \subseteq W_{s,0}^1(\rr d)$ in general.

Nevertheless, under certain assumptions on the set of frequencies $\La = \{ \xi \in \rr d: \ \wh f_\xi \neq 0 \}$ such an inclusion is possible. Let $C>0$ and let $\La \subseteq \rr d$ be an arbitrary countable set. We define the Banach space
$$
G_{{\rm ap},C,\La}^s(\rr d) = \left\{ f \in G_{{\rm ap},C}^s(\rr d): \ \wh f_\xi = 0, \ \xi \notin \La \right\},
$$
and the inductive limit topological vector space
$$
G_{{\rm ap},\La}^s(\rr d) = \bigcup_{C>0} G_{{\rm ap},C,\La}^s(\rr d).
$$
Likewise we define $W_{t,\La}^p$ for $t \in \ro$ as in \eqref{besicovitchsobolev1} with the restriction that
$\wh f_\xi=0$ for $\xi \notin \La$,
and similarly we define $W_{\infty,\La}^p$, $W_{-\infty,\La}^p$, $W_{s,\ep,\La}^p$ for $\ep \in \ro$, $W_{s,0,\La}^p$ and $W_{s,0-,\La}^p$. See \eqref{besicovitchsobolev2}, \eqref{besicovitchsobolev3}, \eqref{besicovitchsobolev4} and \eqref{besicovitchsobolev5}.

Suppose that
\begin{equation}\label{frequencyrequirement1}
\sum_{\xi \in \La} \exp(- \ep |\xi|^{1/s}) < \infty \quad \forall \ep>0.
\end{equation}
This holds for example if $\La=\zz d$, or more generally if $\sum_{\xi \in \La} \eabs{\xi}^{-t} < \infty$ for some $t>0$.

\begin{prop}\label{frequencyseparation1}
If \eqref{frequencyrequirement1} holds then we have the embeddings
\begin{equation}\label{frequencysepinclusion1}
G_{{\rm ap},\La}^s (\rr d) \subseteq W_{\infty,\La}^p (\rr d) \subseteq W_{-\infty,\La}^p (\rr d) \subseteq W_{s,0,\La}^p (\rr d), \quad p \in [1,\infty),
\end{equation}
and the homeomorphism
\begin{equation}\label{frequencysepequality1}
G_{{\rm ap},\La}^s (\rr d) = W_{s,0-,\La}^p(\rr d), \quad p \in [1,\infty).
\end{equation}
\end{prop}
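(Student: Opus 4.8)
The plan is to dispose of the two rightmost inclusions in \eqref{frequencysepinclusion1} using facts already on record, and then to deduce both the first inclusion in \eqref{frequencysepinclusion1} and the homeomorphism \eqref{frequencysepequality1} from Lemma~\ref{fouriergevrey1} and the summability hypothesis \eqref{frequencyrequirement1}. A preliminary remark: \eqref{frequencyrequirement1} forces the set $\La \cap \{ \xi \in \rr d : |\xi| \leq N \}$ to be finite for every $N>0$, for otherwise the left-hand side of \eqref{frequencyrequirement1} would contain infinitely many terms bounded below by $\exp(-\ep N^{1/s})>0$ and hence diverge. Consequently, for any $h \in C_{\rm ap}(\rr d)$ with frequencies in $\La$ and absolutely summable Bohr--Fourier coefficients, the spherical truncations $\sum_{\xi \in \La, \, |\xi| \leq N} \wh h_\xi \, e^{2 \pi i \xi \cdot x}$ are genuine trigonometric polynomials converging uniformly to $h$; they play the role of Bochner--Fej\'er polynomials below. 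Finally, $W_{\infty,\La}^p \subseteq W_{-\infty,\La}^p$ is immediate from the definitions and is an embedding by Lemma~\ref{projectivecont1} and Lemma~\ref{inductivecont1}, and $W_{-\infty,\La}^p \subseteq W_{s,0,\La}^p$ is the $\La$-restricted form of \eqref{besicovitchsobolevembedding1}, proved via \eqref{polyexpinequality1} exactly as there.

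For the first inclusion in \eqref{frequencysepinclusion1}, fix $C>0$ and $f \in G_{{\rm ap},C,\La}^s(\rr d)$. By Lemma~\ref{fouriergevrey1} there are $\ep_0 = \ep_0(C)>0$ and $C_1>0$ with $|\wh f_\xi| \leq C_1 \| f \|_{s,C} \exp(-\ep_0 |\xi|^{1/s})$, while $\wh f_\xi = 0$ for $\xi \notin \La$. Given $t \in \ro$, choose $\ep' \in (0,\ep_0)$; then $\eabs{\xi}^{pt} \exp(-p\ep_0 |\xi|^{1/s}) \leq C_{t,\ep'} \exp(-p\ep' |\xi|^{1/s})$, so summing over $\La$ and using \eqref{frequencyrequirement1} gives $\| f \|_{W_t^p} \leq c_{t,C} \| f \|_{s,C}$. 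Together with convergence of the spherical truncations in $W_t^p$-norm (a tail of a convergent series) this shows $f \in W_{t,\La}^p$, hence $f \in W_{\infty,\La}^p$; and since the norm bound holds for every $t$, the inclusion $G_{{\rm ap},C,\La}^s \to W_{t,\La}^p$ is continuous for each $t$, so Lemma~\ref{projectivecont1} and Lemma~\ref{inductivecont1} yield continuity of $G_{{\rm ap},\La}^s \subseteq W_{\infty,\La}^p$. This proves \eqref{frequencysepinclusion1}.

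For \eqref{frequencysepequality1} I would show the natural inclusions go continuously in both directions; being inverse to one another on the common space of functions, they then form a homeomorphism. For the inclusion $G_{{\rm ap},\La}^s \subseteq W_{s,0-,\La}^p$: given $f \in G_{{\rm ap},C,\La}^s$ and any $\eta \in (0,\ep_0)$ (with $\ep_0 = \ep_0(C)$ from Lemma~\ref{fouriergevrey1}), that lemma gives $\exp(p\eta |\xi|^{1/s}) |\wh f_\xi|^p \leq (C_1 \| f \|_{s,C})^p \exp(p(\eta-\ep_0) |\xi|^{1/s})$, which is summable over $\La$ by \eqref{frequencyrequirement1}; with truncation this yields $f \in W_{s,-\eta,\La}^p$ and $\| f \|_{W_{s,-\eta}^p} \leq c' \| f \|_{s,C}$, so composing with $W_{s,-\eta,\La}^p \subseteq W_{s,0-,\La}^p$ and applying Lemma~\ref{inductivecont1} produces the continuous inclusion.

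For the reverse inclusion $W_{s,0-,\La}^p \subseteq G_{{\rm ap},\La}^s$: let $\eta>0$ and $f \in W_{s,-\eta,\La}^p$, so $|\wh f_\xi| \leq \| f \|_{W_{s,-\eta}^p} \exp(-\eta |\xi|^{1/s})$. Fix $\eta' \in (0,\eta)$, put $S = \sum_{\xi \in \La} \exp(-(\eta-\eta') |\xi|^{1/s}) < \infty$ (by \eqref{frequencyrequirement1}), split $\exp(-\eta |\xi|^{1/s}) = \exp(-\eta' |\xi|^{1/s}) \exp(-(\eta-\eta') |\xi|^{1/s})$, and use the elementary maximum $\sup_{r \geq 0} r^{s|\alpha|} e^{-\eta' r} = (s|\alpha|/(e\eta'))^{s|\alpha|} \leq (sd/\eta')^{s|\alpha|} (\alpha!)^s$ (from $(k/e)^k \leq k!$ and $|\alpha|! \leq d^{|\alpha|} \alpha!$, with $r = |\xi|^{1/s}$) to obtain, for every $\alpha$,
\[
\sum_{\xi \in \La} |2\pi \xi|^{|\alpha|} |\wh f_\xi| \leq S \, \| f \|_{W_{s,-\eta}^p} \, (2\pi)^{|\alpha|} (sd/\eta')^{s|\alpha|} (\alpha!)^s .
\]
Taking $|\alpha|=0$ shows the coefficients are absolutely summable, so $g(x) = \sum_{\xi \in \La} \wh f_\xi \, e^{2 \pi i \xi \cdot x}$ and its termwise derivatives of all orders converge absolutely and uniformly; hence $g \in C_{\rm ap}^\infty(\rr d)$ with $\wh g_\xi = \wh f_\xi$, and the displayed bound gives $|\pd \alpha g(x)| \leq B C^{|\alpha|} (\alpha!)^s$, so $g \in G_{{\rm ap},C_1,\La}^s$ with $\| g \|_{s,C_1} \leq c'' \| f \|_{W_{s,-\eta}^p}$ for a suitable $C_1 = C_1(\eta)$. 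Since an element of $W_{s,-\eta,\La}^p$ is determined by its Bohr--Fourier coefficients (the completion being a weighted $\ell^p$-space over $\La$), $f$ coincides with $g$; thus $W_{s,-\eta,\La}^p \subseteq G_{{\rm ap},\La}^s$ continuously, and Lemma~\ref{inductivecont1} upgrades this to the continuous inclusion $W_{s,0-,\La}^p \subseteq G_{{\rm ap},\La}^s$. This reverse inclusion is the main obstacle: one must reconstruct a bona fide almost periodic function from a rapidly decreasing coefficient sequence and, above all, produce the \emph{global} (uniform in $x$) Gevrey bound, for which both the optimization of $r^{s|\alpha|} e^{-\eta' r}$ and the hypothesis \eqref{frequencyrequirement1} are essential — without \eqref{frequencyrequirement1} the series for $\pd \alpha g$ need not converge, and the truncations used throughout would not be finite trigonometric polynomials.
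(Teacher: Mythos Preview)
Your argument is correct and follows the same overall strategy as the paper: the chain \eqref{frequencysepinclusion1} and the forward inclusion in \eqref{frequencysepequality1} are obtained exactly as in the paper, via Lemma~\ref{fouriergevrey1}, the summability hypothesis \eqref{frequencyrequirement1}, and the continuity Lemmas~\ref{projectivecont1}--\ref{inductivecont1}.

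The one tactical difference is in the reverse inclusion $W_{s,0-,\La}^p \subseteq G_{{\rm ap},\La}^s$. The paper proves the norm bound $\| f \|_{s,C} \leq C_1 \| f \|_{W_{s,-\ep,\La}^p}$ first for $f \in TP(\La)$, using H\"older's inequality together with \eqref{frequencyrequirement1} to pass from the $\ell^1$-type sum to the $\ell^p$-norm, and then extends by density of $TP(\La)$ in $W_{s,-\ep,\La}^p$ (implicitly using that $G_{{\rm ap},C,\La}^s$ is a Banach space). You instead take an arbitrary $f \in W_{s,-\eta,\La}^p$, use the trivial bound $|\wh f_\xi| \leq \| f \|_{W_{s,-\eta}^p} \exp(-\eta |\xi|^{1/s})$ (which bypasses H\"older), reconstruct the function $g$ from its coefficients, and verify directly that $g \in G_{{\rm ap},C_1,\La}^s$ with the needed norm estimate. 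Your version is slightly more constructive and makes explicit that the abstract element of the completion is represented by an honest almost periodic function; the paper's density argument is more concise but leaves that identification implicit. Both routes rely on \eqref{frequencyrequirement1} in the same essential way.
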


\begin{proof}
Note in particular that \eqref{frequencysepequality1} implies that $W_{s,0-,\La}^p(\rr d)$ does not depend on $p \in [1,\infty)$.

First we prove \eqref{frequencysepinclusion1}.
Let $p \in [1,\infty)$, $C>0$, $f \in G_{{\rm ap},C,\La}^s (\rr d)$ and $t \in \ro$. From Lemma \ref{fouriergevrey1} and \eqref{polyexpinequality1} we obtain for some $C_1,\ep>0$
\begin{equation}\nonumber
\begin{aligned}
\left( \sum_{\xi \in \La} \eabs{\xi}^{p t} | \wh f_\xi|^p \right)^{1/p}
& \leq C_1 \| f \|_{s,C} \left( \sum_{\xi \in \La} \eabs{\xi}^{p t} \exp(-2 \ep p |\xi|^{1/s}) \right)^{1/p} \\
& \leq C_1 C_{\ep,t} \| f \|_{s,C} \left( \sum_{\xi \in \La} \exp(-\ep p|\xi|^{1/s}) \right)^{1/p}.
\end{aligned}
\end{equation}
Thus \eqref{frequencyrequirement1} implies the embedding $G_{{\rm ap},C,\La}^s \subseteq W_{t,\La}^p$ for any $t \in \ro$ and $p \in [1,\infty)$. Lemma \ref{projectivecont1} and Lemma \ref{inductivecont1} give the first embedding of \eqref{frequencysepinclusion1}. The second embedding is evident, and
the final embedding follows from \eqref{besicovitchsobolevembedding1}.

Next we prove \eqref{frequencysepequality1}. Let $C>0$ and $f \in G_{{\rm ap},C,\La}^s(\rr d)$.
Define $\ep>0$ by \eqref{epsilondefinition1} and let $p \in [1,\infty)$.
Lemma \ref{fouriergevrey1} and \eqref{frequencyrequirement1} yield
\begin{equation}\nonumber
\begin{aligned}
\| f \|_{W_{s,-\ep/2,\La}^p} & = \left( \sum_{\xi \in \La} \exp( p \ep/2 \ |\xi|^{1/s}) |\wh f_\xi|^p \right)^{1/p} \\
& \leq C_1 \| f \|_{s,C} \left( \sum_{\xi \in \La} \exp(-p \ep/2 \ |\xi|^{1/s})\right)^{1/p}
\leq C_2 \| f \|_{s,C}
\end{aligned}
\end{equation}
for some $C_1,C_2>0$. Thus we have the embedding
$$
G_{{\rm ap},C,\La}^s(\rr d) \subseteq W_{s,0-,\La}^p (\rr d).
$$
Since $C>0$ is arbitrary, Lemma \ref{inductivecont1} proves the embedding ``$\subseteq$'' in \eqref{frequencysepequality1}. For the opposite embedding, let $f \in TP(\La)$, that is,
$$
f(x) = \sum_{\xi \in \La} \wh f_\xi \ e^{2 \pi i x \cdot \xi}
$$
with a finite sum. Let $\ep>0$ and pick $C \geq 2 \pi (2 d s/\ep)^s$.
Writing $\theta := (C/(2\pi))^{1/s} \ep/2$, we have $\theta \geq d s$, and for $\alpha \in \nn d$ and $\xi \in \rr d$
\begin{equation}\nonumber
\begin{aligned}
\left( \frac{2 \pi |\xi|}{C} \right)^{|\alpha|} (\alpha !)^{-s}
& = \left( \frac{ds}{\theta} \right)^{s |\alpha|} \left( \frac{( \theta/(ds) (2 \pi/C)^{1/s} |\xi|^{1/s} )^{|\alpha|}}{\alpha!} \right)^s \\
& \leq \exp \left( \theta (2 \pi /C)^{1/s} |\xi|^{1/s} \right) = \exp \left( \ep/2 |\xi|^{1/s} \right).
\end{aligned}
\end{equation}
This gives
\begin{equation}\nonumber
\begin{aligned}
\| f \|_{s,C} & = \sup_{\alpha \in \nn d, \ x \in \rr d} C^{-|\alpha|} (\alpha!)^{-s} |\pd \alpha f(x)|
\leq \sup_{\alpha \in \nn d} \sum_{\xi \in \La} \left( \frac{2 \pi |\xi|}{C} \right)^{|\alpha|} (\alpha!)^{-s} |\wh f_\xi| \\
& \leq \sum_{\xi \in \La} \exp \left( \ep \ |\xi|^{1/s} \right) |\wh f_\xi| \exp \left( -\ep/2 \ |\xi|^{1/s} \right)
\leq C_1 \| f \|_{W_{s,-\ep,\La}^p}
\end{aligned}
\end{equation}
for some $C_1>0$, in the last step invoking H\"older's inequality and \eqref{frequencyrequirement1}.
Since $TP(\La)$ is dense in $W_{s,-\ep,\La}^p(\rr d)$, we may conclude that the embedding $W_{s,-\ep,\La}^p(\rr d) \subseteq G_{{\rm ap},\La}^s(\rr d)$ holds true.
Finally the embedding ``$\supseteq$'' in \eqref{frequencysepequality1} follows from Lemma \ref{inductivecont1}.
\end{proof}

\subsection{The dual space of $G_{{\rm ap}}^s(\rr d)$}

The topological dual of $G_{{\rm ap}}^s(\rr d)$ is denoted $(G_{{\rm ap}}^s)'(\rr d)$. We equip $(G_{{\rm ap}}^s)'(\rr d)$ with its weak$^*$ topology.
We have the following embedding result.

\begin{prop}\label{dualembedding1}
Let $f \in W_{s,0}^1(\rr d)$ define a conjugate linear functional on $G_{\rm ap}^s(\rr d)$ by means of
\begin{equation}\nonumber
( f ,g ) = \sum_{\xi \in \rr d} \wh f_\xi \ \overline{\wh g_\xi}, \quad g \in G_{\rm ap}^s(\rr d).
\end{equation}
Then we have the embedding
$$
W_{s,0}^1(\rr d) \subseteq (G_{{\rm ap}}^s)'(\rr d).
$$
\end{prop}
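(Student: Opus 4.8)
The plan is to show that the formula $(f,g) = \sum_{\xi \in \rr d} \wh f_\xi \, \overline{\wh g_\xi}$ defines, for each fixed $f \in W_{s,0}^1(\rr d)$, a continuous conjugate linear functional on $G_{\rm ap}^s(\rr d)$, and that the resulting map $f \mapsto (f,\cdot)$ is injective and continuous into $(G_{\rm ap}^s)'(\rr d)$ with its weak$^*$ topology. First I would fix $f \in W_{s,0}^1(\rr d)$, so that by definition $f \in W_{s,\ep}^1(\rr d)$ for every $\ep>0$, i.e. $\sum_{\xi \in \rr d} \exp(-\ep |\xi|^{1/s}) |\wh f_\xi| < \infty$ for all $\ep>0$. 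The absolute summability of the pairing must be checked: for $g \in G_{{\rm ap},C}^s(\rr d)$, Lemma \ref{fouriergevrey1} gives a constant $\ep_0>0$ (depending only on $s$, $C$, $d$ via \eqref{epsilondefinition1}) and $C_1>0$ with $|\wh g_\xi| \leq C_1 \| g \|_{s,C} \exp(-\ep_0 |\xi|^{1/s})$ for all $\xi \in \rr d$. Hence
\begin{equation}\nonumber
\sum_{\xi \in \rr d} |\wh f_\xi| \, |\wh g_\xi| \leq C_1 \| g \|_{s,C} \sum_{\xi \in \rr d} \exp(-\ep_0 |\xi|^{1/s}) |\wh f_\xi| = C_1 \| g \|_{s,C} \, \| f \|_{W_{s,\ep_0}^1} < \infty,
\end{equation}
since $f \in W_{s,\ep_0}^1(\rr d)$. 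This shows the series converges absolutely and that $|(f,g)| \leq C_1 \| f \|_{W_{s,\ep_0}^1} \, \| g \|_{s,C}$, so the functional is continuous on each Banach space $G_{{\rm ap},C}^s(\rr d)$.

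Next I would upgrade this to continuity on the inductive limit $G_{\rm ap}^s(\rr d) = \bigcup_{C>0} G_{{\rm ap},C}^s(\rr d)$. By Lemma \ref{inductivecont1}, a linear map from $G_{\rm ap}^s(\rr d)$ into a locally convex space (here $\co$) is continuous if and only if its restriction to each $G_{{\rm ap},C}^s(\rr d)$ is continuous; the estimate just obtained provides exactly this for every $C>0$. Strictly speaking, one should note that $(f,\cdot)$ is conjugate linear, but this causes no difficulty: linearity of $\xi \mapsto \overline{\wh g_\xi}$ in $\overline g$ (equivalently, applying the lemma to the conjugate-linear case, or composing with complex conjugation) gives the same conclusion, consistent with the paper's convention that $(\cdot,\cdot)$ is conjugate linear in the second slot. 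Thus $(f,\cdot) \in (G_{\rm ap}^s)'(\rr d)$ for each $f \in W_{s,0}^1(\rr d)$, and the map $\iota: f \mapsto (f,\cdot)$ is well defined.

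It remains to verify that $\iota$ is injective and continuous into the weak$^*$ topology, which are the two ingredients needed for ``embedding'' in the paper's sense. For injectivity: if $(f,g)=0$ for all $g \in G_{\rm ap}^s(\rr d)$, then in particular testing against the exponentials $g = e_\eta$, $\eta \in \rr d$ — which lie in $G_{{\rm ap},C}^s(\rr d)$ for every $C>0$ by \eqref{exponentialgevreybound0}, with $\wh{(e_\eta)}_\xi = \delta_{\xi,\eta}$ — yields $\wh f_\eta = 0$ for every $\eta \in \rr d$; since $f \in W_{s,\ep}^1 \subseteq W_0^1 \subseteq C_{\rm ap}$ for $\ep$ small (or directly from Parseval on $B^2$, using $W_{s,0}^1 \subseteq W_{s,0}^2$), the vanishing of all Bohr--Fourier coefficients forces $f=0$. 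For weak$^*$ continuity of $\iota$: a net $f_j \to f$ in $W_{s,0}^1(\rr d)$ means $\| f_j - f \|_{W_{s,\ep}^1} \to 0$ for every $\ep>0$; then for each fixed $g \in G_{{\rm ap},C}^s(\rr d)$ the bound $|(f_j - f, g)| \leq C_1 \| f_j - f \|_{W_{s,\ep_0}^1} \| g \|_{s,C}$ shows $(f_j,g) \to (f,g)$, i.e. $\iota(f_j) \to \iota(f)$ weak$^*$; hence $\iota$ is continuous. I do not expect any serious obstacle here — the one point requiring a little care is that the constant $\ep_0$ from Lemma \ref{fouriergevrey1} depends on the Banach step $C$, so the continuity estimate is genuinely an estimate on each $G_{{\rm ap},C}^s$ separately and the passage to the inductive limit must go through Lemma \ref{inductivecont1} rather than a single uniform bound; a second minor point is confirming that the density of $TP(\rr d)$ in $G_{{\rm ap},C}^s$ is not actually needed, since the defining formula for $(f,g)$ already makes sense for all $g$ directly via Lemma \ref{fouriergevrey1}.
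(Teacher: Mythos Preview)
Your argument is correct and somewhat more direct than the paper's. The core estimate $|(f,g)| \leq C_1 \|g\|_{s,C}\,\|f\|_{W_{s,\ep_0}^1}$ via Lemma~\ref{fouriergevrey1} is the same, and so is the use of Lemma~\ref{inductivecont1} to pass to the inductive limit. Where you diverge is in the final step: you verify weak$^*$ continuity of $\iota: W_{s,0}^1 \to (G_{\rm ap}^s)'$ directly, observing that for each fixed $g \in G_{{\rm ap},C}^s$ the map $f \mapsto (f,g)$ is bounded by a single seminorm $\|\cdot\|_{W_{s,\ep_0}^1}$ of the projective limit $W_{s,0}^1$. The paper instead first proves the embedding $W_{s,0}^1 \subseteq \bigcap_{C>0} (G_{{\rm ap},C}^s)'$ via Lemma~\ref{projectivecont1}, and then establishes that $\bigcap_{C>0} (G_{{\rm ap},C}^s)' = (G_{\rm ap}^s)'$ is a homeomorphism by an explicit comparison of weak$^*$ neighborhood bases. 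Your route is shorter; the paper's route yields the homeomorphism \eqref{projlimit1} as a byproduct, which is of some independent interest given that the inductive limit $G_{\rm ap}^s$ is not known to be Hausdorff.

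One small slip: in your injectivity paragraph the inclusion ``$W_{s,\ep}^1 \subseteq W_0^1$ for $\ep$ small'' is written the wrong way round (for $\ep>0$ the weight $\exp(-\ep|\xi|^{1/s})$ is $\leq 1$, so $W_0^1 \subseteq W_{s,\ep}^1$, and in fact $W_{-\infty}^1 \subseteq W_{s,0}^1$). This does not matter for the conclusion: elements of $W_{s,0}^1$ are by construction completions of $TP(\rr d)$ in norms depending only on the Bohr--Fourier coefficients, so $\wh f_\xi = 0$ for all $\xi$ forces $f=0$ in $W_{s,0}^1$ immediately, without any appeal to $C_{\rm ap}$ or Parseval. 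The paper, incidentally, does not discuss injectivity in this proof at all, taking ``embedding'' to mean simply a continuous inclusion.
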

\begin{proof}
Let $f \in W_{s,0}^1(\rr d)$, $C>0$ and $g \in G_{{\rm ap},C}^s(\rr d)$.
Lemma \ref{fouriergevrey1} gives for some $C_1,\ep>0$
\begin{equation}\nonumber
|( f ,g ) | \leq C_1 \| g \|_{s,C} \sum_{\xi \in \rr d} |\wh f_\xi| \exp\left( - \ep |\xi|^{1/s} \right)
= C_1 \| g \|_{s,C} \| f \|_{W_{s,\ep}^1},
\end{equation}
which shows the embedding $W_{s,\ep}^1(\rr d) \subseteq (G_{{\rm ap},C}^s)'(\rr d)$ when $(G_{{\rm ap},C}^s)'(\rr d)$ carries its weak$^*$ topology.
Hence we have the embedding $W_{s,0}^1(\rr d) \subseteq (G_{{\rm ap},C}^s)'(\rr d)$ for any $C>0$.
It follows from Lemma \ref{projectivecont1} that we have the embedding
\begin{equation}\nonumber
W_{s,0}^1(\rr d) \subseteq \bigcap_{C>0} (G_{{\rm ap},C}^s)'(\rr d)
\end{equation}
where the right hand side has the projective limit topology.

Lemma \ref{inductivecont1} gives the equality as sets
\begin{equation}\label{projlimit1}
\bigcap_{C>0} (G_{{\rm ap},C}^s )'(\rr d) = (G_{\rm ap}^s)'(\rr d).
\end{equation}
To prove the claimed embedding, it thus suffices to prove that \eqref{projlimit1} is a homeomorphism,
when $(G_{\rm ap}^s)'(\rr d)$ carries the weak$^*$ topology,
and the left hand side is the projective limit of $(G_{{\rm ap},C}^s )'(\rr d)$ over all $C>0$,
where $(G_{{\rm ap},C}^s )'(\rr d)$ carries its weak$^*$ topology.

A base of the neighborhoods of zero of $(G_{\rm ap}^s)'(\rr d)$ are given by all finite intersections of the form
\begin{equation}\label{neighborhood1}
\bigcap_{j=1}^n \{ f \in (G_{{\rm ap}}^s )'(\rr d): \ |( f, g_j )|<\ep_j \}, \quad g_j \in G_{{\rm ap}}^s (\rr d), \ \ep_j>0, \ 1 \leq j \leq n.
\end{equation}
On the other hand, for $C>0$, a base of the neighborhoods of zero of $(G_{{\rm ap},C}^s )'(\rr d)$ consists of all finite intersections of the form
\begin{equation}\label{neighborhood2}
\begin{aligned}
U_C = & \ \bigcap_{j=1}^k \{ f \in (G_{{\rm ap},C}^s )'(\rr d): \ |( f, g_j )|<\ep_j \}, \\
& \quad g_j \in G_{{\rm ap},C}^s (\rr d), \ \ep_j>0, \ 1 \leq j \leq k.
\end{aligned}
\end{equation}
Moreover, a base of the neighborhoods of zero of the left hand side projective limit in \eqref{projlimit1} consists of all finite intersections of the form
\begin{equation}\label{neighborhood3}
\bigcap_{C>0} (G_{{\rm ap},C}^s )'(\rr d) \cap U_{C_1} \cap \cdots \cap U_{C_m}
\end{equation}
where $U_{C_j}$ have the form \eqref{neighborhood2}, $C_j>0$, $1 \leq j \leq m$ (see \cite[Chapter II.5]{Schaefer1}).
It follows from the already established set equality \eqref{projlimit1} that the neighborhood bases of the forms \eqref{neighborhood3} and \eqref{neighborhood1} respectively, are equal, proving the homeomorphism \eqref{projlimit1}. (See \cite[Proposition IV.4.5]{Schaefer1} for a similar result when the inductive limit, corresponding to $G_{{\rm ap}}^s(\rr d)$, is known to be Hausdorff.)
\end{proof}

We may identify $G_{{\rm ap}}^s(\rr d)$ as a subspace of $(G_{{\rm ap}}^s)'(\rr d)$ by means of
$f \in G_{{\rm ap}}^s(\rr d)$ acting on $g \in G_{{\rm ap}}^s(\rr d)$ via $(f,g)_B$.
The inclusion $G_{{\rm ap}}^s(\rr d) \subseteq (G_{{\rm ap}}^s)'(\rr d)$ is injective, since $(f,g)_B=0$ for all $g \in G_{{\rm ap}}^s(\rr d)$ forces $\wh f _\xi=0$ for all $\xi \in \rr d$.
In the case when \eqref{frequencyrequirement1} is satisfied for the set of frequencies $\La$,
Propositions \ref{frequencyseparation1} and \ref{dualembedding1} imply that
the inclusion is an embedding.

Note also that, independently of \eqref{frequencyrequirement1}, we may identify $W_{s,0}^1(\rr d)$ for $s>1$ with a subspace of the space of $s$-ultradistributions $\mathscr D_s'(\rr d)$ \cite{Rodino1}. In fact, for $f \in W_{s,0}^1(\rr d)$ and $\varphi \in G_c^s(\rr d)$ we may define
\begin{equation}\nonumber
\langle f, \varphi \rangle = \sum_{\xi \in \rr d} \wh f_\xi \ \mathscr F \varphi (\xi),
\end{equation}
cf. \cite[Theorem 1.6.1]{Rodino1}.

%%%%%%%%%%%%%%%%%%%%%%%%%%%%%%%%%%%%%%%%%%%%%%%%%%%%%%%%%%%%%%%%%%%%%%%%%%%%%%%%%%%%%%%%%%%%%%%%%%%%%%%%%%%%%%%%%%%
\section{Almost periodic pseudodifferential operators acting on almost periodic Gevrey spaces}\label{sectionappsdo}
%%%%%%%%%%%%%%%%%%%%%%%%%%%%%%%%%%%%%%%%%%%%%%%%%%%%%%%%%%%%%%%%%%%%%%%%%%%%%%%%%%%%%%%%%%%%%%%%%%%%%%%%%%%%%%%%%%%

First we discuss pseudodifferential operators acting on $C_b^\infty(\rr d)$ (see\cite{Shubin1,Shubin4}).

\begin{defn}\label{symbclass0}
Let $m \in \ro$, $0 \leq \rho,\delta \leq 1$ and
\begin{equation}\label{rhodeltakrav1}
0 < \rho \leq 1, \quad 0 \leq \delta < 1, \quad \delta
\leq \rho.
\end{equation}
The H\"ormander symbol class $S_{\rho,\delta}^{m}$ is defined as the space of all $a \in C^\infty(\rr {2d})$ such that
\begin{equation}\label{symbclass0b}
|\partial_\xi^\alpha \partial_x^\beta a(x,\xi)| \leq C_{\alpha,\beta} \eabs \xi^{m - \rho |\alpha| + \delta |\beta|}, \quad \alpha,\beta \in \nn d, \quad C_{\alpha,\beta}>0.
\end{equation}
\end{defn}

In this paper we always assume that \eqref{rhodeltakrav1} is satisfied.
For $a \in S_{\rho,\delta}^{m}$, a pseudodifferential operator acting on $\mathscr S(\rr d)$ in the Kohn--Nirenberg quantization is defined as (see e.g. \cite{Folland1,Hormander1,Shubin5})
\begin{equation}\label{kndef2}
a(x,D) f(x) = \int_{\rr d} e^{2 \pi i \xi \cdot x} a(x,\xi) \mathscr F
f(\xi) {\,} d\xi, \quad f \in \mathscr S(\rr d).
\end{equation}
This gives a continuous operator on $\mathscr S (\rr d)$, that can be extended to a continuous operator on $\mathscr S' (\rr d)$ by means of duality.
However, in this paper we are interested in the action of pseudodifferential operators on $C_b^\infty(\rr d)$, and in this situation it is suitable to define them as the limit (cf. \cite{Shubin5})
\begin{equation}\label{extension2}
a(x,D) f(x) = \lim_{\ep \rightarrow +0} \iint_{\rr {2d}} \chi(\ep y)
\chi(\ep\xi) e^{2 \pi i \xi \cdot (x-y)} a(x,\xi) f(y) {\,} dy {\,}
d\xi
\end{equation}
where $\chi \in C_c^\infty(\rr d)$ equals one in a neighborhood of
the origin.
Integration by parts in \eqref{extension2} gives
\begin{equation}\label{psdopartint1}
\begin{aligned}
a(x,D) f(x) = & \iint_{\rr {2d}} e^{2 \pi i \xi \cdot (x-y)}
\eabs{\xi}^{-2N} (1-\Delta_\xi)^M a(x,\xi) \\
& \times (1-\Delta_y)^N( \eabs{x-y}^{-2M} f(y) ) {\,} dy {\,} d\xi \\
= & \iint_{\rr {2d}} e^{-2 \pi i \xi \cdot y}
\eabs{\xi}^{-2N} (1-\Delta_\xi)^M a(x,\xi) \\
& \times (1-\Delta_y)^N( \eabs{y}^{-2M} f(y+x) ) {\,} dy {\,} d\xi, \\
\end{aligned}
\end{equation}
where $\Delta$ denotes the normalized Laplacian $\Delta = (2
\pi)^{-2} \sum_1^d \partial_j^2$. This is an absolutely convergent
integral for $f \in C_b^\infty(\rr d)$ provided $M,N$ are positive integers chosen such that $2M>d$ and $2N>d+m$.
By differentiation under the integral it follows that $a(x,D):
C_b^\infty(\rr d) \mapsto C_b^\infty(\rr d)$ continuously (see e.g. \cite{Shubin4}).

The subspace of $S_{\rho,\delta}^m$ consisting of symbols $a \in S_{\rho,\delta}^m$ that satisfy $a(\cdot,\xi) \in C_{\rm ap}(\rr d)$ for all $\xi \in \rr d$ is denoted $APS_{\rho,\delta}^m (\rr {2d})$ and was introduced by Shubin \cite{Shubin1}. The corresponding operators are called almost periodic pseudodifferential operators.
A result by Shubin \cite{Shubin2,Shubin4} says that $a(x,D)$ is continuous on $C_{\rm ap}^\infty(\rr d)$ if $a \in APS_{\rho,\delta}^m (\rr {2d})$.

Now we introduce symbol classes
$$
APS_{\rho,\delta}^{m,s} (\rr {2d}) \subseteq APS_{\rho,\delta}^m (\rr {2d})
$$
based on \cite{Hashimoto1,Liess1,Zanghirati1} and \cite[Chapter 3]{Rodino1} for almost periodic pseudodifferential operators acting on almost periodic Gevrey functions.

\begin{defn}\label{symbclass1}
Let $s \geq 1$, $\rho>\delta$, $s(\rho-\delta) \geq 1$. The symbol class $APS_{\rho,\delta}^{m,s}(\rr {2d})$ is defined as the space of all $a \in C^\infty (\rr {2d})$ such that
$a(\cdot,\xi) \in C_{\rm ap}(\rr d)$ for all $\xi \in \rr d$, and
\begin{align}
|\partial_\xi^\alpha \partial_x^\beta a(x,\xi)|
\leq & \ C^{1+ |\alpha| + |\beta|} (\beta!)^{s(\rho-\delta)} \alpha! \eabs \xi^{m - \rho |\alpha| + \delta |\beta|}, \label{symbclass2} \\
& x \in \rr d, \quad \eabs \xi \geq B |\alpha|^s, \quad \alpha,\beta \in \nn d, \nonumber \\
|\partial_\xi^\alpha \partial_x^\beta a(x,\xi)|
\leq & \ C^{1+ |\alpha| + |\beta|} (\beta!)^{s(\rho-\delta)} \alpha!, \label{symbclass2p} \\
& x \in \rr d, \quad \eabs \xi < B |\alpha|^s, \quad \alpha,\beta \in \nn d, \quad |\alpha| \leq 2M, \nonumber
\end{align}
for some constants $C,B>0$ and a positive integer $M>d/2$.
\end{defn}

The reader may take as initial definition of symbols the estimates \eqref{symbclass2} in the whole $\rr {2d}$, which imply \eqref{symbclass2p}; however, the more general Definition \ref{symbclass1} will be useful when constructing parametrices.

We also need a generalization of the symbol classes of Definition \ref{symbclass1}, in the sense that the symbols depend on three variables, in which case they are called amplitudes.

\begin{defn}\label{symbclass3}
Let $s \geq 1$, $\rho>\delta$, $s(\rho-\delta) \geq 1$. The amplitude class $APS_{\rho,\delta}^{m,s}(\rr {3d})$ is defined as the space of all $a \in C^\infty (\rr {3d})$ such that
$a(\cdot,\cdot,\xi) \in C_{\rm ap}(\rr {2d})$ for all $\xi \in \rr d$, and
\begin{align}
|\partial_\xi^\alpha \partial_x^\beta \partial_y^\gamma a(x,y,\xi)|
\leq & \ C^{1+ |\alpha| + |\beta| + |\gamma|} (\beta! \gamma!)^{s(\rho-\delta)} \alpha! \eabs \xi^{m - \rho |\alpha| + \delta (|\beta|+|\gamma|)}, \label{symbclass3a} \\
& x,y \in \rr d, \quad \eabs \xi \geq B |\alpha|^s, \quad \alpha,\beta,\gamma \in \nn d, \nonumber \\
|\partial_\xi^\alpha \partial_x^\beta \partial_y^\gamma a(x,y,\xi)|
\leq & \ C^{1+ |\alpha| + |\beta| + |\gamma|} (\beta! \gamma!)^{s(\rho-\delta)} \alpha!, \label{symbclass3ap} \\
& x,y \in \rr d, \quad \eabs \xi < B |\alpha|^s \quad \alpha,\beta,\gamma \in \nn d, \quad |\alpha| \leq 2M, \nonumber
\end{align}
for some constants $C,B>0$ and a positive integer $M>d/2$.
\end{defn}

An amplitude $a \in APS_{\rho,\delta}^{m,s}(\rr {3d})$ defines an operator acting on $f \in C_b^\infty(\rr d)$ by means of the limit
\begin{equation}\label{extension3}
a(x,D) f(x) = \lim_{\ep \rightarrow +0} \iint_{\rr {2d}} \chi(\ep y)
\chi(\ep\xi) e^{2 \pi i \xi \cdot (x-y)} a(x,y,\xi) f(y) {\,} dy {\,}
d\xi.
\end{equation}
Integration by parts gives
\begin{equation}\label{psdopartint3}
\begin{aligned}
a(x,D) f(x)
= &  \iint_{\rr {2d}} e^{2 \pi i \xi \cdot (x-y)}
\eabs{\xi}^{-2N}  \\
& \times (1-\Delta_y)^N \left[ \eabs{x-y}^{-2M} f(y) (1-\Delta_\xi)^M a(x,y,\xi) \right] {\,} dy {\,} d\xi \\
= &  \iint_{\rr {2d}} e^{-2 \pi i \xi \cdot y}
\eabs{\xi}^{-2N}  \\
& \times (1-\Delta_y)^N \left[ \eabs{y}^{-2M} f(y+x) (1-\Delta_\xi)^M a(x,x+y,\xi) \right] {\,} dy {\,} d\xi.
\end{aligned}
\end{equation}
The integral is absolutely convergent for $f \in C_b^\infty(\rr d)$ provided $2M>d$ and $2N>(d+m)/(1-\delta)$.
By differentiation under the integral it follows that $a(x,D):
C_b^\infty(\rr d) \mapsto C_b^\infty(\rr d)$ continuously.
Moreover, from Shubin's argument for the continuity on $C_{\rm ap}^\infty(\rr d)$ (cf. \cite{Shubin4}) it follows that $a(x,D): C_{\rm ap}^\infty(\rr d) \mapsto C_{\rm ap}^\infty(\rr d)$ when $a \in APS_{\rho,\delta}^{m,s}(\rr {3d})$.
In fact, using the Banach space property of $C_{\rm ap}(\rr d)$ (equipped with the $L^\infty$-norm), a derivative of the left hand side of \eqref{psdopartint3} can be seen as a vector-valued integral, of a continuous function with values in $C_{\rm ap}(\rr d)$, whose norm is integrable. It therefore belongs to $C_{\rm ap}(\rr d)$ and this gives the desired continuity on $C_{\rm ap}^\infty(\rr d)$.

In \cite{Rodino1} results are proved concerning $s$-Gevrey pseudolocality, which means that an operator preserves the $G^s$ property locally. The following result concerns a global version of this idea for almost periodic operators acting on almost periodic functions. Note that we require $\delta=0$.

\begin{prop}\label{gevreycont2}
If $a \in APS_{\rho,0}^{m,s}(\rr {3d})$ then $a(x,D)$ is continuous on $G_{\rm ap}^s(\rr d)$.
\end{prop}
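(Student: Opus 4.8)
The plan is to reduce the continuity of $a(x,D)$ on the inductive limit $G_{\rm ap}^s(\rr d) = \bigcup_{C>0} G_{{\rm ap},C}^s(\rr d)$ to a family of Banach-space estimates, using \lemref{inductivecont1}. Thus it suffices to show that for each $C>0$ there is a $C'>0$ such that $a(x,D)$ maps $G_{{\rm ap},C}^s(\rr d)$ continuously into $G_{{\rm ap},C'}^s(\rr d)$, i.e.\ we must produce a bound of the form $\sup_{x} |\pd \gamma (a(x,D)f)(x)| \leq K\, (C')^{|\gamma|} (\gamma!)^s \, \| f \|_{s,C}$ for all $\gamma \in \nn d$, with $K, C'$ depending only on $C$ and on the symbol constants $B$, $C$, $M$ in \defnref{symbclass3}. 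Since we already know from the discussion after \eqref{psdopartint3} that $a(x,D)f \in C_{\rm ap}^\infty(\rr d)$ when $f \in C_{\rm ap}^\infty(\rr d) \supseteq G_{{\rm ap},C}^s(\rr d)$, the only new point is the quantitative Gevrey estimate with uniformity in $x$.

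First I would fix $f \in G_{{\rm ap},C}^s(\rr d)$ and work from the absolutely convergent representation \eqref{psdopartint3}, differentiating $\gamma$ times under the integral in the $x$-variable. Each $\pd x \gamma$ falls either on $a(x,x+y,\xi)$ (producing, via the chain rule, a sum over splittings $\gamma = \gamma_1 + \gamma_2$ of terms $\pdd x {\gamma_1}\pdd y {\gamma_2} a$) or on $f(y+x)$ (producing $\pd{\gamma_3} f(y+x)$), with a multinomial coefficient. The factor $\pd{\gamma_3} f(y+x)$ is bounded in $L^\infty$ by $C^{|\gamma_3|}(\gamma_3!)^s \|f\|_{s,C}$ uniformly in $x,y$; the amplitude derivatives are controlled by \eqref{symbclass3a}–\eqref{symbclass3ap} with $\delta=0$, which gives $(\gamma_1!\,\gamma_2!)^{s\rho}\,\alpha!\,\eabs\xi^{m-\rho|\alpha|}$ in the region $\eabs\xi \geq B|\alpha|^s$ and a bound without the $\xi$-weight but with $|\alpha|\le 2M$ in the complementary region — exactly the dichotomy built into the definition. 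Choosing $N$ large (as prescribed, $2N > d+m$, here $\delta=0$), the $\xi$-integral $\int \eabs\xi^{m-2N}\,d\xi$ converges, and the $y$-integral converges because of the $\eabs{y}^{-2M}$ factor with $2M>d$; the $(1-\Delta_\xi)^M$ only ever introduces $\le 2M$ $\xi$-derivatives, which is why the truncated estimate \eqref{symbclass3ap} with $|\alpha|\le 2M$ is all that is needed. Collecting the combinatorial factors, one uses the standard Gevrey inequalities $\gamma_1!\gamma_2!\gamma_3! \le \gamma!$ and $(\gamma_1!\gamma_2!)^{s\rho}(\gamma_3!)^s \le (\gamma!)^s$ (valid since $s\rho \le s$, using $s\rho\ge1$), together with $\binom{\gamma}{\gamma_1,\gamma_2,\gamma_3}\le 3^{|\gamma|}$, to absorb everything into $(C')^{|\gamma|}(\gamma!)^s$ for a suitable $C'=C'(C,B,\ldots)$; the number of splittings contributes another geometric factor $3^{|\gamma|}$.

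The main obstacle — and the reason $\delta=0$ is imposed, as the authors themselves flag before \prop{gevreycont2} — is keeping all constants \emph{uniform in $x$}: the $L^\infty$ norms in $x$ of all the ingredients ($\pd{\gamma_3}f$, the $\xi$-derivatives of $a$, the $\eabs{x-y}^{-2M}$ weight after the shift $y \mapsto y+x$) must be bounded independently of $x$, which works precisely because $f \in G_{{\rm ap},C}^s$ has a \emph{global} constant and the amplitude estimates \eqref{symbclass3a}–\eqref{symbclass3ap} hold uniformly in $x,y$. A secondary subtlety is the interface between the two regions $\eabs\xi \gtrless B|\alpha|^s$: for the $\le 2M$ values of $|\alpha|=|\alpha(M)|$ that actually occur one splits the $\xi$-integral at $\eabs\xi = B|\alpha|^s$, bounds the inner part by the trivial estimate \eqref{symbclass3ap} (over a region of volume $\lesssim (B|\alpha|^s)^d$, harmless since $|\alpha|\le 2M$ is a fixed finite range) and the outer part by \eqref{symbclass3a}; since $M$ is fixed this produces only an $x$- and $\gamma$-independent constant. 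Finally, that $a(x,D)f$ is again almost periodic (hence in $G_{{\rm ap},C'}^s$ and not merely $C_b^\infty$) follows, as noted after \eqref{psdopartint3}, from viewing the $\gamma$-th derivative as a $C_{\rm ap}(\rr d)$-valued Bochner integral of a norm-integrable continuous integrand. Applying \lemref{inductivecont1} then yields continuity on $G_{\rm ap}^s(\rr d)$.
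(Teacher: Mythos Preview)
Your proposal is correct and follows essentially the same route as the paper's proof: start from the absolutely convergent representation \eqref{psdopartint3}, expand $(1-\Delta_y)^N$ and $(1-\Delta_\xi)^M$, differentiate under the integral in $x$ with the Leibniz/chain-rule splitting you describe, bound the amplitude pieces via \eqref{symbclass3a}--\eqref{symbclass3ap} (splitting the $\xi$-integral at $\eabs\xi = B|\alpha|^s$ for the finitely many $|\alpha|\le 2M$), collect factorials with the standard inequalities, and conclude via \lemref{inductivecont1}. The only point worth tightening is that the $(1-\Delta_y)^N$ also deposits up to $2N$ extra $y$-derivatives on both $f(y+x)$ and the amplitude before you apply $\partial_x^\gamma$, so the factorials you must control are really $(\kappa+\gamma_3)!$ and $(\mu+\gamma_2)!$ with $|\kappa|,|\mu|\le 2N$ fixed; the paper handles this explicitly via $(\kappa+\beta)!\le 2^{|\kappa|+|\beta|}\kappa!\,\beta!$, which is exactly the missing combinatorial line in your sketch (and your parenthetical ``using $s\rho\ge 1$'' is not actually needed for the inequality $(\gamma_1!\gamma_2!)^{s\rho}(\gamma_3!)^s\le(\gamma!)^s$---only $\rho\le 1$ is).
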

\begin{proof}
We may assume that the constant $C>0$ in \eqref{symbclass3a}, \eqref{symbclass3ap} satisfies $C \geq 1$.
From \cite[Proposition 3.1]{Shubin4}, and as discussed above, we have that $a(x,D)$ maps $C_{\rm ap}^\infty(\rr d)$ into itself continuously.
It remains to prove continuity on $G_{\rm ap}^s(\rr d)$.
Let $C_1>0$, $f \in G_{{\rm ap},C_1}^s(\rr d)$ and $2N>d+m$.
First we rewrite \eqref{psdopartint3} as
\begin{equation}\nonumber
\begin{aligned}
a(x,D) f(x)
= &  \iint_{\rr {2d}} e^{-2 \pi i \xi \cdot y}
\eabs{\xi}^{-2N}  \\
& \sum_{\stackrel{|\sigma|+|\kappa|+|\mu| \leq 2N}{|\gamma| \leq 2M}} C_{\sigma,\kappa,\mu,\gamma} \
\pd \sigma \eabs{y}^{-2M} \pdd y \kappa f(y+x) \pdd y \mu \pdd \xi \gamma a(x,x+y,\xi) {\,} dy {\,} d\xi.
\end{aligned}
\end{equation}
We may differentiate under the integral since the differentiated integrand is absolutely integrable. Hence we obtain, using \eqref{symbclass3a}, \eqref{symbclass3ap} and \eqref{banachnorm1},
\begin{equation}\label{psdopartint4}
\begin{aligned}
& |\pd \alpha \left( a(x,D) f(x) \right) |
\leq \sum_{\stackrel{|\sigma|+|\kappa|+|\mu| \leq 2N}{|\gamma| \leq 2M}} |C_{\sigma,\kappa,\mu,\gamma}|
\sum_{\beta \leq \alpha} \binom \alpha \beta
\sum_{\la \leq \alpha-\beta} \binom {\alpha-\beta} {\lambda} \\
& \times \iint_{\rr {2d}}
\eabs{\xi}^{-2N} \left| \pd \sigma \eabs{y}^{-2M} \partial_y^{\kappa+\beta} f(y+x) 
\left( \partial_x^{\alpha-\beta-\lambda} \partial_y^{\mu+\lambda} \pdd \xi \gamma a \right) (x,x+y,\xi) \right| {\,} dy {\,} d\xi \\
\leq & \ C_{N,M} \| f \|_{s,C_1}
\sum_{\stackrel{|\sigma|+|\kappa|+|\mu| \leq 2N}{|\gamma| \leq 2M, \ \beta \leq \alpha, \ \la \leq \alpha-\beta}}
\binom \alpha \beta \binom {\alpha-\beta} {\lambda}
C_1^{|\kappa|+|\beta|} \left((\kappa+\beta)! \right)^s \\
& \times C^{1+|\alpha|-|\beta|+|\mu|+|\gamma|} ((\alpha-\beta-\la)! (\mu+\la)!)^{s \rho} \gamma! \\
\leq & \ D_{N,M} \| f \|_{s,C_1} C_3^{1 + |\alpha|} (\alpha!)^s
\sum_{\stackrel{|\sigma|+|\kappa|+|\mu| \leq 2N}{|\gamma| \leq 2M, \ \beta \leq \alpha, \ \la \leq \alpha-\beta}}
\frac{\left((\kappa+\beta)! (\alpha-\beta-\la)! (\mu+\la)! \right)^s}{(\beta! \la! (\alpha-\beta-\la)!)^s},
\end{aligned}
\end{equation}
where $C_{N,M}, D_{N,M}>0$ depend on $N,M$ only and $C_3 = \max(C,C_1)$.
In fact, for $\eabs{\xi} \geq B|\gamma|^s$ we may use \eqref{symbclass3a}. The remaining integral over
$\{\xi: \eabs{\xi} < B|\gamma|^s\}$ can be estimated by \eqref{symbclass3ap} times the Lebesgue measure of
$\{\xi: \eabs{\xi} < B(2 M)^s\}$.
Next we use
\begin{equation}\nonumber
\begin{aligned}
(\kappa+\beta)! \leq 2^{|\kappa|+|\beta|} \kappa! \beta ! \quad \mbox{and} \quad \sum_{\beta \leq \alpha} \binom{\alpha}{\beta} = 2^{|\alpha|},
\end{aligned}
\end{equation}
which give
\begin{equation}\nonumber
\begin{aligned}
& \sum_{\stackrel{|\sigma|+|\kappa|+|\mu| \leq 2N}{|\gamma| \leq 2M, \ \beta \leq \alpha, \ \la \leq \alpha-\beta}}
\frac{\left((\kappa+\beta)! (\la+\mu)! \right)^s}{(\beta! \la!)^s}
\leq \sum_{\stackrel{|\sigma|+|\kappa|+|\mu| \leq 2N}{|\gamma| \leq 2M, \ \beta \leq \alpha, \ \la \leq \alpha-\beta}}
2^{s(|\kappa|+|\beta|+|\la|+|\mu|)} (\kappa! \mu!)^s \\
& \leq E_{N,M} 2^{2 s|\alpha|} \sum_{\beta \leq \alpha} \binom{\alpha}{\beta} \sum_{\la \leq \alpha-\beta} \binom{\alpha-\beta}{\la}
= E_{N,M} 2^{2 (s+1) |\alpha|},
\end{aligned}
\end{equation}
where $E_{N,M}>0$ depends on $N,M$ only. Inserted into \eqref{psdopartint4} this gives
\begin{equation}\nonumber
|\pd \alpha \left( a(x,D) f(x) \right) | \leq \ D_{N,M} E_{N,M} \| f \|_{s,C_1} C_2^{1 + |\alpha|} (\alpha!)^s,\quad x \in \rr d,
\end{equation}
where $C_2 = \max(C,C_1,2^{2(s+1)})^2$.
This gives the estimate
\begin{equation}\label{continuityestimate1}
\| a(x,D) f \|_{s,C_2} \leq C_2 D_{N,M} E_{N,M} \| f \|_{s,C_1},
\end{equation}
which proves the continuity of $a(x,D): G_{{\rm ap},C_1}^s(\rr d) \mapsto G_{{\rm ap},C_2}^s(\rr d)$ for $C_2=\max(C,C_1,2^{2(s+1)})^2$. Using the fact that the inclusion $G_{{\rm ap},C_2}^s(\rr d) \subseteq G_{\rm ap}^s(\rr d)$ is continuous, we may conclude that the linear map $a(x,D): G_{{\rm ap},C_1}^s(\rr d) \mapsto G_{\rm ap}^s(\rr d)$ is continuous for any $C_1>0$. Finally, it follows from Lemma \ref{inductivecont1} that $a(x,D): G_{\rm ap}^s(\rr d) \mapsto G_{\rm ap}^s(\rr d)$ is continuous.
\end{proof}

\begin{cor}\label{gevreycont1}
If $a \in APS_{\rho,0}^{m,s}(\rr {2d})$ then $a(x,D)$ is continuous on $G_{\rm ap}^s(\rr d)$.
\end{cor}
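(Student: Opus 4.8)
The plan is to reduce the statement of Corollary~\ref{gevreycont1} to the already-established Proposition~\ref{gevreycont2} by exhibiting a symbol $a(x,\xi) \in APS_{\rho,0}^{m,s}(\rr{2d})$ as a special case of an amplitude $\widetilde a(x,y,\xi) \in APS_{\rho,0}^{m,s}(\rr{3d})$ for which the two operators agree. The natural choice is to set $\widetilde a(x,y,\xi) := a(x,\xi)$, ignoring the $y$-variable entirely. One then checks that $\widetilde a \in APS_{\rho,0}^{m,s}(\rr{3d})$ in the sense of Definition~\ref{symbclass3}: since $\partial_y^\gamma \widetilde a = 0$ for $\gamma \neq 0$ and $\partial_y^0 \widetilde a = a$, the estimates \eqref{symbclass3a}, \eqref{symbclass3ap} follow immediately from \eqref{symbclass2}, \eqref{symbclass2p} with the same constants $C,B$ and the same integer $M$ (the factor $(\gamma!)^{s(\rho-\delta)}$ is just $1$ when $\gamma=0$ and the terms vanish otherwise), and the almost periodicity $\widetilde a(\cdot,\cdot,\xi) \in C_{\rm ap}(\rr{2d})$ for each $\xi$ follows from $a(\cdot,\xi) \in C_{\rm ap}(\rr d)$ since a function of $x$ alone that is a.p. in $x$ is a.p. as a function of $(x,y) \in \rr{2d}$.

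Next I would verify that the operator $\widetilde a(x,D)$ defined by the amplitude formula \eqref{extension3} coincides with $a(x,D)$ defined by \eqref{extension2}. This is immediate upon substituting $\widetilde a(x,y,\xi) = a(x,\xi)$ into \eqref{extension3}: the integrand becomes $\chi(\ep y)\chi(\ep \xi) e^{2\pi i \xi\cdot(x-y)} a(x,\xi) f(y)$, which is exactly the integrand of \eqref{extension2}, so the two regularized limits agree for every $f \in C_b^\infty(\rr d)$, and in particular for every $f \in G_{\rm ap}^s(\rr d) \subseteq C_{\rm ap}^\infty(\rr d) \subseteq C_b^\infty(\rr d)$.

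Having established $\widetilde a \in APS_{\rho,0}^{m,s}(\rr{3d})$ and $\widetilde a(x,D) = a(x,D)$ as operators on $G_{\rm ap}^s(\rr d)$, an application of Proposition~\ref{gevreycont2} to $\widetilde a$ yields that $a(x,D)$ is continuous on $G_{\rm ap}^s(\rr d)$, which is the claim. I do not expect any genuine obstacle here: the only point requiring a word of care is that Definition~\ref{symbclass3} imposes estimates uniformly in both $x$ and $y$, but since our amplitude is constant in $y$ this uniformity is trivially inherited from the uniformity in $x$ already built into Definition~\ref{symbclass1}. Thus the corollary is essentially a formal consequence of the proposition, and the proof can be written in a few lines.
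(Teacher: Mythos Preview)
Your proposal is correct and is essentially the same approach as the paper, which states the result as an immediate corollary of Proposition~\ref{gevreycont2} without separate proof: a symbol $a(x,\xi)\in APS_{\rho,0}^{m,s}(\rr{2d})$ is the special case of an amplitude $\widetilde a(x,y,\xi)=a(x,\xi)\in APS_{\rho,0}^{m,s}(\rr{3d})$, and the two operator definitions \eqref{extension2} and \eqref{extension3} then coincide.
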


Proposition \ref{gevreycont2} admits the extension of the domain of pseudodifferential operators $a(x,D)$ by means of duality for $a \in APS_{\rho,0}^{m,s}(\rr {3d})$, as we now explain.
If $a \in APS_{\rho,0}^{m,s}(\rr {3d})$ and $f \in G_{\rm ap}^s (\rr d)$, the function $a(x,D) f \in G_{\rm ap}^s (\rr d)$ is determined by
\begin{equation}\nonumber
\left\{ (a(x,D) f,g)_B = \sum_{\xi \in \rr d} (\wh{a(x,D)f})_\xi \ \overline{\wh g_\xi}, \quad g \in C_{\rm ap} (\rr d) \right\}.
\end{equation}
In fact, it suffices to take $g(x)=e^{2 \pi i x \cdot \xi}$ for all $\xi \in \rr d$, which gives the Fourier coefficients for $a(x,D) f$.
Moreover, \cite[Corollary 4.1]{Shubin4} shows that
$$
(a(x,D) f,g)_B = (f, b(x,D) g)_B, \quad f,g \in G_{\rm ap}^s (\rr d),
$$
where $b(x,D)$ is the formal adjoint of $a(x,D)$, defined by the amplitude $b(x,y,\xi) = \overline{a(y,x,\xi)}$.
In fact, the proof of \cite[Corollary 4.1]{Shubin4} treats symbols $a \in APS_{1,0}^m(\rr {2d})$, and it extends to amplitudes in $APS_{\rho,\delta}^{m,s}(\rr {3d})$.

Hence it is natural to define for $f \in (G_{\rm ap}^s)'(\rr d)$
\begin{equation}\label{duality1}
(a(x,D) f,g) = (f, b(x,D) g), \quad g \in G_{\rm ap}^s (\rr d),
\end{equation}
where $(\cdot,\cdot)$ denotes the duality between $(G_{\rm ap}^s)'$ and $G_{\rm ap}^s$. Proposition \ref{gevreycont2} shows that \eqref{duality1} is well defined, and $a(x,D): (G_{\rm ap}^s)' \mapsto (G_{\rm ap}^s)'$ is continuous with respect to the weak$^*$ topology.
Proposition \ref{dualembedding1} implies that
\begin{equation}\label{apriori1}
a(x,D): W_{s,0}^1(\rr d) \mapsto (G_{\rm ap}^s)'(\rr d)
\end{equation}
is a continuous map.

%%%%%%%%%%%%%%%%%%%%%%%%%%%%%%%%%%%%%%%%%%%%%%%%%%%%%%%%%%%%
\section{Pseudodifferential calculus}\label{sectioncalculus}
%%%%%%%%%%%%%%%%%%%%%%%%%%%%%%%%%%%%%%%%%%%%%%%%%%%%%%%%%%%%

In this section we develop a basic calculus for symbols in the classes $APS_{\rho,\delta}^{m,s}(\rr {2d})$ and operators acting on $G_{\rm ap}^s(\rr d)$. Throughout the section we assume that $s \geq 1$. The following results are adaptations of results in \cite[Chapter 3]{Rodino1}. As a general principle, we formulate almost periodic and global versions of the results in \cite[Chapter 3]{Rodino1}. The latter are local in the sense that estimates hold when the space variable $x$ belongs to a compact set, whereas here we keep track of uniform bounds for $x \in \rr d$. To benefit the reader we give full details for the most part of the proofs.

\begin{defn}\label{asymptoticexpansion1}
A \textit{formal sum} $\sum_{j \geq 0} a_j$ consists of a sequence $(a_j) \subseteq APS_{\rho,\delta}^{m,s}(\rr {2d})$ such that
\begin{equation}\nonumber
\begin{aligned}
|\pdd x \alpha \pdd \xi \beta a_j(x,\xi)|
\leq & \ C^{1+ |\alpha| + |\beta| + j} (j! \alpha!)^{s(\rho-\delta)} \beta! \eabs \xi^{m-\rho |\beta|+\delta |\alpha|-(\rho-\delta)j}, \\
& \alpha,\beta \in \nn d, \quad x \in \rr d, \quad \eabs \xi \geq B (j+|\beta|)^s, \quad j =0,1,\dots,
\end{aligned}
\end{equation}
for some $C,B>0$.
\end{defn}

A symbol $a \in APS_{\rho,\delta}^{m,s}(\rr {2d})$ is identified with the formal sum $\sum_{j \geq 0} a_j$ where $a_0=a$ and $a_j=0$ for $j>0$. Next we define an equivalence relation for formal sums.

\begin{defn}\label{equivalence1}
Given two formal sums $\sum_{j \geq 0} a_j$ and $\sum_{j \geq 0} b_j$,
then we write $\sum_{j \geq 0} a_j \sim \sum_{j \geq 0} b_j$ provided
\begin{equation}\nonumber
\begin{aligned}
\left| \pdd x \alpha \pdd \xi \beta \sum_{j<N} \left( a_j(x,\xi)-b_j(x,\xi) \right) \right|
\leq & \ C^{1+|\alpha|+|\beta|+N} (N! \alpha!)^{s(\rho-\delta)} \beta! \eabs \xi^{m-\rho |\beta|+\delta |\alpha|-(\rho-\delta)N}, \\
& \alpha,\beta \in \nn d, \quad \eabs \xi \geq B (N+|\beta|)^s, \quad N =1,2,\dots,
\end{aligned}
\end{equation}
for some $C,B>0$.
\end{defn}

\begin{lem}\label{formalsumconstruction1}
If $\sum_{j \geq 0} a_j$ is a formal sum, then there exists a symbol $a \in APS_{\rho,\delta}^{m,s}(\rr {2d})$ such that $a \sim \sum_{j \geq 0} a_j$.
\end{lem}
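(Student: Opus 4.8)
The plan is to use the standard Borel-type summation device adapted to the Gevrey setting, following the pattern of \cite[Chapter 3]{Rodino1} but keeping track of uniformity in $x \in \rr d$ and of the almost periodicity. First I would fix a cutoff function $\chi \in C^\infty(\rr d)$ with $\chi(\xi)=0$ for $\eabs{\xi} \leq 1$ and $\chi(\xi)=1$ for $\eabs{\xi} \geq 2$, and choose an increasing sequence of radii $R_j \to \infty$ (to be specified, growing like a suitable power of $j$, essentially $R_j \sim B' j^s$ for a constant $B'$ large relative to the $B$ in Definition~\ref{asymptoticexpansion1}). Then I would set
\begin{equation}\nonumber
a(x,\xi) = \sum_{j \geq 0} \chi(\xi/R_j) \, a_j(x,\xi),
\end{equation}
noting that for each fixed $\xi$ only finitely many terms are nonzero (those with $R_j \leq \eabs{\xi}$), so the sum is locally finite and $a \in C^\infty(\rr{2d})$, and $a(\cdot,\xi) \in C_{\rm ap}(\rr d)$ for every $\xi$ since it is a finite sum of such functions (here the almost periodicity is automatic once local finiteness is established, which is the main place the a.p. structure enters — it requires no extra work beyond that).

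The core of the argument is then two estimates. First, one must check that $a \in APS_{\rho,\delta}^{m,s}(\rr{2d})$, i.e. that \eqref{symbclass2} and \eqref{symbclass2p} hold. For this I would differentiate the series term by term; on the support of $\chi(\xi/R_j)$ one has $\eabs{\xi} \geq R_j$, which together with the choice $R_j \gtrsim j^s$ guarantees that the factor $\eabs{\xi}^{-(\rho-\delta)j}$ in the bound for $\pdd x\alpha\pdd\xi\beta a_j$ from Definition~\ref{asymptoticexpansion1} beats the $C^j (j!)^{s(\rho-\delta)}$ growth, giving a convergent geometric-type series with constants uniform in $x$. One must also account for the derivatives falling on $\chi(\xi/R_j)$, which produce factors $R_j^{-|\cdot|} \eabs{\xi}$-type terms that are harmless because on the support of $\nabla\chi(\cdot/R_j)$ one has $\eabs{\xi} \sim R_j$. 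Second, for the equivalence $a \sim \sum_{j\geq 0} a_j$ one writes, for fixed $N$,
\begin{equation}\nonumber
a(x,\xi) - \sum_{j<N} a_j(x,\xi) = \sum_{j<N}\bigl(\chi(\xi/R_j)-1\bigr) a_j(x,\xi) + \sum_{j \geq N} \chi(\xi/R_j) a_j(x,\xi),
\end{equation}
and estimates the two pieces in the region $\eabs{\xi} \geq B'(N+|\beta|)^s$ (enlarging $B$ if necessary): the first sum vanishes there provided $R_j \leq B'(N+|\beta|)^s$ for all $j<N$, which holds by the choice of $R_j$; the second is handled exactly as in the convergence estimate, the gain $\eabs{\xi}^{-(\rho-\delta)j}$ for $j \geq N$ producing the required $\eabs{\xi}^{m-\rho|\beta|+\delta|\alpha|-(\rho-\delta)N}$ factor with uniform constants.

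The main obstacle I anticipate is purely bookkeeping: one must choose the sequence $R_j$ (and the resulting constant $B$ in Definitions~\ref{asymptoticexpansion1}, \ref{symbclass1}) so that a \emph{single} pair of constants $C,B$ works simultaneously for all the derivative estimates, all $N$, and all $x$ — the Leibniz terms coming from derivatives of $\chi(\xi/R_j)$ and the combinatorics of $(j!\alpha!)^{s(\rho-\delta)}\beta!$ against the denominators must be tracked carefully to extract the clean form of \eqref{symbclass2}. Since $\delta$ may be positive here (only $\rho > \delta$, $s(\rho-\delta)\geq 1$ is assumed), one cannot simply absorb the $\delta|\alpha|$ exponent; but it appears identically on both sides of every estimate, so it causes no genuine difficulty. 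The almost periodicity, by contrast, costs essentially nothing once local finiteness of the defining series is in hand.
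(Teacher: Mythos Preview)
Your overall strategy --- Borel-type summation with cutoffs supported in $\{\eabs\xi \gtrsim j^s\}$, local finiteness giving almost periodicity for free, and the two-piece splitting for the equivalence --- is exactly the right shape. But there is a genuine gap in the treatment of the $\xi$-derivatives hitting the cutoff, and it is not just bookkeeping.

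You propose a \emph{fixed} cutoff $\chi$ and set $\varphi_j(\xi)=\chi(\xi/R_j)$. Then $\partial_\xi^\gamma\varphi_j(\xi)=R_j^{-|\gamma|}(\partial^\gamma\chi)(\xi/R_j)$, and you dismiss the factor $\|\partial^\gamma\chi\|_\infty$ as ``harmless''. It is not. In the region $\eabs\xi\geq B'|\beta|^s$ where the symbol estimate \eqref{symbclass2} must hold, the order $|\beta|$ of $\xi$-differentiation can be as large as $(\eabs\xi/B')^{1/s}$; on the support of $\nabla\chi(\cdot/R_j)$ one has $\eabs\xi\sim R_j\sim j^s$, so you need control of $\partial^\gamma\chi$ for $|\gamma|$ up to a constant times $j$, with bounds that are at worst exponential in $|\gamma|$. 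For a generic $\chi\in C^\infty$ the constants $\|\partial^\gamma\chi\|_\infty$ grow without any a priori rate. Taking $\chi\in G^\sigma$ for some $\sigma>1$ does not save you: the resulting extra factor $(\gamma!)^{\sigma-1}$, with $|\gamma|\lesssim j$, produces $j^{cj}$-type growth that the geometric gain $(B'')^{-(\rho-\delta)j}$ cannot absorb. And for $s=1$ (which is allowed, e.g.\ $\rho=1$, $\delta=0$) there is no compactly supported analytic cutoff at all.

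The remedy, which is what the paper does (following \cite[Theorem 3.2.14]{Rodino1}), is to use \emph{$j$-dependent} cutoffs $\varphi_j$ built by the Ehrenpreis-type construction: for any $R>0$ and integer $K>0$ one takes $\varphi_j$ with $\varphi_j=0$ for $|\xi|<2R(j+1)^s$, $\varphi_j=1$ for $|\xi|>3R(j+1)^s$, and
\[
|\partial^\gamma\varphi_j(\xi)|\leq C_K^{1+|\gamma|}\bigl(R(j+1)^{s-1}\bigr)^{-|\gamma|}\qquad\text{for }|\gamma|\leq K(j+1),
\]
with $C_K$ independent of $j,\gamma,R$. The point is that only finitely many derivatives (up to order $K(j+1)$) are controlled, but with a constant independent of $\gamma$ --- no factorial loss. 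Since the Leibniz terms you encounter have $|\gamma|\lesssim j$ anyway, this is exactly enough, and with $K$ and $R$ chosen large the rest of your outline goes through. This ``almost analytic cutoff'' device is the essential analytic ingredient that distinguishes Gevrey/analytic symbol calculus from the $C^\infty$ case; a single scaled cutoff cannot replace it.
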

\begin{proof}
The proof is similar to \cite[Theorem 3.2.14]{Rodino1} so we only give a sketch.
The symbol $a$ is constructed as
\begin{equation}\nonumber
a(x,\xi) = \sum_{j=0}^\infty \varphi_j(\xi) a_j(x,\xi)
\end{equation}
where $(\varphi_j) \subseteq C_c^\infty(\rr d)$ is a sequence of cut-off functions that satisfy $0 \leq \varphi_j \leq 1$ and, for any given $R>0$ and integer $K>0$,
\begin{equation}\nonumber
\begin{aligned}
& \varphi_j(\xi)=0 \quad \mbox{if} \quad |\xi|<2 R (j+1)^s, \quad \varphi_j(\xi)=1 \quad \mbox{if} \quad |\xi|>3 R (j+1)^s, \\
& |\pd \gamma \varphi_j(\xi)| \leq C_K^{1+|\gamma|} (R (j+1)^{s-1})^{-|\gamma|} \quad \mbox{if} \quad |\gamma| \leq K (j+1),
\end{aligned}
\end{equation}
where $C_K>0$ depends on $K$ but not on $j,R,\gamma$. The proof of \cite[Theorem 3.2.14]{Rodino1} then shows that the result holds provided $K$ and $R$ are chosen sufficiently large.
\end{proof}

\begin{defn}
An operator $a(x,D)$ is said to be regularizing provided
\begin{equation}\nonumber
a(x,D): W_{s,0}^1(\rr d) \mapsto G_{\rm ap}^s(\rr d)
\end{equation}
continuously.
\end{defn}

\begin{prop}\label{zeroregularizing1}
If $a \in APS_{\rho,\delta}^{m,s}(\rr {2d})$ and $a \sim 0$ then $a(x,D)$ is regularizing.
\end{prop}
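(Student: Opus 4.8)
The plan is to reduce the statement to a single weighted estimate for $a(x,D)$ applied to the pure frequencies $e_\xi(x)=e^{2\pi i x\cdot\xi}$. A direct computation with the limit \eqref{extension2} shows $a(x,D)e_\xi = e_\xi\,a(\cdot,\xi)$, so for a trigonometric polynomial $f=\sum_\xi \wh f_\xi e_\xi$ one has $a(x,D)f=\sum_\xi \wh f_\xi\, e_\xi\, a(\cdot,\xi)$ (a finite sum). First I would prove that there are constants $C',M,c>0$, independent of $\xi$, such that
\[
\| e_\xi\, a(\cdot,\xi) \|_{s,C'} \le M \exp\bigl(-c|\xi|^{1/s}\bigr), \qquad \xi\in\rr d .
\]
Granted this, for $f\in TP(\rr d)$ we get $\|a(x,D)f\|_{s,C'}\le \sum_\xi |\wh f_\xi|\,\| e_\xi a(\cdot,\xi)\|_{s,C'}\le M\|f\|_{W_{s,c}^1}$. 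Since $TP(\rr d)$ is dense in the Banach space $W_{s,c}^1(\rr d)$ and $G_{{\rm ap},C'}^s(\rr d)$ is complete, $a(x,D)$ extends uniquely to a continuous map $W_{s,c}^1(\rr d)\to G_{{\rm ap},C'}^s(\rr d)$, and composing with the continuous inclusions $W_{s,0}^1(\rr d)\subseteq W_{s,c}^1(\rr d)$ and $G_{{\rm ap},C'}^s(\rr d)\subseteq G_{\rm ap}^s(\rr d)$ shows that $a(x,D)$ is regularizing.

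For the weighted estimate I would first unwind $a\sim 0$. Since $a$ is identified with the formal sum $a_0=a$, $a_j=0$ for $j\ge 1$, Definition \ref{equivalence1} (with vanishing $\xi$-derivative index) gives, for every integer $N\ge 1$,
\[
|\pdd x\gamma a(x,\xi)|\le C^{1+|\gamma|+N}(N!\,\gamma!)^{s(\rho-\delta)}\eabs\xi^{m+\delta|\gamma|-(\rho-\delta)N},\qquad x\in\rr d,\quad \eabs\xi\ge BN^s .
\]
By Leibniz' rule $\pd\alpha(e_\xi a(\cdot,\xi))=\sum_{\gamma\le\alpha}\binom\alpha\gamma(2\pi i\xi)^{\alpha-\gamma}e_\xi\,\pdd x\gamma a(\cdot,\xi)$, so it suffices to bound $(2\pi|\xi|)^{|\alpha|-|\gamma|}|\pdd x\gamma a(x,\xi)|$ by $C_1^{1+|\alpha|}(\alpha!)^s\exp(-c|\xi|^{1/s})$ uniformly in $\gamma\le\alpha$ and $x$, and then to sum using $\sum_{\gamma\le\alpha}\binom\alpha\gamma=2^{|\alpha|}$. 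For bounded frequencies $\eabs\xi\le R$ the plain symbol estimate \eqref{symbclass2} with no $\xi$-derivative already yields this (the exponential factor being harmless). For $\eabs\xi\ge R$ with $R$ large I would apply the displayed estimate with the balanced choice $N=N(\xi):=\lfloor\eta\,\eabs\xi^{1/s}\rfloor$, where $\eta>0$ is fixed so small that $\eta\le B^{-1/s}$ (guaranteeing $\eabs\xi\ge BN^s$) and $C\eta^{s(\rho-\delta)}\le e^{-1}$; then $N!\le N^N\le(\eta\,\eabs\xi^{1/s})^N$ gives
\[
C^N(N!)^{s(\rho-\delta)}\eabs\xi^{-(\rho-\delta)N}\le \bigl(C\eta^{s(\rho-\delta)}\bigr)^N\le e^{-N}\le e\,\exp\bigl(-\eta\,\eabs\xi^{1/s}\bigr).
\]

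It remains to dominate the surviving factors $\eabs\xi^{m+\delta|\gamma|}$ and $(2\pi|\xi|)^{|\alpha|-|\gamma|}$ at the cost of an arbitrarily small exponential loss. Using $t^n\le n!\,e^t$ (as in \eqref{exponentialgevreybound0}) I would, for a small parameter $\lambda>0$, estimate $(2\pi|\xi|)^{|\alpha|-|\gamma|}\le \tilde C_\lambda^{|\alpha|}((\alpha-\gamma)!)^s\exp(\lambda|\xi|^{1/s})$, $\eabs\xi^{\delta|\gamma|}\le C_\lambda^{|\gamma|}(\gamma!)^{s\delta}\exp(\lambda\eabs\xi^{1/s})$ (after $|\gamma|!\le d^{|\gamma|}\gamma!$), and $\eabs\xi^m\le C_\lambda\exp(\lambda\eabs\xi^{1/s})$. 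Multiplying these with the previous display and using the crucial cancellation $((\alpha-\gamma)!)^s(\gamma!)^{s(\rho-\delta)}(\gamma!)^{s\delta}=((\alpha-\gamma)!)^s(\gamma!)^{s\rho}\le((\alpha-\gamma)!\,\gamma!)^s\le(\alpha!)^s$ — valid because $\rho\le 1$ — one obtains $(2\pi|\xi|)^{|\alpha|-|\gamma|}|\pdd x\gamma a(x,\xi)|\le C_2^{1+|\alpha|}(\alpha!)^s\exp\bigl((3\lambda-\eta)\,\eabs\xi^{1/s}\bigr)$; taking $\lambda=\eta/6$ makes the exponent at most $-(\eta/2)|\xi|^{1/s}$. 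Summing over $\gamma\le\alpha$ and merging with the bounded-frequency case gives the weighted estimate with $c=\eta/2$, and hence the proposition.

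The point I expect to require most care is this last balancing: extracting genuine $\exp(-c|\xi|^{1/s})$ decay from the asymptotic condition $a\sim 0$ via the optimal choice of $N$, while simultaneously absorbing the factor $((\alpha-\gamma)!)^s$ produced by differentiating $e_\xi$ and, when $\delta>0$, the polynomial growth $\eabs\xi^{\delta|\gamma|}$ in the symbol bound, so that the resulting $x$-derivative growth is exactly $(\alpha!)^s$ — the Gevrey-$s$ rate required for membership in $G_{\rm ap}^s(\rr d)$. This is where the standing hypotheses $\rho\le 1$, $\delta<\rho$ and $s(\rho-\delta)\ge 1$, together with the freedom to take $N$ (hence $\eta$) and the auxiliary parameter $\lambda$ small, are genuinely used; and, unlike in the local calculus of \cite{Rodino1}, all bounds must be kept uniform in $x\in\rr d$, which here costs nothing because every symbol estimate in Definition \ref{symbclass1} and Definition \ref{equivalence1} is already uniform in $x$.
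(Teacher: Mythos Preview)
Your proof is correct and follows essentially the same route as the paper: both exploit $a(x,D)e_\xi=e_\xi\,a(\cdot,\xi)$, extract exponential decay $\exp(-c|\xi|^{1/s})$ from the condition $a\sim 0$ by optimizing over $N$, absorb the remaining polynomial factors $|\xi|^{|\alpha-\gamma|}$ and $\eabs\xi^{\delta|\gamma|}$ into small exponential losses via $t^n\le n!\,e^t$, and then pass from $TP(\rr d)$ to $W_{s,0}^1(\rr d)$ by density. The only differences are organizational: you package the intermediate estimate as a bound on $\|e_\xi a(\cdot,\xi)\|_{s,C'}$ and carry out the optimization over $N$ by hand, whereas the paper quotes \cite[Lemma 3.2.4]{Rodino1} for that step and works directly with $\pd\alpha(a(x,D)f)$.
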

\begin{proof}
According to Definition \ref{equivalence1} we have
\begin{equation}\label{symbolestimate1}
\begin{aligned}
\left|\pdd x \beta  a(x,\xi) \right|
\leq & \ C^{1 + |\beta| + N} (N! \beta!)^{s(\rho-\delta)} \eabs \xi^{m  + \delta |\beta| - (\rho-\delta)N}, \\
& \beta \in \nn d, \quad \eabs \xi \geq B N^s, \quad N =1,2,\dots.
\end{aligned}
\end{equation}
According to \cite[Lemma 3.2.4]{Rodino1} we have
\begin{equation}\label{infimumestimate1}
\inf_{0 \leq N \leq B^{-1/s} \eabs{\xi}^{1/s}} C^N (N!)^{s(\rho-\delta)} \eabs{\xi}^{-(\rho-\delta)N }
\leq C_1 \exp\left( -3 \ep |\xi|^{1/s} \right)
\end{equation}
for some $C_1, \ep>0$.
Moreover, for $t \geq 0$ and $n \in \no$ we have for any $\theta>0$
$$
t^n = \left(\frac{\theta}{s} \right)^{- s n} \left( \frac{\theta}{s} \ t^{1/s} \right)^{n s}
\leq (\theta/s)^{- s n} (n!)^{s} \exp \left( \theta t^{1/s} \right),
$$
and hence
\begin{align}
& (\beta!)^{-s \delta} \eabs \xi^{\delta |\beta|} \leq C_{\theta}^{1+|\beta|} \exp\left( \theta |\xi|^{1/s} \right), \label{factorialestimate1} \\
& ((\alpha-\beta)!)^{-s} |\xi|^{|\alpha-\beta|} \leq C_{\theta}^{|\alpha - \beta|} \exp\left( \theta |\xi|^{1/s} \right), \quad \xi \in \rr d, \label{factorialestimate1b}
\end{align}
for some $C_\theta>0$. Inserting \eqref{infimumestimate1}, \eqref{factorialestimate1} and $\eabs \xi^{m} \leq D_\theta \exp\left( \theta |\xi|^{1/s} \right)$, where $D_\theta>0$, into \eqref{symbolestimate1} we obtain
\begin{equation}\label{symbolestimate2}
\left|\pdd x \beta  a(x,\xi) \right|
\leq C^{1+|\beta|} (\beta!)^ s \exp\left( - 2 \ep |\xi|^{1/s} \right)
\end{equation}
for some $C>0$, provided $\theta \leq \ep/2$.

Let $f \in TP(\rr d)$, i.e. $f(x) = \sum_{\xi \in \rr d} e^{2 \pi i \xi \cdot x} \wh f_\xi$ where the sum is finite.
Then we have (see e.g. \cite[Lemma 4]{Wahlberg1})
\begin{equation}\nonumber
a(x,D) f(x) = \sum_{\xi \in \rr d} e^{2 \pi i \xi \cdot x} a(x,\xi) \wh f_\xi \in C_{\rm ap}^\infty(\rr d),
\end{equation}
which gives, using \eqref{factorialestimate1b} and \eqref{symbolestimate2},
\begin{equation}\nonumber
\begin{aligned}
& \left| \pdd x \alpha a(x,D) f(x) \right|
= \left| \sum_{\xi \in \rr d} \sum_{\beta \leq \alpha} \binom{\alpha}{\beta} (2 \pi i \xi)^{\alpha-\beta} e^{2 \pi i \xi \cdot x} \pdd x \beta a(x,\xi) \wh f_\xi \right| \\
& \leq \sum_{\xi \in \rr d} \sum_{\beta \leq \alpha} | \wh f_\xi | \frac{(\alpha!)^s}{(\beta!)^s ((\alpha-\beta)!)^s} (2\pi)^{|\alpha-\beta|} |\xi|^{|\alpha-\beta|} C^{1+|\beta|} (\beta!)^ s \exp\left( - 2 \ep |\xi|^{1/s} \right) \\
& \leq C_1^{1 + |\alpha|} (\alpha!)^s \sum_{\xi \in \rr d} | \wh f_\xi | \exp\left( -\ep |\xi|^{1/s} \right) \\
& = C_1^{1 + |\alpha|} (\alpha!)^s \| f \|_{W_{s,\ep}^1(\rr d)}, \quad x \in \rr d,
\end{aligned}
\end{equation}
for some $C_1>0$. Since $W_{s,\ep}^1$ and $G_{{\rm ap},C_1}^s$ are a Banach spaces, and $TP(\rr d)$ is dense in $W_{s,\ep}^1$, $a(x,D)$ extends by density to a continuous map
$$
a(x,D): \ W_{s,\ep}^1 (\rr d) \mapsto G_{{\rm ap},C_1}^s (\rr d).
$$
Finally, since the inclusions $W_{s,0}^1 \subseteq W_{s,\ep}^1$ and $G_{{\rm ap},C_1}^s \subseteq G_{\rm ap}^s$ are continuous, the result follows.
\end{proof}

The next result corresponds to \cite[Theorem 3.2.24]{Rodino1}.
It gives a reformulation of
an operator with amplitude in $APS_{\rho,\delta}^{m,s}(\rr {3d})$ as an operator with symbol in $APS_{\rho,\delta}^{m,s}(\rr {2d})$, constructed from a formal sum based on the original amplitude,
modulo a regularizing operator.
We need the transpose $^t a(x,D)$ of an operator with amplitude $a \in APS_{\rho,\delta}^{m,s}(\rr {3d})$, which is defined by $^t a(x,D)=c(x,D)$ where $c(x,y,\xi)=a(y,x,-\xi)$, and satisfies
$$
( a(x,D) f, \overline{g} )_B = ( f, \overline{ c(x,D) g} )_B, \quad f,g \in G_{\rm ap}^s(\rr d).
$$

\begin{prop}\label{threevariablesasymp1}
If $a \in APS_{\rho,\delta}^{m,s}(\rr {3d})$ then $a(x,D)=b(x,D)+R$ where $b \sim \sum_{j \geq 0} a_j \in APS_{\rho,\delta}^{m,s}(\rr {2d})$,
\begin{equation}\nonumber
a_j(x,\xi) = (2 \pi i)^{-j} \sum_{|\alpha|=j} (\alpha!)^{-1} \pdd y \alpha \pdd \xi \alpha a(x,y,\xi) \big|_{y=x},
\end{equation}
and $R$, $^t R$ are regularizing.
\end{prop}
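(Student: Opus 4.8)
The plan is to follow the classical argument for reducing an amplitude to a symbol (as in \cite[Theorem 3.2.24]{Rodino1}), but keeping all constants uniform in $x$ and inserting the almost periodicity bookkeeping. First I would introduce, for each $\xi$, the phase-space translate and write, for $f \in C_{\rm ap}^\infty(\rr d)$ sufficiently nice (e.g. $f \in TP(\rr d)$, extending by continuity via Proposition \ref{gevreycont2}),
$$
a(x,D) f(x) = \lim_{\ep \to +0} \iint e^{2\pi i \xi\cdot(x-y)} \chi(\ep y)\chi(\ep\xi) a(x,y,\xi) f(y)\, dy\, d\xi.
$$
Taylor expanding $a(x,y,\xi)$ in the second variable around $y=x$ to order $N$,
$$
a(x,y,\xi) = \sum_{|\alpha|<N} \frac{(y-x)^\alpha}{\alpha!} \pdd y \alpha a(x,x,\xi) + R_N(x,y,\xi),
$$
and using $(y-x)^\alpha e^{2\pi i\xi\cdot(x-y)} = (-2\pi i)^{-|\alpha|} \partial_\xi^\alpha e^{2\pi i\xi\cdot(x-y)}$ followed by integration by parts in $\xi$, the leading terms become $\sum_{j<N} a_j(x,D) f(x)$ with $a_j$ as stated. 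The remainder term, after the same integration by parts, is an operator with amplitude
$$
r_N(x,y,\xi) = N \sum_{|\alpha|=N} \frac{1}{\alpha!} \int_0^1 (1-t)^{N-1} (2\pi i)^{-N} \pdd y \alpha \pdd \xi \alpha a(x,x+t(y-x),\xi)\, dt,
$$
which I would estimate directly: since $a \in APS_{\rho,\delta}^{m,s}(\rr{3d})$, the bound \eqref{symbclass3a} gives $|\pdd y \alpha \pdd \xi \alpha a| \leq C^{1+2N}(N!)^{s(\rho-\delta)} N! \eabs\xi^{m-\rho N + \delta N}$ uniformly in $x,y$ for $\eabs\xi \geq BN^s$, and Stirling together with $\sum_{|\alpha|=N} 1 \leq d^N$ yields exactly the estimate required for $r_N$ to witness $b \sim \sum_{j\geq 0} a_j$ once we absorb $r_N$ into the tail. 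Here one uses the standard trick: choose $b$ to be the symbol supplied by Lemma \ref{formalsumconstruction1} with $b \sim \sum_{j\geq 0} a_j$, and then $a(x,D) - b(x,D)$ has amplitude (resp.\ symbol) equivalent to $0$ in the sense of Definition \ref{equivalence1}, so one must only check that such an operator is regularizing.

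The decomposition above is exact at each finite order $N$: $a(x,D) = \sum_{j<N} a_j(x,D) + r_N(x,D)$. Setting $R := a(x,D) - b(x,D)$, the content of the claim is that $R$ is regularizing. For this I would argue as follows. Fix $N$. Then $R = (a(x,D) - \sum_{j<N} a_j(x,D)) - (b(x,D) - \sum_{j<N} a_j(x,D)) = r_N(x,D) - b_{<N}^c(x,D)$ where $b - \sum_{j<N} a_j$ satisfies, by $b \sim \sum_{j\geq 0} a_j$, the estimate in Definition \ref{equivalence1} with $N$ in place of the summation index. Both pieces thus have symbols/amplitudes obeying a bound of the form $C^{1+|\alpha|+N}(N!)^{s(\rho-\delta)}\alpha! \eabs\xi^{m' - (\rho-\delta)N}$ for $\eabs\xi \geq B(N+|\alpha|)^s$ (for $r_N$ this is a bound on the amplitude in all three variables, which still defines a continuous operator on $G_{\rm ap}^s$ by Proposition \ref{gevreycont2}). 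Now I would run the argument from the proof of Proposition \ref{zeroregularizing1}: taking the infimum over admissible $N$ via \cite[Lemma 3.2.4]{Rodino1} as in \eqref{infimumestimate1} converts the factor $C^N(N!)^{s(\rho-\delta)}\eabs\xi^{-(\rho-\delta)N}$ into $C_1 \exp(-3\ep|\xi|^{1/s})$, and combining with \eqref{factorialestimate1}--\eqref{factorialestimate1b} and $\eabs\xi^{m'} \leq D_\theta \exp(\theta|\xi|^{1/s})$ gives a symbol bound like $C^{1+|\beta|}(\beta!)^s \exp(-2\ep|\xi|^{1/s})$ — which is precisely what was used there to conclude that the operator maps $W_{s,\ep}^1(\rr d)$ into $G_{{\rm ap},C_1}^s(\rr d)$ continuously, hence is regularizing. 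Some care is needed because $r_N$ is a genuine three-variable amplitude, not a symbol; but the proof of Proposition \ref{zeroregularizing1}, carried out on $f \in TP(\rr d)$ via the explicit formula $a(x,D)f(x) = \sum_\xi e^{2\pi i\xi\cdot x} a(x,\xi)\wh f_\xi$, adapts: for an amplitude one applies \eqref{psdopartint3} together with the decay of $r_N$ in $\xi$.

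Finally, the transpose statement. Since $^t a(x,D) = c(x,D)$ with $c(x,y,\xi) = a(y,x,-\xi)$, and since the symbol classes $APS_{\rho,\delta}^{m,s}(\rr{3d})$ and the equivalence relation are manifestly invariant under the swap $(x,y)\mapsto(y,x)$ and the reflection $\xi\mapsto -\xi$, applying the already-proved reduction to $c$ gives $c(x,D) = b'(x,D) + R'$ with $R'$ regularizing. Tracking the formula through the reflection, $b'$ is equivalent to $\sum_{j\geq 0} a_j'$ with $a_j'(x,\xi) = (2\pi i)^{-j}\sum_{|\alpha|=j}(\alpha!)^{-1}\pdd y\alpha\pdd\xi\alpha c(x,y,\xi)|_{y=x}$, and one checks $^t R = R'$ up to a regularizing error, using that $^t(b(x,D))$ differs from $b'(x,D)$ by an operator with symbol $\sim 0$ (the standard adjoint-symbol computation, done with uniform constants exactly as above). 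I expect the main obstacle to be not any single estimate but the uniform-in-$x$ bookkeeping: in the local theory of \cite{Rodino1} the constants $C, B$ in the Taylor remainder $r_N$ are allowed to depend on the compact set containing $x$, whereas here one must verify that the hypotheses \eqref{symbclass3a}--\eqref{symbclass3ap}, which hold uniformly in $x,y \in \rr d$, propagate through the Taylor expansion and the stationary-phase-free integration by parts without any loss of uniformity — and that the limit $\ep \to +0$ in \eqref{extension3} commutes with the finite Taylor expansion, which follows from the absolute convergence established in \eqref{psdopartint3}. The almost periodicity of each $a_j(\cdot,\xi)$ and of the remainder is immediate since $a(\cdot,\cdot,\xi) \in C_{\rm ap}(\rr{2d})$ is preserved under $x$- and $y$-differentiation and restriction to the diagonal.
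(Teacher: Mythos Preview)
Your Taylor-expansion route is a genuine alternative to the paper's argument, and the main estimates you describe can be made to work, but the paper proceeds quite differently. Rather than carrying the $N$-dependent remainder amplitude $r_N$ and optimizing over $N$ afterwards, the paper studies directly the Schwartz kernel $K(x,y)$ of $R = a(x,D) - b(x,D)$ and proves the uniform bound
\[
|(x-y)^\gamma \pdd x \alpha \pdd y \beta K(x,y)| \leq C_\gamma\, C^{1+|\alpha|+|\beta|} (\alpha!\beta!)^s, \qquad |\gamma|\le d+1,
\]
via a separate technical lemma that uses a dyadic partition $\{\psi_N\}$ of $\xi$-space (with estimates $|\pd\beta\psi_N|\le C^{1+|\beta|}(RN^{s-1})^{-|\beta|}$ for $|\beta|\le 2N$), treating $|x-y|<\ep$ and $|x-y|\ge\ep$ separately. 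From this kernel bound one reads off $|\mathscr F_2 \pdd x\alpha K(x,\xi)|\le C^{1+|\alpha|}(\alpha!)^s\exp(-\ep|\xi|^{1/s})$ and concludes on $TP(\rr d)$ exactly as in Proposition~\ref{zeroregularizing1}. The payoff is that the transpose comes for free: since $^tR$ has kernel $K(y,x)$ and the kernel estimate is symmetric in $(x,y)$, $^tR$ is regularizing by the identical computation. Your transpose argument, by contrast, passes through a second reduction for $c(x,y,\xi)=a(y,x,-\xi)$ and then needs the further claim that $^t(b(x,D))$ and $b'(x,D)$ differ by a regularizing operator, which is itself essentially an instance of the proposition you are proving.

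One point in your outline deserves more care. In the infimum-over-$N$ step you are not in the situation of Proposition~\ref{zeroregularizing1}, where a \emph{single} symbol satisfies the $N$-indexed family of bounds; here both $r_N$ and $b-\sum_{j<N}a_j$ change with $N$. This is not fatal, since $R$ itself is independent of $N$ and one may estimate $|\pd\alpha Rf(x)|$ via the $N$-th representation and then choose $N=N(\eta)$ for each Fourier term $e_\eta$ of $f\in TP(\rr d)$; but one must actually carry out the amplitude estimate for $r_N(x,D)e_\eta$ through the integration-by-parts formula \eqref{psdopartint3} and check that the resulting constants are uniform in $x$ and have the correct $N$-dependence, rather than simply invoking Proposition~\ref{zeroregularizing1}. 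Also note that your appeal to Proposition~\ref{gevreycont2} restricts you to $\delta=0$, whereas the paper's kernel argument covers the full range $\rho>\delta$, $s(\rho-\delta)\ge1$.
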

\begin{proof}
The Schwartz kernel of a linear operator $L: \mathscr S(\rr d) \mapsto \mathscr S'(\rr d)$ is the tempered distribution $K \in \mathscr S'(\rr {2d})$ that satisfies (cf. \cite{Hormander1})
$$
\langle Lf,g \rangle = \langle K,g \otimes f \rangle, \quad f,g \in \mathscr S (\rr d).
$$
The kernel $K$ of $a(x,D)$ for $a \in APS_{\rho,\delta}^{m,s}(\rr {3d})$ is
\begin{equation}\nonumber
K(x,y) = \int_{\rr d} e^{2 \pi i \xi \cdot (x-y)} a(x,y,\xi) {\,} d\xi
\end{equation}
when $m<-d$ (so that the integral makes sense), and by the corresponding partial Fourier transformation taken in $\mathscr S'$ otherwise.
According to the following Lemma \ref{kernelgevreyestimate1}, the kernel $K$ corresponding to $R=a(x,D)-b(x,D)$ satisfies
\begin{equation}\label{kernelestimate1}
|(x-y)^\gamma \pdd x \alpha \pdd y \beta K(x,y)| \leq C_\gamma C^{1 + |\alpha| + |\beta|} (\alpha ! \beta !)^s,
\end{equation}
for some constants $C,C_\gamma>0$, for all $x,y \in \rr d$ and all $\alpha,\beta,\gamma \in \nn d$
such that $|\gamma| \leq d+1$.

Next we define $L(x,y) = K(x,x-y)$.
We have
\begin{equation}\label{coordinatederivative1}
\pdd x \alpha \pdd y \beta L(x,y)
= \sum_{\sigma \leq \alpha} \binom{\alpha}{\sigma} (-1)^{|\beta|} \partial_1^{\alpha-\sigma} \partial_2^{\beta+\sigma} K(x,x-y).
\end{equation}
The estimate \eqref{kernelestimate1} and
$(\beta+\sigma) ! \leq 2^{|\beta|+|\sigma|} \beta! \sigma !$
give
\begin{equation}\label{kernelestimate2}
|y^\gamma \pdd x \alpha \pdd y \beta L(x,y)| \leq C_\gamma C^{1 + |\alpha| + |\beta|} (\alpha ! \beta !)^s, \quad x,y \in \rr d, \quad \alpha,\beta,\gamma \in \nn d, \quad |\gamma| \leq d+1,
\end{equation}
for some constants $C,C_\gamma>0$.
Hence we have
\begin{equation}\nonumber
\left| \pdd x \alpha \pdd y \beta L(x,y) \right|
\leq C^{1 + |\alpha| + |\beta|} (\alpha ! \beta !)^s \eabs{y}^{-d-1}
\end{equation}
where $C>0$, which gives the following result for the partial Fourier transform of $L$ in the second variable:
\begin{equation}\nonumber
\begin{aligned}
\left|  \xi^{\beta}(\mathscr F_{2} \pdd x \alpha  L) (x,\xi) \right|
& = \left| (2 \pi i)^{-|\beta|} \mathscr F_{2,y \rightarrow \xi} \left( \pdd x \alpha \pdd y \beta L (x,y) \right) \right| \\
& = (2 \pi )^{-|\beta|} \left| \int_{\rr d} \pdd x \alpha \pdd y \beta  L(x,y) e^{- 2 \pi i y \cdot \xi} dy \right| \\
& \leq C^{1 + |\alpha| + |\beta|} (\alpha ! \beta !)^s, \quad \alpha,\beta \in \nn d, \quad C>0, \quad x,\xi \in \rr d.
\end{aligned}
\end{equation}
Since $K(x,y)=L(x,x-y)$ we may interchange the roles of $K$ and $L$ in \eqref{coordinatederivative1}, and we obtain
\begin{equation}\nonumber
\begin{aligned}
\left| \xi^{\beta} (\mathscr F_2 \pdd x \alpha  K)(x,\xi) \right|
& = \left| (2 \pi i )^{-|\beta|} (\mathscr F_2 \pdd x \alpha \pdd y \beta K)(x,\xi) \right| \\
& \leq (2 \pi)^{-|\beta|} \sum_{\sigma \leq \alpha} \binom{\alpha}{\sigma} \left| \mathscr F_2  \partial_1^{\alpha-\sigma} \partial_2^{\beta+\sigma} L  (x,-\xi) \right| \\
& = (2 \pi)^{-|\beta|} \sum_{\sigma \leq \alpha} \binom{\alpha}{\sigma} \left| (-2 \pi i \xi)^{\beta+\sigma} \mathscr F_2  \partial_1^{\alpha-\sigma} L  (x,-\xi) \right| \\
& \leq (2 \pi)^{|\alpha|} \sum_{\sigma \leq \alpha} \binom{\alpha}{\sigma}
C^{1+|\alpha-\sigma|+|\beta+\sigma|} ((\alpha-\sigma) ! (\beta+\sigma) !)^s \\
& \leq C_1^{1 + |\alpha| + |\beta|} (\alpha ! \beta !)^s, \quad x,\xi \in \rr d,
\end{aligned}
\end{equation}
where $C_1>0$.

According to the results in \cite[Chapter IV, Section 2.1]{Gelfand1}, this implies the estimate
\begin{equation}\label{partialfourierkernel1}
\left| \mathscr F_{2} \pdd x \alpha  K (x,\xi) \right|
\leq  C^{1 + |\alpha|} (\alpha !)^s \exp(- \ep |\xi|^{1/s})
\end{equation}
for some $C>0$ and some $\ep>0$.

Finally, let $f \in TP(\rr d)$ which means that $f$ is a finite sum of the form
$$
f(y) = \sum_{\xi \in \rr d} \wh f_\xi \ e^{2 \pi i \xi \cdot y}.
$$
Using \eqref{partialfourierkernel1} we obtain
\begin{equation}\nonumber
\begin{aligned}
\left| \pd \alpha Rf (x) \right| & = \left| \int_{\rr d} \pdd x \alpha K(x,y) f(y) dy \right|
= \left|  \sum_{\xi \in \rr d} \wh f_\xi \mathscr F_2 \pdd x \alpha K(x,-\xi) \right| \\
& \leq C^{1 + |\alpha|} (\alpha !)^s \sum_{\xi \in \rr d} \exp(- \ep |\xi|^{1/s}) |\wh f_\xi| \\
& = C^{1 + |\alpha|} (\alpha !)^s \| f \|_{W_{s,\ep}^1}, \quad x \in \rr d, \quad C>0.
\end{aligned}
\end{equation}
Since it is clear that $R f = (a(x,D)-b(x,D)) f$ belongs to $C_{\rm ap}(\rr d)$, this proves that $R$ is regularizing. Similarly, it follows from \eqref{kernelestimate1} that also $^t R$ is regularizing, since its kernel is $^t K(x,y)=K(y,x)$.
\end{proof}

It remains to state and prove the lemma upon which Proposition \ref{threevariablesasymp1} is based.

\begin{lem}\label{kernelgevreyestimate1}
Suppose $a \in APS_{\rho,\delta}^{m,s}(\rr {3d})$,
\begin{equation}\nonumber
a_j(x,\xi) = (2 \pi i)^{-j} \sum_{|\alpha|=j} (\alpha!)^{-1} \pdd y \alpha \pdd \xi \alpha a(x,y,\xi) \big|_{y=x},
\end{equation}
$b \sim \sum_{j \geq 0} a_j$.
Then the kernel $K$ of $a(x,D)-b(x,D)$ satisfies
\begin{equation}\label{kernelestimate3}
|(x-y)^\gamma \pdd x \alpha \pdd y \beta K(x,y)| \leq C_\gamma C^{1 + |\alpha| + |\beta|} (\alpha ! \beta !)^s,
\end{equation}
for some constants $C,C_\gamma>0$, for all $x,y \in \rr d$ and all $\alpha,\beta,\gamma \in \nn d$
such that $|\gamma| \leq d+1$.
\end{lem}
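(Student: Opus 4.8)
The strategy follows the classical scheme of \cite[Chapter 3.2]{Rodino1}, but with all estimates made uniform in the space variable $x \in \rr d$. The kernel of $a(x,D)$ is $K_a(x,y) = \int_{\rr d} e^{2 \pi i \xi \cdot (x-y)} a(x,y,\xi) \, d\xi$ (interpreted via partial Fourier transform when $m \geq -d$), and similarly the kernel of $b(x,D)$ is $K_b(x,y) = \int_{\rr d} e^{2 \pi i \xi \cdot (x-y)} b(x,\xi) \, d\xi$. So $K = K_a - K_b$, and it suffices to estimate $(x-y)^\gamma \pdd x \alpha \pdd y \beta K(x,y)$ for $|\gamma| \leq d+1$. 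The key device is the Taylor expansion of $a(x,y,\xi)$ in the variable $y$ around $y=x$: writing $a(x,y,\xi) = \sum_{|\mu|<N} \frac{(y-x)^\mu}{\mu!} \pdd y \mu a(x,x,\xi) + r_N(x,y,\xi)$ and inserting into $K_a$, each polynomial term $\frac{(y-x)^\mu}{\mu!} \pdd y \mu a(x,x,\xi)$ contributes, after the identity $(y-x)^\mu e^{2 \pi i \xi \cdot (x-y)} = (2 \pi i)^{-|\mu|} \pdd \xi \mu \big( e^{2 \pi i \xi \cdot (x-y)} \big)$ followed by integration by parts in $\xi$, exactly the symbol $a_j$ with $j=|\mu|$. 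Thus $K_a - \sum_{j<N} (\text{kernel of } a_j(x,D))$ equals the kernel of the remainder, and combining with the equivalence $b \sim \sum_{j \geq 0} a_j$ (Definition \ref{equivalence1}) reduces everything to estimating the Taylor remainder term and the tail $\sum_{j \geq N} a_j$.

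First I would set up the oscillatory-integral machinery: for the kernel of the remainder $r_N$, use the integration-by-parts identities \eqref{psdopartint1}-\eqref{psdopartint3} in both $\xi$ (to gain decay $\eabs \xi^{-2N'}$) and in $y$ (producing the factor $(x-y)^{-2M}$ needed for the $(x-y)^\gamma$ prefactor; note $|\gamma| \leq d+1$ is exactly what one can absorb against $\eabs{x-y}^{-2M}$ with $2M>d+1+d$). Then differentiate under the integral sign in $x$ and $y$, distributing the derivatives by Leibniz' rule over the remainder symbol $r_N(x,y,\xi)$; crucially, the derivatives $\pdd x \alpha$ and $\pdd y \beta$ acting on $r_N$ satisfy, by the integral form of the Taylor remainder $r_N(x,y,\xi)=N\sum_{|\mu|=N}\frac{(y-x)^\mu}{\mu!}\int_0^1 (1-t)^{N-1}\pdd y \mu a(x,x+t(y-x),\xi)\,dt$, estimates inherited from \eqref{symbclass3a}-\eqref{symbclass3ap} with an extra $(N!)^{s(\rho-\delta)}$ and $C^N$ and improved $\xi$-decay $\eabs \xi^{-(\rho-\delta)N}$ (here $\delta=0$ is not assumed yet but the bookkeeping is cleanest then; in general one carries the $\delta(|\alpha|+|\beta|)$ exponents). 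One must choose $N$ optimally as a function of $\xi$: take $N \approx B^{-1/s}\eabs \xi^{1/s}$, which by \cite[Lemma 3.2.4]{Rodino1} (the infimum estimate \eqref{infimumestimate1}) converts the gain $C^N (N!)^{s(\rho-\delta)} \eabs \xi^{-(\rho-\delta)N}$ into exponential decay $\exp(-3\ep|\xi|^{1/s})$. The remaining polynomial-in-$\xi$ factors coming from the symbol order $m$, from the $\delta$-terms, and from the $\beta,\mu$-derivatives are absorbed into $\exp(\theta|\xi|^{1/s})$ via \eqref{factorialestimate1}-\eqref{factorialestimate1b}, leaving a net $\exp(-\ep|\xi|^{1/s})$ which is integrable in $\xi$ and produces the claimed bound $C_\gamma C^{1+|\alpha|+|\beta|}(\alpha!\beta!)^s$ after summing the finitely many Leibniz terms (using $(\alpha-\sigma)!(\beta+\sigma)! \leq 2^{|\alpha|+|\beta|}\alpha!\beta!$ type inequalities and $\sum_{\sigma\leq\alpha}\binom\alpha\sigma=2^{|\alpha|}$).

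The main obstacle — and the point where this differs from the local theory of \cite{Rodino1,Hashimoto1,Liess1,Zanghirati1} — is keeping every constant independent of $x$. In the local calculus one works on a compact set $K\ni x$ and constants $C_K$ are allowed to blow up; here one must verify that the symbol bounds \eqref{symbclass3a}-\eqref{symbclass3ap}, which are already uniform in $x,y\in\rr d$ by hypothesis, propagate through (i) the Taylor remainder representation (uniform since the integrand $\pdd y \mu a(x,x+t(y-x),\xi)$ is evaluated at points that still range over all of $\rr d$), (ii) the choice $N\sim\eabs\xi^{1/s}$ (which depends only on $\xi$), and (iii) the integration by parts producing $\eabs{x-y}^{-2M}$ — the delicate point being that after substituting $y\mapsto x-y$ as in \eqref{psdopartint3} the amplitude becomes $a(x,x+y,\xi)$, still uniformly controlled. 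One also needs that $K=K_a-K_b$ genuinely decomposes this way, i.e. that the formal-sum tail $\sum_{j\geq N}a_j$, cut off to $\eabs\xi\geq B(j+|\beta|)^s$ via the construction in Lemma \ref{formalsumconstruction1}, contributes a kernel with the same exponential decay; this is handled by the same infimum estimate. Once the uniform bound \eqref{kernelestimate3} is in hand, the factor $\eabs{x-y}^{-d-1}$ it yields (by choosing $\gamma$ to exhaust $|\gamma|\leq d+1$) guarantees, as in the proof of Proposition \ref{threevariablesasymp1}, that the remainder operator is regularizing.
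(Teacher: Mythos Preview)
Your Taylor-expansion scheme is essentially the near-diagonal half of the paper's argument: for $|x-y|<\ep$ the paper simply invokes \cite[Theorem~3.2.24]{Rodino1}, whose proof is the expansion you describe.  For $|x-y|\geq\ep$ the paper does \emph{not} use the Taylor remainder at all; it introduces a dyadic partition of unity $\{\psi_N\}$ in $\xi$ supported where $|\xi|\sim R N^s$, writes $K=\sum_N K_N$, and on the $N$-th piece integrates by parts $N$ further times in $\xi_j$ (for some $j$ with $|x_j-y_j|\geq C_\ep$) to gain the factor $(x_j-y_j)^{-N}$.  Combined with the support of $\psi_N$ this yields a geometrically summable bound $(CR^{-\rho})^N$ once $R$ is large.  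That estimate holds for \emph{any} amplitude in $APS_{\rho,\delta}^{m,s}(\rr{3d})$, so in the off-diagonal region one never subtracts $b$.

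Two points in your outline need correction.  First, the remark about integrating by parts ``in $y$, producing the factor $(x-y)^{-2M}$'' does not apply here: the kernel $K(x,y)=\int e^{2\pi i\xi\cdot(x-y)}[\,\cdot\,]\,d\xi$ involves no $y$-integration, and the identities \eqref{psdopartint1}--\eqref{psdopartint3} regularize the operator action $a(x,D)f$, not the kernel.  The prefactor $(x-y)^\gamma$ is dealt with by converting it to $\leq d+1$ additional $\xi$-derivatives via $(x-y)^\gamma e^{2\pi i\xi\cdot(x-y)}=(2\pi i)^{-|\gamma|}\partial_\xi^\gamma e^{2\pi i\xi\cdot(x-y)}$.

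Second, ``choose $N$ optimally as a function of $\xi$'' is not directly meaningful: different Taylor orders $N$ give \emph{different} integrands for the same kernel (related by integration by parts over the whole $\xi$-line), so one cannot take a pointwise-in-$\xi$ infimum of their bounds and then integrate.  In Proposition~\ref{zeroregularizing1} the infimum trick \eqref{infimumestimate1} works because the symbol itself satisfies the family of bounds; here it is the representation that changes with $N$.  To make your route rigorous you must localize in $\xi$ --- e.g.\ with the same dyadic $\{\psi_N\}$ --- and on the $N$-th shell use the Taylor expansion of order $N$ (valid since $\eabs\xi\geq BN^s$ there).  Done that way your argument does cover all $(x,y)$ without the near/far split, which is a mild simplification over the paper; but the partition of unity is the missing ingredient, not an optional refinement.
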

\begin{proof}
The proof is based on the ideas and techniques in the proofs of \cite[Theorems 3.2.6, 3.2.23 and 3.2.24]{Rodino1}.

Let $\ep>0$.
For $(x,y) \in \rr {2d}$ such that $|x-y| < \ep$, the proof of \cite[Theorem 3.2.24]{Rodino1} gives the estimate \eqref{kernelestimate3}.

For $(x,y) \in \rr {2d}$ such that $|x-y| \geq \ep$,
the estimate \eqref{kernelestimate3} follows using arguments as in the proof of \cite[Theorem 3.2.6]{Rodino1}, somewhat modified, as we now detail. The estimate holds for any kernel $K$ of an operator $a(x,D)$ where $a \in APS_{\rho,\delta}^{m,s}(\rr {3d})$.
Given $R>0$ we define a sequence of cut-off functions $\{ \psi_N \}_{N \geq 0} \subseteq C_c^\infty(\rr d)$ that partition unity, $\sum_{N \geq 0} \psi_N \equiv 1$, and satisfy for some $C>0$
\begin{equation}\label{partitionofunity1}
\begin{aligned}
& \mbox{(i)} \quad \supp \psi_0 \subseteq \{ \xi: |\xi| \leq 3 R \}, \\
& \mbox{(ii)} \quad \supp \psi_N \subseteq \{ 2 R N^s \leq |\xi| \leq 3 R (N+1)^s \}, \quad N>0, \\
& \mbox{(iii)} \quad |\pd \beta \psi_N(\xi)| \leq C^{1+|\beta|} (R N^{s-1})^{-|\beta|} \quad \mbox{for} \quad |\beta| \leq 2 N, \quad N \geq 0.
\end{aligned}
\end{equation}
See \cite[Lemma 3.2.7]{Rodino1}.
Then we set
\begin{equation}\nonumber
K_N(x,y) = \int_{\rr d} e^{2 \pi i \xi \cdot (x-y)} \psi_N(\xi) a(x,y,\xi) {\,} d\xi
\end{equation}
which implies that $K = \sum_{N \geq 0} K_N$. We have
\begin{equation}\nonumber
\begin{aligned}
& \pdd x \alpha \pdd y \beta K_N(x,y) \\
& = \sum_{\la \leq \beta, \ \mu \leq \alpha}
\binom{\alpha}{\mu} \binom{\beta}{\la}
 \int_{\rr d} e^{2 \pi i \xi \cdot (x-y)}
(-1)^{|\beta-\la|}(2 \pi i \xi)^{\alpha-\mu+\beta-\la}
\psi_N(\xi) \pdd x \mu \pdd y \la a(x,y,\xi) {\,} d\xi.
\end{aligned}
\end{equation}
Since $|x-y|\geq\ep>0$ there exists $j$, $1 \leq j \leq d$ such that $|x_j-y_j| \geq C_\ep > 0$.
Let $e_j \in \nn d$ denote the $j$th basis vector in $\rr d$ and set $\gamma_N=\gamma+N e_j$.
Integration by parts gives
\begin{equation}\nonumber
\begin{aligned}
& \left| (x-y)^\gamma \pdd x \alpha \pdd y \beta K_N(x,y) \right|
= \left| (x-y)^{- N e_j + \gamma_N} \pdd x \alpha \pdd y \beta K_N(x,y) \right| \\
= & \ \Big| (x_j-y_j)^{-N} \sum_{\la \leq \beta, \ \mu \leq \alpha}
\binom{\alpha}{\mu} \binom{\beta}{\la}
 \int_{\rr d} e^{2 \pi i \xi \cdot (x-y)}
(-1)^{|\beta-\la|} (-2 \pi i)^{-|\gamma|-N}  \\
& \times \partial_\xi^{\gamma_N} \left[ (2 \pi i \xi)^{\alpha-\mu+\beta-\la}
\psi_N(\xi) \pdd x \mu \pdd y \la a(x,y,\xi) \right] {\,} d\xi \Big| \\
\leq & \ C_\gamma C^{1+|\alpha|+|\beta|+N}
\sum_{\stackrel{\la \leq \beta, \ \mu \leq \alpha}{\kappa \leq \gamma_N, \ \nu \leq \kappa}}
\binom{\alpha}{\mu} \binom{\beta}{\la} \binom{\gamma_N}{\kappa} \binom{\kappa}{\nu} \\
& \times \int_{\rr d} \left|
\pd \nu \xi^{\alpha-\mu+\beta-\la} \
\partial^{\kappa-\nu} \psi_N(\xi) \
\pdd x \mu \pdd y \la \partial_\xi^{\gamma_N-\kappa} a(x,y,\xi) \right| {\,} d\xi,
\end{aligned}
\end{equation}
for some $C,C_\gamma>0$.

First we assume $N>0$ and $N \geq |\gamma|$.

Then $|\kappa-\nu| \leq |\gamma_N| \leq 2N$ and we may use \eqref{partitionofunity1} (iii) to estimate $\partial^{\kappa-\nu} \psi_N(\xi)$. Moreover, for $\xi \in \supp \psi_N$ we have $\eabs{\xi} \geq 2 R N^s = R 2^{1-s}(2N)^s \geq R 2^{1-s} |\gamma_N-\kappa|^s \geq B |\gamma_N-\kappa|^s$ provided $R \geq B 2^{s-1}$, so we may use \eqref{symbclass3a}. We may assume $\nu \leq \alpha-\mu+\beta-\la$. Thus we obtain
\begin{equation}\label{integralestimate1}
\begin{aligned}
& \int_{\rr d} \left| \pd \nu \xi^{\alpha-\mu+\beta-\la} \
\partial^{\kappa-\nu} \psi_N(\xi) \ \pdd x \mu \pdd y \la \partial_\xi^{\gamma_N-\kappa} a(x,y,\xi) \right| {\,} d\xi \\
\leq & \ \frac{(\alpha-\mu+\beta-\la)!}{(\alpha-\mu+\beta-\la- \nu)!}
(3 R (N+1)^s)^{|\alpha-\mu|+|\beta-\la|-|\nu|} \\
& \times C^{1+|\kappa-\nu|+|\mu|+|\la|+|\gamma_N-\kappa|} (R N^{s-1})^{-|\kappa-\nu|} \\
& \times
(\mu! \la!)^{s(\rho-\delta)} (\gamma_N-\kappa)! (4 R (N+1)^s)^{m-\rho |\gamma_N-\kappa|+\delta(|\mu|+|\la|)} \int_{\supp \psi_N} d \xi,
\end{aligned}
\end{equation}
assuming $m - \rho |\gamma_N-\kappa|+\delta(|\mu|+|\la|) \geq 0$ and $R \geq 1$.
(If instead $m-\rho |\gamma_N-\kappa|+\delta(|\mu|+|\la|) < 0$ then $4 R (N+1)^s$ above is replaced by $2 R N^s$, and the following argument works also in this case.)
Using
\begin{equation}\nonumber
\int_{\supp \psi_N} d \xi \leq C (3 R (N+1)^s)^d,
\end{equation}
we may bound the factor of \eqref{integralestimate1} that depends on $R$ as
\begin{equation}\nonumber
\begin{aligned}
R^{|\alpha|+|\beta|-|\nu|-|\kappa-\nu|+m-\rho |\gamma_N-\kappa|+\delta (|\mu|+|\la|)+d}
& \leq R^{d+m+(1+\delta)(|\alpha|+|\beta|)-\rho |\gamma_N|+|\kappa|(\rho-1)} \\
& \leq R^{d+m+(1+\delta)(|\alpha|+|\beta|)-\rho |\gamma|} R^{-\rho N}.
\end{aligned}
\end{equation}
Using
\begin{equation}\nonumber
(\alpha!)^{-1} (N+1)^{|\alpha|} \leq e^{d(N+1)}, \quad (\alpha + \beta)! \leq  2^{|\alpha|+|\beta|} \alpha! \beta!,
\end{equation}
and $s \rho \geq s(\rho-\delta) \geq 1$,
we hence obtain for some $C,C_R,C_\gamma>0$ (varying over the inequalities), where only $C_R$ depend on $R$,
\begin{equation}\label{estimate1}
\begin{aligned}
& \left| (x-y)^\gamma \pdd x \alpha \pdd y \beta K_N(x,y) \right| \\
\leq & \ C_\gamma C_R^{1+|\alpha|+|\beta|} (\alpha! \beta!)^s (C R^{- \rho})^N
\sum_{\stackrel{\la \leq \beta, \ \mu \leq \alpha}{\kappa \leq \gamma_N, \ \nu \leq \kappa}}
((\alpha-\mu)! (\beta-\la)!)^{-s}
\frac{\gamma_N!}{(\kappa-\nu)!} \\
& \times
(\mu! \la!)^{-s \delta}
((N+1)^s)^{|\alpha-\mu|+|\beta-\la|-|\nu|+m-\rho |\gamma_N-\kappa|+\delta(|\mu|+|\la|)}
N^{(1-s)|\kappa-\nu|} \\
\leq & \ C_\gamma C_R^{1+|\alpha|+|\beta|} (\alpha! \beta!)^s (C R^{- \rho})^N
\sum_{\stackrel{\la \leq \beta, \ \mu \leq \alpha}{\kappa \leq \gamma_N, \ \nu \leq \kappa}}
\gamma_N! ((N+1)^s)^{-|\nu|-\rho |\gamma_N-\kappa|} N^{-s|\kappa-\nu|} \\
\leq & \ C_\gamma C_R^{1+|\alpha|+|\beta|} (\alpha! \beta!)^s (C R^{- \rho})^N
\sum_{\stackrel{\la \leq \beta, \ \mu \leq \alpha}{\kappa \leq \gamma_N, \ \nu \leq \kappa}}
\gamma! N! N^{-s (\rho |\gamma|+\rho N +|\kappa|(1-\rho))} \\
\leq & \ C_\gamma C_R^{1+|\alpha|+|\beta|} (\alpha! \beta!)^s (C R^{- \rho})^N
\sum_{\stackrel{\la \leq \beta, \ \mu \leq \alpha}{\kappa \leq \gamma_N, \ \nu \leq \kappa}}
N^{N (1-\rho s) - s \rho |\gamma|} \\
\leq & \ C_\gamma  C_R^{1+|\alpha|+|\beta|} (\alpha! \beta!)^s (C R^{- \rho})^N
\sum_{\stackrel{\la \leq \beta, \ \mu \leq \alpha}{\kappa \leq \gamma_N, \ \nu \leq \kappa}}
\binom{\beta}{\la} \binom{\alpha}{\mu} \binom{\gamma_N}{\kappa} \binom{\kappa}{\nu} \\
\leq & \ C_\gamma  C_R^{1+|\alpha|+|\beta|} (\alpha! \beta!)^s (C R^{- \rho})^N.
\end{aligned}
\end{equation}
Since $\rho>0$ we may choose $R$ such that $C R^{- \rho} \leq 1/2$.
Thus we have a useful estimate for $N>0$ and $N \geq |\gamma|$.

If $N<|\gamma|$ we may, since $|\gamma-\kappa|\leq |\gamma| \leq d+1 \leq 2M$,
combine \eqref{symbclass3a} and \eqref{symbclass3ap} into the estimate, $\xi \in \supp \psi_N$,
\begin{equation}\nonumber
|\pdd x \mu \pdd y \la \partial_\xi^{\gamma-\kappa}   a(x,y,\xi)|
\leq C^{1+ |\mu| + |\la| + |\gamma|} (\mu! \la!)^{s(\rho-\delta)} (\gamma-\kappa)! (R(N+1)^s)^{m + \delta |\mu+\la|}.
\end{equation}
Moreover, we have $|\partial^{\kappa-\nu} \psi_N(\xi)| \leq C_\gamma$ for $\nu\leq \kappa$, $\kappa \leq \gamma$ and $N<|\gamma|$.
Arguing as above (without splitting $\gamma$ into $\gamma=\gamma_N- N e_j$) this gives
\begin{equation}\label{estimate2}
\left| (x-y)^\gamma \pdd x \alpha \pdd y \beta K_N(x,y) \right|
\leq C_\gamma  C_R^{1+|\alpha|+|\beta|} (\alpha! \beta!)^s,
\end{equation}
and a similar estimate can be shown to hold also when $N=|\gamma|=0$.
Combining \eqref{estimate1} for $N \geq |\gamma|$ and $C R^{- \rho} \leq 1/2$, and \eqref{estimate2} for $N<|\gamma|$
proves \eqref{kernelestimate3} when $|x-y|\geq\ep>0$, in view of $K = \sum_{N \geq 0} K_N$.
\end{proof}

The next topic is the calculus of composition of operators. First we need some preparations before we prove the main result Proposition \ref{compositioncalculus1}.

\begin{defn}
Let $\sum_{j \geq 0} a_j \in APS_{\rho,\delta}^{m,s}(\rr {2d})$, $\sum_{j \geq 0} b_j \in APS_{\rho,\delta}^{n,s}(\rr {2d})$ be formal sums. The \textit{symbol product} $(\sum_{j \geq 0} a_j) \circ (\sum_{j \geq 0} b_j )$ is defined as the formal sum $\sum_{j \geq 0} c_j$, where
\begin{equation}\label{symbolproduct1}
c_j(x,\xi) = \sum_{|\alpha|+k+l=j} (\alpha!)^{-1} (2 \pi i)^{-|\alpha|}  \ \pdd \xi \alpha a_k (x,\xi) \ \pdd x \alpha b_l (x,\xi).
\end{equation}
\end{defn}

It can be verified that $(c_j) \subseteq APS_{\rho,\delta}^{m+n,s}(\rr {2d})$ and the formal sum $\sum_{j \geq 0} c_j$, where $c_j$ is defined by \eqref{symbolproduct1}, is well defined according to Definition \ref{asymptoticexpansion1}. By Lemma \ref{formalsumconstruction1} there exists $c \sim a \circ b \in APS_{\rho,\delta}^{m+n,s}(\rr {2d})$.

\begin{lem}\label{compositionlemma1}
Let $\chi \in C_c^\infty(\rr d)$ equal one in a neighborhood of the origin and $\chi_\delta(x)=\chi(\delta x)$,
and let $f \in C(\rr d)$ satisfy the bound $|f(\xi)| \leq C \eabs{\xi}^{N}$, $\xi \in \rr d$, for some $C,N>0$.
Then
\begin{equation}\nonumber
\lim_{\delta \longrightarrow +0}
\int_{\rr d} \wh{\chi_\delta} (\xi-\eta) \chi_\delta (\eta) f(\eta) d\eta = f(\xi), \quad \xi \in \rr d.
\end{equation}
\end{lem}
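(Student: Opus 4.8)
The plan is to recognize the integral as a smoothed version of $\chi_\delta f$ whose convolution kernel $\wh{\chi_\delta}$ is an approximate identity, and to exploit the explicit scaling of both $\chi_\delta$ and its Fourier transform. First I would write $\wh{\chi_\delta}(\zeta) = \delta^{-d} \wh{\chi}(\zeta/\delta)$, so that, after the substitution $\eta = \xi - \delta \theta$, the integral becomes
\begin{equation}\nonumber
\int_{\rr d} \wh{\chi}(\theta) \, \chi_\delta(\xi - \delta \theta) \, f(\xi - \delta \theta) \, d\theta
= \int_{\rr d} \wh{\chi}(\theta) \, \chi(\delta \xi - \delta^2 \theta) \, f(\xi - \delta \theta) \, d\theta.
\end{equation}
As $\delta \to +0$, for each fixed $\theta$ the integrand converges pointwise to $\wh{\chi}(\theta) \chi(0) f(\xi) = \wh{\chi}(\theta) f(\xi)$, using $\chi(0)=1$ and continuity of $f$. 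Since $\int_{\rr d} \wh{\chi}(\theta) \, d\theta = \chi(0) = 1$ (inversion formula, valid as $\chi \in C_c^\infty \subseteq \mathscr S$), this would give the claimed limit $f(\xi)$ provided we can pass the limit inside the integral.

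The key step is therefore a dominated convergence argument, and this is where the polynomial growth bound on $f$ enters. For $\delta \leq 1$ we have $|f(\xi - \delta\theta)| \leq C \eabs{\xi - \delta\theta}^N \leq C' \eabs{\xi}^N \eabs{\theta}^N$ using Peetre's inequality, and $|\chi(\delta\xi - \delta^2\theta)| \leq \| \chi \|_{L^\infty}$, so the integrand is dominated by $C'' \eabs{\xi}^N |\wh{\chi}(\theta)| \eabs{\theta}^N$, which is integrable in $\theta$ because $\wh{\chi} \in \mathscr S(\rr d)$ decays faster than any polynomial. Dominated convergence then applies with $\xi$ fixed, and the result follows.

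I do not expect a serious obstacle here; the only point requiring a little care is the justification that the original integral is absolutely convergent so that the change of variables is legitimate, which again follows from $\wh{\chi_\delta} \in \mathscr S$, the boundedness of $\chi_\delta$, and the polynomial bound on $f$. One should also note that the convergence is pointwise in $\xi$ and need not be uniform, but that is all the statement asserts. Thus the proof is essentially: rescale, identify the approximate identity, dominate by an integrable $\theta$-profile via Peetre's inequality and the Schwartz decay of $\wh\chi$, and invoke dominated convergence together with $\int \wh\chi = 1$.
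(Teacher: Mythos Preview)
Your argument is correct and in fact cleaner than the paper's. The paper does not rescale first; instead it writes the difference with $f(\xi)$ using $\int \wh{\chi_\delta}=1$, obtaining two error terms
\[
I_1=\int |\wh{\chi_\delta}(\eta)|\,|f(\xi-\eta)-f(\xi)|\,d\eta,\qquad
I_2=\int |\wh{\chi_\delta}(\eta)|\,|f(\xi-\eta)|\,|\chi_\delta(\xi-\eta)-1|\,d\eta,
\]
and treats each by a near/far splitting: $I_1$ is handled by continuity of $f$ on $|\eta|\leq\theta$ and by the Schwartz decay of $\wh\chi$ (after the change of variables $\eta\mapsto\delta\eta$) on $|\eta|>\theta$; $I_2$ is handled by noting that $\chi_\delta(\xi-\eta)=1$ unless $|\xi-\eta|$ is large, forcing $|\eta|$ large for small $\delta$, and again using Schwartz decay. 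Your route---substitute $\eta=\xi-\delta\theta$ at the outset, then apply dominated convergence with the single majorant $C\eabs{\xi}^N\eabs{\theta}^N|\wh\chi(\theta)|$ coming from Peetre's inequality---collapses both of the paper's splittings into one step, at the (negligible) cost of invoking Peetre and the boundedness of $\chi$. The paper's decomposition makes the two separate mechanisms (approximate identity acting on $f$, and $\chi_\delta\to 1$) explicit, which could be useful if one wanted quantitative or uniform-in-$\xi$ estimates, but for the pointwise statement as written your approach is shorter and entirely adequate.
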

\begin{proof}
Since $\int_{\rr d} \wh{\chi_\de}(\xi) d\xi=1$ we have
\begin{equation}\nonumber
\begin{aligned}
& \left| \int_{\rr d} \wh{\chi_\delta} (\xi-\eta) \chi_\delta (\eta) f(\eta) d\eta - f(\xi) \right| \\
& = \left| \int_{\rr d} \wh{\chi_\delta} (\eta) \Big( f(\xi-\eta) - f(\xi) + f(\xi-\eta) (\chi_\delta (\xi-\eta) - 1) \Big) d\eta \right| \\
& \leq \int_{\rr d} | \wh{\chi_\delta} (\eta)| \ |f(\xi-\eta) - f(\xi)| d\eta
+ \int_{\rr d} | \wh{\chi_\delta} (\eta)| \ |f(\xi-\eta)| \ |(\chi_\delta (\xi-\eta) - 1)| d\eta \\
& := I_1 + I_2.
\end{aligned}
\end{equation}
For $\theta>0$ we split $I_1$ as
\begin{equation}\nonumber
I_1 =
\int_{|\eta| \leq \theta} | \wh{\chi_\delta} (\eta)| \ |f(\xi-\eta) - f(\xi)| d\eta
+ \int_{|\eta| > \theta} | \wh{\chi_\delta} (\eta)| \ |f(\xi-\eta) - f(\xi)| d\eta.
\end{equation}
Let $\ep>0$. The first integral is not larger than $\ep$, for $\theta$ sufficiently small, by the assumed continuity of $f$ and $\| \wh{\chi_\delta} \|_{L^1} = \| \wh{\chi} \|_{L^1}$.
Using $\widehat{\chi_\delta}(\eta) = \delta^{-d} \widehat{\chi}(\eta/\delta)$ and assuming $\delta \leq 1$, the second integral can be written
\begin{equation}\nonumber
\begin{aligned}
\int_{|\eta| > \theta/\delta} | \wh{\chi} (\eta)| \ |f(\xi-\delta \eta) - f(\xi)| d\eta
& \leq C \int_{|\eta| > \theta/\delta} | \wh{\chi} (\eta)| \eabs{\xi}^N (\eabs{\delta \eta}^N + 1) d\eta \\
& \leq 2 \ C \eabs{\xi}^N \int_{|\eta| > \theta/\delta} \eabs{\eta}^N | \wh{\chi} (\eta)| d\eta
\end{aligned}
\end{equation}
which approaches zero as $\delta \rightarrow +0$, because $\widehat \chi \in \mathscr S$.
This shows that $I_1$ approaches zero as $\delta \rightarrow +0$.

It remains to estimate $I_2$.
We have $\chi_\delta(\xi)=1$ for $|\xi| < \theta/\delta$, for some $\theta>0$.
Let $B>0$ be arbitrary.
Then $\{ \eta: \  |\xi-\eta| \geq \theta/\delta \} \subseteq \{ \eta: \ |\eta| \geq B \}$ if $\delta>0$ is sufficiently small.
Since $\wh \chi \in \mathscr S$ we have
$|\widehat{\chi_\delta}(\eta)| = |\delta^{-d} \widehat{\chi}(\eta/\delta)| \leq C_n \delta^{n-d} |\eta|^{-n}$
for any $n>N+d$.
This gives
\begin{equation}\nonumber
\begin{aligned}
I_2 & = \int_{|\xi-\eta| \geq \theta/\delta} | \wh{\chi_\delta} (\eta)| \ |f(\xi-\eta)| \ |(\chi_\delta (\xi-\eta) - 1)| d\eta \\
& \leq C \int_{|\eta| \geq B} \delta^{n-d} |\eta|^{-n}  \eabs{\xi}^N \eabs{\eta}^N  d\eta
\leq C \eabs{\xi}^N \delta^{n-d} \int_{\rr d} \eabs{\eta}^{N-n}  d\eta.
\end{aligned}
\end{equation}
Hence $I_2$ approaches zero as $\de \rightarrow +0$.
\end{proof}

At this point we are in a position to state and prove the composition theorem. The corresponding result for operators acting on the space of compactly supported Gevrey functions $G_c^s(\rr d)$ ($s>1$) is \cite[Theorem 3.2.20]{Rodino1}.
Here we assume $\delta=0$.

\begin{prop}\label{compositioncalculus1}
Let $a \in APS_{\rho,0}^{m,s}(\rr {2d})$ and $b \in APS_{\rho,0}^{n,s}(\rr {2d})$. Then $a(x,D) b(x,D) = c(x,D) + R$ where $R$ is regularizing and $c \sim a \circ b$.
\end{prop}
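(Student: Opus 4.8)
The plan is to reduce the identity to its action on trigonometric polynomials, where the composite operator is represented by a single concrete symbol $c$, then to show that $c\in APS_{\rho,0}^{m+n,s}(\rr{2d})$ with $c\sim a\circ b$, and finally to trade $c$ for a representative of the formal sum $a\circ b$ modulo a regularizing operator. First I would establish the base identity $a(x,D)e_\xi=a(x,\xi)e_\xi$ for $\xi\in\rr d$ — carrying out the $y$-integration in \eqref{extension2} turns the limit into a mollification in the frequency variable, identified by Lemma~\ref{compositionlemma1} — and, more generally,
\[
a(x,D)(v\, e_\xi)(x) = e_\xi(x)\, \bigl( a^{(\xi)}(x,D) v \bigr)(x), \qquad a^{(\xi)}(x,\zeta) := a(x,\zeta + \xi),
\]
for all $v\in C_{\rm ap}^\infty(\rr d)$: this holds for $v\in TP(\rr d)$ by linearity and the base identity, and extends to $C_{\rm ap}^\infty(\rr d)$ by density of $TP(\rr d)$ and continuity of $a(x,D)$ and $a^{(\xi)}(x,D)$ on $C_{\rm ap}^\infty(\rr d)$. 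Since $b(x,D)e_\xi=b(\cdot,\xi)\,e_\xi$ (see \cite[Lemma 4]{Wahlberg1}), linearity then gives, for $f\in TP(\rr d)$, the finite sum
\[
a(x,D) b(x,D) f(x) = \sum_{\xi} \wh f_\xi \, e_\xi(x)\, c(x,\xi), \qquad c(x,\xi) := \bigl( a^{(\xi)}(x,D)\, b(\cdot,\xi) \bigr)(x),
\]
and writing out $a^{(\xi)}(x,D)$ via \eqref{psdopartint1} exhibits $c(x,\xi)$ as the absolutely convergent regularized oscillatory integral $c(x,\xi)=\iint_{\rr{2d}} e^{2\pi i\theta\cdot(x-y)}\,a(x,\xi+\theta)\,b(y,\xi)\,dy\,d\theta$.

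Next I would analyse $c$ by inserting a partition of unity in $\theta$ splitting the integral into the three regions $|\theta|\le\eabs\xi/C_0$, $\{|\theta|>\eabs\xi/C_0\}\cap\{\eabs{\xi+\theta}>\eabs\xi/C_0\}$ and $\eabs{\xi+\theta}\le\eabs\xi/C_0$, for a suitably large $C_0$. In the first region one may Taylor expand $a(x,\xi+\theta)$ in $\theta$ at the origin to order $N$ (legitimate since there $\eabs{\xi+t\theta}$ is comparable to $\eabs\xi$, so \eqref{symbclass2} controls all derivatives of $a$ that occur); the polynomial part, combined with the identity $\iint e^{2\pi i\theta\cdot(x-y)}\theta^\alpha b(y,\xi)\,dy\,d\theta=(2\pi i)^{-|\alpha|}\pdd x\alpha b(x,\xi)$ — obtained by integrating by parts in $y$ and using that the operator with symbol $1$ is the identity on $C_{\rm ap}^\infty(\rr d)$ — reproduces exactly $\sum_{j<N}c_j(x,\xi)$, with $c_j=(2\pi i)^{-j}\sum_{|\alpha|=j}(\alpha!)^{-1}\pdd\xi\alpha a\,\pdd x\alpha b$ as in \eqref{symbolproduct1}. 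The Taylor remainder from the first region is estimated by differentiating under the integral, integrating by parts in $y$ to move the powers of $\theta$ onto $b$, and using \eqref{symbclass2} for the derivatives of $a$ and of $b$ together with the multi-index bookkeeping of \cite[Theorem 3.2.20]{Rodino1}; the key points are that the $1/\alpha!$ in the Taylor formula cancels the factorial produced by \eqref{symbclass2} for $\pdd\xi\alpha a$, while the factorial produced by \eqref{symbclass2} for the $x$-derivatives of $b$ of order $\ge N$ supplies the factor $(N!)^{s\rho}$, so that the outcome has precisely the shape required by Definition~\ref{equivalence1}:
\[
\Bigl| \pdd x\alpha \pdd\xi\beta \Bigl( c(x,\xi) - \sum_{j<N} c_j(x,\xi) \Bigr) \Bigr| \leq C^{1+|\alpha|+|\beta|+N} (N!\, \alpha!)^{s\rho}\, \beta!\, \eabs\xi^{m+n-\rho|\beta|-\rho N}, \quad \eabs\xi \geq B(N+|\beta|)^s,
\]
\emph{uniformly in} $x\in\rr d$. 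The second region is handled by repeated integration by parts in $\theta$, gaining a factor $\eabs{x-y}^{-K}$ (making the $y$-integral absolutely convergent) and a factor $\eabs{\xi+\theta}^{m-\rho K}$, so that $\int_{\eabs{\xi+\theta}>\eabs\xi/C_0}\eabs{\xi+\theta}^{m-\rho K}\,d\theta\lesssim\eabs\xi^{\,m-\rho K+d}$ is negligible for $K$ large; the third region, where $\eabs{\xi+\theta}$ may be small but $|\theta|\approx\eabs\xi$ is large, is controlled by the factor $\eabs\theta^{-2L}$ (with $L$ as large as we wish) furnished by \eqref{psdopartint1}, combined with \eqref{symbclass2} and \eqref{symbclass2p} for derivatives of $a$ of order at most $2M$. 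Together with a routine estimate of $c$ for bounded $\xi$ (yielding \eqref{symbclass2p}), this gives $c\in APS_{\rho,0}^{m+n,s}(\rr{2d})$ and $c\sim a\circ b$. This remainder estimate is the step I expect to be the main obstacle; the uniformity in $x$ is where the argument improves on the local statement \cite[Theorem 3.2.20]{Rodino1}, and it is available essentially for free because $\delta=0$ and the symbol bounds for $b$ are uniform in $x$, exactly as in Proposition~\ref{threevariablesasymp1} and Lemma~\ref{kernelgevreyestimate1}.

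Finally I would conclude as follows. By Lemma~\ref{formalsumconstruction1} choose $c'\in APS_{\rho,0}^{m+n,s}(\rr{2d})$ with $c'\sim\sum_{j\geq 0}c_j=a\circ b$; transitivity of $\sim$ gives $c-c'\sim 0$, so by Proposition~\ref{zeroregularizing1} the operator $R:=(c-c')(x,D)=a(x,D)b(x,D)-c'(x,D)$ is regularizing. The three operators $a(x,D)b(x,D)$, $c'(x,D)$ and $R$ are continuous as maps $W_{s,\ep}^1(\rr d)\to(G_{\rm ap}^s)'(\rr d)$ for every $\ep>0$ (by Corollary~\ref{gevreycont1} and the duality extension \eqref{duality1}--\eqref{apriori1}, with $R$ in fact mapping into $G_{\rm ap}^s(\rr d)$), and by the first step they coincide on $TP(\rr d)$, which is dense in each $W_{s,\ep}^1(\rr d)$; hence $a(x,D)b(x,D)=c(x,D)+R$ with $c:=c'\sim a\circ b$, which is the assertion.
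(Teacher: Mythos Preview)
Your approach is correct in outline and genuinely different from the paper's. The paper does \emph{not} compute the composite symbol directly; instead it writes $b(x,D)={}^t({}^tb(x,D))$, reduces ${}^tb(x,D)$ (an amplitude operator with amplitude $b(y,-\xi)$) to a two-variable symbol $b'$ via Proposition~\ref{threevariablesasymp1}, shows by a dominated-convergence argument together with Lemma~\ref{compositionlemma1} that $a(x,D)\,{}^tb'(x,D)$ is the amplitude operator with amplitude $a(x,\xi)\,b'(y,-\xi)\in APS_{\rho,0}^{m+n,s}(\rr{3d})$, and then applies Proposition~\ref{threevariablesasymp1} a second time; the remaining work is pure algebra on formal sums to simplify the resulting expansion to $a\circ b$. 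Thus the paper trades your direct remainder estimate for two invocations of the amplitude-to-symbol reduction, whose technical core is already packaged in Lemma~\ref{kernelgevreyestimate1}. Your route is conceptually more direct---the identity $c(x,\xi)=a^{(\xi)}(x,D)\,b(\cdot,\xi)(x)$ is pleasingly concrete, and the density argument you give at the end is cleaner than what the paper writes---but it obliges you to redo a Gevrey remainder estimate parallel to \cite[Theorem~3.2.20]{Rodino1} with the global uniformity in $x$; the paper's route reuses machinery already in place. One small imprecision in your sketch: the polynomial part of the Taylor expansion reproduces $\sum_{j<N}c_j$ only when integrated over \emph{all} $\theta\in\rr d$, not just over your first region, so the contribution of the Taylor polynomial over regions~2 and~3 must also be shown to be of remainder type; this is routine (the same integration-by-parts mechanism you invoke for the full integrals over those regions handles it), but it should be said.
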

\begin{proof}
For $b \in APS_{\rho,0}^{n,s}(\rr {2d})$ we have $^t b(x,D) = b_1(x,D)$ where
$$
b_1(x,y,\xi)=b(y,-\xi) \in APS_{\rho,0}^{n,s}(\rr {3d}).
$$
By Proposition \ref{threevariablesasymp1} we have $^t b(x,D) = b'(x,D) + R_1$ where $R_1, {^t} R_1$ are regularizing, and $b' \in APS_{\rho,0}^{n,s}(\rr {2d})$, $b' \sim \sum_{j \geq 0} b_j$,
\begin{equation}\label{bprimeasymp1}
b_j(x,\xi) = (-2 \pi i)^{-j} \sum_{|\alpha|=j} (\alpha!)^{-1} \pdd \xi \alpha \pdd x \alpha b(x,-\xi), \quad j =0,1 \dots.
\end{equation}
Now we have
\begin{equation}\label{operatorcomposition1}
\begin{aligned}
a(x,D) b(x,D) & = a(x,D) \ {^t} ({^t} b(x,D)) = a(x,D) \ {^t} b'(x,D) +  a(x,D) \ {^t} R_1 \\
& = a(x,D) \ {^t} b'(x,D) + R_2
\end{aligned}
\end{equation}
where $R_2$ is regularizing, since $a(x,D)$ is continuous on $G_{\rm ap}^s(\rr d)$ according to Corollary \ref{gevreycont1}.
The operator ${^t} b'(x,D)$ has amplitude $b'(y,-\xi) \in APS_{\rho,0}^{n,s}(\rr {3d})$.
A combination of \eqref{psdopartint1} and \eqref{psdopartint3} thus gives for $f \in G_{\rm ap}^s(\rr d)$,
$2M>d$ and $2N>d+m$,
\begin{equation}\label{compositionargument1}
\begin{aligned}
& a(x,D) \ {^t} b'(x,D) f(x) = \iint_{\rr {2d}} e^{2 \pi i \xi \cdot (x-y)}
\eabs{\xi}^{-2N} (1-\Delta_\xi)^M a(x,\xi) \\
& \times (1-\Delta_y)^N( \eabs{x-y}^{-2M} \ {^t} b'(y,D) f(y) ) {\,} dy {\,} d\xi \\
= & \iiiint_{\rr {4d}} e^{2 \pi i \xi \cdot (x-y)}
\eabs{\xi}^{-2N} (1-\Delta_\xi)^M a(x,\xi) \\
& \times (1-\Delta_y)^N
\Big(
\eabs{x-y}^{-2M} e^{2 \pi i \eta \cdot (y-z)} \eabs{\eta}^{-2N} \\
& \times (1-\Delta_z)^{N} \left[ \eabs{y-z}^{-2M} f(z) (1-\Delta_\eta)^{M} b'(z,-\eta) \right] \Big) {\,} dz {\,} d\eta {\,} dy {\,} d\xi \\
= & \lim_{\ep \rightarrow +0} \lim_{\delta \rightarrow +0} \iiiint_{\rr {4d}}
\chi_\delta (y) \chi_\delta (\eta)  \chi_\ep (z) \chi_\ep (\xi)
\ e^{2 \pi i \left(\xi \cdot (x-y) + \eta \cdot (y-z)\right)} \\
& \times a(x,\xi) \ b'(z,-\eta) f(z) {\,} dy {\,} d\eta {\,} dz {\,} d\xi \\
= & \lim_{\ep \rightarrow +0} \lim_{\delta \rightarrow +0} \iint_{\rr {2d}} \left( \int_{\rr {d}}
 \widehat{\chi_\delta} (\xi-\eta) \chi_\delta (\eta) e^{-2 \pi i \eta \cdot z} \ b'(z,-\eta) {\,} d\eta \right) \\
 & \times \chi_\ep (z) \chi_\ep (\xi)
e^{2 \pi i \xi \cdot x} a(x,\xi) f(z) {\,} dz {\,} d\xi,
\end{aligned}
\end{equation}
where $\chi \in C_c^\infty(\rr d)$ equals one in a neighborhood of the origin and $\chi_\ep(x)=\chi(\ep x)$.
First we study the inner integral for $\ep>0$ fixed.
The inner integral can be split and estimated as
\begin{equation}\label{innerintegral1}
\begin{aligned}
& \left| \int_{\rr {d}} \widehat{\chi_\delta} (\xi-\eta) \chi_\delta (\eta) e^{-2 \pi i \eta \cdot z} b'(z,-\eta) {\,} d\eta \right| \\
& \leq C \int_{|\xi-\eta| \leq 1} |\widehat{\chi_\delta}(\xi-\eta)| \ |b'(z,-\eta)| d\eta
+ C \int_{|\xi-\eta| \geq 1} |\widehat{\chi_\delta}(\xi-\eta)| \ |b'(z,-\eta)| d\eta.
\end{aligned}
\end{equation}
Since $\xi$ in the outer integral of \eqref{compositionargument1} belongs to a compact set, we have $|\eta| \leq C$ for some $C>0$ in the first integral of \eqref{innerintegral1}.
Using $\widehat{\chi_\delta}(\xi) = \delta^{-d} \widehat{\chi}(\xi/\delta)$, the first integral can hence be estimated by a constant times $\| \widehat \chi \|_{L^1}$.

Since $\widehat{\chi} \in \mathscr S(\rr d)$ and $b' \in APS_{\rho,0}^{n,s}(\rr {3d})$, the second integral of \eqref{innerintegral1} can be estimated for any $L>0$ as
\begin{equation}\nonumber
\begin{aligned}
& C \int_{|\xi-\eta| \geq 1} |\widehat{\chi_\delta}(\xi-\eta)| \ |b'(z,-\eta)| {\,} d\eta
\leq C \int_{|\xi-\eta| \geq 1} \delta^{L-d} |\xi-\eta|^{-L} \eabs{\eta}^n {\,} d\eta \\
& \leq C \int_{\rr d} \delta^{L-d} \eabs{\xi-\eta}^{-L} \eabs{\eta}^n {\,} d\eta
\leq C \delta^{L-d} \eabs{\xi}^L \int_{\rr d} \eabs{\eta}^{-L} \eabs{\eta}^n {\,} d\eta
\leq C \eabs{\xi}^L
\end{aligned}
\end{equation}
provided $L > n+d$ and $\delta \leq 1$.

We have now proved that the inner integral \eqref{innerintegral1} is bounded by a constant for $\delta \leq 1$, and therefore Lebesgue's dominated convergence theorem admits us to rewrite \eqref{compositionargument1} as
\begin{equation}\nonumber
\begin{aligned}
& a(x,D) \ {^t} b'(x,D) f(x)
= \lim_{\ep \rightarrow +0}  \iint_{\rr {2d}} \lim_{\delta \rightarrow +0} \left( \int_{\rr {d}}
 \widehat{\chi_\delta} (\xi-\eta) \chi_\delta (\eta) e^{-2 \pi i \eta \cdot z} b'(z,-\eta) {\,} d\eta \right) \\
 & \times \chi_\ep (z) \chi_\ep (\xi)
\ e^{2 \pi i \xi \cdot x} a(x,\xi) f(z) {\,} dz {\,} d\xi \\
= & \lim_{\ep \rightarrow +0}  \iint_{\rr {2d}}
\chi_\ep (z) \chi_\ep (\xi)
e^{2 \pi i \xi \cdot (x-z)} a(x,\xi) \ b'(z,-\xi) f(z) {\,} dz {\,} d\xi,
\end{aligned}
\end{equation}
in the second step using Lemma \ref{compositionlemma1}.
This means that $a(x,D) \ {^t} b'(x,D) = c'(x,D)$ where
$$
c'(x,y,\xi) = a(x,\xi) \  b'(y,-\xi) \in APS_{\rho,0}^{m+n,s}(\rr {3d}).
$$
Hence Proposition \ref{threevariablesasymp1} gives
\begin{equation}\label{operatorcomposition2}
a(x,D) \ {^t} b'(x,D) = c'(x,D)=c(x,D) + R_3
\end{equation}
where $R_3$, $^t R_3$ are regularizing, $c \in APS_{\rho,0}^{m+n,s}(\rr {2d})$, $c \sim \sum_{j=0}^\infty c_j$ and
\begin{equation}\label{cj1}
\begin{aligned}
c_j(x,\xi)
& = (2 \pi i)^{-j} \sum_{|\gamma|=j} (\gamma!)^{-1}  \pdd \xi \gamma \pdd y \gamma \left[ a(x,\xi) b'(y,-\xi) \right] \Big|_{y=x} \\
& = (2 \pi i)^{-j} \sum_{|\gamma|=j} (\gamma!)^{-1} \pdd \xi \gamma \left[ a(x,\xi) \pdd x \gamma  b'(x,-\xi) \right].
\end{aligned}
\end{equation}
From $b' \sim \sum_{j \geq 0} b_j$ we have
\begin{equation}\label{bprime1}
\begin{aligned}
\partial_x^{\gamma+\mu} \pdd \xi \la & \left( b'(x,-\xi) -\sum_{k<N-j} b_k (x,-\xi) \right) :=  F_{\gamma+\mu,\la,N-j}(\xi), \\
\left| F_{\gamma+\mu,\la,N-j} (\xi) \right|
\leq & \ C^{1+|\gamma|+|\mu|+|\la|+N-j} ((N-j)! (\gamma+\mu)!)^{s \rho} \la! \eabs \xi^{n-\rho |\la|-\rho (N-j)}, \\
& \eabs \xi \geq B (N-j+|\la|)^s, \quad N=j+1,j+2,\dots.
\end{aligned}
\end{equation}
Differentiation of \eqref{cj1} and insertion of \eqref{bprime1} and \eqref{bprimeasymp1} give
\begin{equation}\label{cj2}
\begin{aligned}
& \pdd x \alpha \pdd \xi \beta c_j(x,\xi)
= (2 \pi i)^{-j} \sum_{|\gamma|=j} (\gamma!)^{-1}
\pdd x \alpha \partial_\xi^{\beta+\gamma} \left[ a(x,\xi) \pdd x \gamma  b'(x,-\xi) \right] \\
= & \ (2 \pi i)^{-j} \sum_{|\gamma|=j} (\gamma!)^{-1}
\sum_{\stackrel{\mu \leq \alpha}{\la \leq \beta+\gamma}}
\binom{\alpha}{\mu} \binom{\beta+\gamma}{\la}
\partial_x^{\alpha-\mu} \partial_\xi^{\beta+\gamma-\la} a(x,\xi)
\partial_x^{\gamma+\mu} \partial_\xi^{\la} \left( b'(x,-\xi) \right) \\
= & \ (2 \pi i)^{-j} \sum_{|\gamma|=j} (\gamma!)^{-1}
\sum_{\stackrel{\mu \leq \alpha}{\la \leq \beta+\gamma}}
\binom{\alpha}{\mu} \binom{\beta+\gamma}{\la}
\partial_x^{\alpha-\mu} \partial_\xi^{\beta+\gamma-\la} a(x,\xi)
F_{\gamma+\mu,\la,N-j} (\xi) \\
& + \sum_{|\gamma|=j} \ \ \sum_{k<N-j} \ \ \sum_{|\kappa|=k} (2 \pi i)^{-(j+k)}
(\gamma! \kappa!)^{-1}
\sum_{\stackrel{\mu \leq \alpha}{\la \leq \beta+\gamma}}
\binom{\alpha}{\mu} \binom{\beta+\gamma}{\la} \\
& \times
\partial_x^{\alpha-\mu} \partial_\xi^{\beta+\gamma-\la} a(x,\xi)
\partial_x^{\gamma+\mu+\kappa} \partial_\xi^{\la+\kappa} b(x,\xi) (-1)^{|\kappa|}.
\end{aligned}
\end{equation}
On the other hand, if we define
\begin{equation}\nonumber
\begin{aligned}
d_j(x,\xi)
& = (2 \pi i)^{-j} \sum_{|\gamma|+|\kappa|=j} (\gamma! \kappa!)^{-1}
\pdd \xi \gamma \left[ a(x,\xi) \partial_x^{\gamma+\kappa} \pdd \xi \kappa b(x,\xi) \right] (-1)^{|\kappa|} \\
\end{aligned}
\end{equation}
then
\begin{equation}\label{dj1}
\begin{aligned}
& \pdd x \alpha \pdd \xi \beta d_j(x,\xi)
= (2 \pi i)^{-j} \sum_{|\gamma|+|\kappa|=j} (\gamma! \kappa!)^{-1}
\pdd x \alpha \partial_\xi^{\beta+\gamma}
\left[ a(x,\xi) \partial_x^{\gamma+\kappa} \pdd \xi \kappa b(x,\xi) \right] (-1)^{|\kappa|} \\
= & \ (2 \pi i)^{-j} \sum_{|\gamma|+|\kappa|=j} (\gamma! \kappa!)^{-1}
\sum_{\stackrel{\mu \leq \alpha}{\la \leq \beta+\gamma}}
\binom{\alpha}{\mu} \binom{\beta+\gamma}{\la} \\
& \times
\partial_x^{\alpha-\mu} \partial_\xi^{\beta+\gamma-\la} a(x,\xi)
\partial_x^{\gamma+\mu+\kappa} \partial_\xi^{\la+\kappa} b(x,\xi) (-1)^{|\kappa|}.
\end{aligned}
\end{equation}
Comparison of \eqref{dj1} and \eqref{cj2} gives
\begin{equation}\nonumber
\begin{aligned}
& \pdd x \alpha \pdd \xi \beta \sum_{j<N} \left( c_j(x,\xi)-d_j(x,\xi) \right) \\
& = \sum_{j<N} (2 \pi i)^{-j} \sum_{|\gamma|=j} (\gamma!)^{-1}
\sum_{\stackrel{\mu \leq \alpha}{\la \leq \beta+\gamma}}
\binom{\alpha}{\mu} \binom{\beta+\gamma}{\la}
\partial_x^{\alpha-\mu} \partial_\xi^{\beta+\gamma-\la} a(x,\xi)
F_{\gamma+\mu,\la,N-j} (\xi),
\end{aligned}
\end{equation}
which can be estimated using \eqref{bprime1} and \eqref{symbclass2}. This reveals that $\sum c_j \sim \sum d_j$, and hence $c \sim \sum d_j$.
Thus, using Lemma \ref{formalsumconstruction1} and the sequence $(\varphi_j) \subseteq C_c^\infty(\rr d)$ defined in its proof, we have
\begin{equation}\nonumber
\begin{aligned}
& c(x,\xi) \\
& \sim \sum_{\gamma \geq 0} \sum_{\kappa \geq 0} \varphi_{|\gamma|+|\kappa|}(\xi) (\gamma! \kappa!)^{-1} (2 \pi i)^{-|\kappa+\gamma|} (-1)^{|\kappa|}
\pdd \xi \gamma \left[ a(x,\xi)
\partial_x^{\gamma+\kappa}
\pdd \xi \kappa b(x,\xi) \right] \\
& = \sum_{\gamma \geq 0} \sum_{\kappa \geq 0}
\varphi_{|\gamma|+|\kappa|}(\xi)
\sum_{\la+\sigma=\gamma} \frac{(-1)^{|\kappa|}}{\kappa! \la! \sigma!}  (2 \pi i)^{-|\kappa+\gamma|}
\pdd \xi \la a(x,\xi)
\partial_x^{\gamma+\kappa} \partial_\xi^{\kappa+\sigma} b(x,\xi) \\
& = \sum_{\la \geq 0} (\la!)^{-1} \sum_{\mu \geq 0} \varphi_{|\la|+|\mu|}(\xi) (2 \pi i)^{-|\la+\mu|} \left( \sum_{\kappa+\sigma=\mu} \frac{(-1)^{|\kappa|}}{\kappa!\sigma!} \right)
\pdd \xi \la a(x,\xi)
\partial_x^{\la+\mu} \pdd \xi \mu b(x,\xi) \\
& = \sum_{\la \geq 0} \varphi_{|\la|}(\xi) (\la!)^{-1} (2 \pi i)^{-|\la|}
\pdd \xi \la a(x,\xi) \pdd x \la b(x,\xi),
\end{aligned}
\end{equation}
since
\begin{equation}\nonumber
\begin{aligned}
\sum_{\kappa+\sigma=\mu} \frac{(-1)^{|\kappa|}}{\kappa!\sigma!} = \frac{(e-e)^\mu}{\mu!} = \delta_{|\mu|}
\end{aligned}
\end{equation}
where $e=(1,1,\dots,1)$, and where $\delta_{k}$ is the Kronecker delta, i.e. $\delta_{k}=1$ for $k=0$ and $\delta_{k}=0$ for $k \neq 0$.
Lemma \ref{formalsumconstruction1} now shows that $c \sim a \circ b$, and \eqref{operatorcomposition1}, \eqref{operatorcomposition2} finally give
$$
a(x,D) b(x,D) = a(x,D) \ {^t} b'(x,D) + R_2 = c(x,D) + R_2 + R_3
$$
where $R_2 + R_3$ is regularizing.
\end{proof}

We may now state a regularity result for certain hypoelliptic a.p.~$\Psi DO$s.
We study the following class of hypoelliptic symbols \cite{Hormander1,Rodino1}.

\begin{defn}\label{aphypo1}
A symbol $a \in APS_{\rho,\de}^{m,s}(\rr {2d})$ is called formally $s$-hypoelliptic, denoted $a \in APHS_{\rho,\de}^{m,m_0,s}$ where $m_0 \leq m$, provided there exist $C,C_1,A,B > 0$ such that
\begin{align}
|a(x,\xi)|
\geq & \ C_1 {\eabs \xi}^{m_0}, \quad x \in \rr d, \quad |\xi| \geq A, \label{hypoelliptic1a} \\
\left| \left(  \pdd x \alpha \pdd \xi \beta a (x,\xi) \right) a(x,\xi)^{-1} \right|
\leq & \ C^{|\alpha|+|\beta|} (\alpha!)^{s(\rho-\delta)} \beta! {\eabs \xi}^{-\rho|\beta| + \delta|\alpha|}, \nonumber \\
& x \in \rr d, \quad |\xi| \geq \max(A,B|\beta|^s). \label{hypoelliptic1b}
\end{align}
\end{defn}

\begin{prop}\label{hypoelliptic1}
If $a \in APHS_{\rho,0}^{m,m_0,s}$ then there exists $b \in APS_{\rho,0}^{-m_0,s}$ such that $R_1$ and $R_2$ are regularizing, where
$$
R_1 = a(x,D) b(x,D) - I, \quad R_2 = b(x,D) a(x,D) - I.
$$
\end{prop}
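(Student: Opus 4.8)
The plan is to construct $b$ via Lemma~\ref{formalsumconstruction1} from a recursively defined formal sum $\sum_{j\geq0}b_j$ that inverts $a$ from the right in the symbolic calculus, and then to check that the same $b$ inverts $a$ from the left as well; the conclusion then follows at once from Propositions~\ref{compositioncalculus1} and~\ref{zeroregularizing1}.

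First I would set, for $\eabs\xi\geq A$, $b_0(x,\xi)=a(x,\xi)^{-1}$, which is well defined by \eqref{hypoelliptic1a}, and, for $j\geq1$,
\begin{equation}\nonumber
b_j(x,\xi)=-a(x,\xi)^{-1}\sum_{\substack{|\alpha|+l=j\\l<j}}\frac{(2\pi i)^{-|\alpha|}}{\alpha!}\,\pdd \xi \alpha a(x,\xi)\,\pdd x \alpha b_l(x,\xi),\qquad\eabs\xi\geq A.
\end{equation}
Since Definition~\ref{asymptoticexpansion1} only requires estimates for $\eabs\xi$ large, no cut-off is needed at this stage, which is what makes the argument work uniformly --- in particular also in the analytic case $s=1$. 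The key observation is that, grouping $a^{-1}\pdd \xi \alpha a$ together, the hypoellipticity bound \eqref{hypoelliptic1b} gives $|a^{-1}\pdd \xi \alpha a|\leq C^{|\alpha|}\alpha!\eabs\xi^{-\rho|\alpha|}$ uniformly in $x\in\rr d$; combined with \eqref{hypoelliptic1a}, \eqref{hypoelliptic1b} and Fa\`a di Bruno's formula for the derivatives of $a^{-1}$, this yields $b_0\in APS_{\rho,0}^{-m_0,s}(\rr{2d})$ and, by induction on $j$, the estimates of Definition~\ref{asymptoticexpansion1} for $(b_j)$ with $b_j$ of order $-m_0-\rho j$ and a single pair of constants $C,B>0$ valid for all $j$. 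Carrying this induction through --- keeping the combinatorial factors under control by the usual devices $(\gamma+\sigma)!\leq2^{|\gamma|+|\sigma|}\gamma!\,\sigma!$, $\sum_{\mu\leq\alpha}\binom\alpha\mu=2^{|\alpha|}$ and $(\alpha!)^{-1}(j+1)^{|\alpha|}\leq e^{d(j+1)}$, and using $s\rho\geq1$ --- is the technical heart of the proof. It parallels the local parametrix construction in \cite[Chapter 3]{Rodino1}, the only genuinely new feature being that all bounds must be global in $x\in\rr d$, which is harmless precisely because $\delta=0$ and the hypotheses \eqref{hypoelliptic1a}--\eqref{hypoelliptic1b} are themselves uniform in $x$.

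By Lemma~\ref{formalsumconstruction1} there is then $b\in APS_{\rho,0}^{-m_0,s}(\rr{2d})$ with $b\sim\sum_{j\geq0}b_j$. By the very definition of the $b_j$ the symbol product of $a$ with the formal sum $\sum_j b_j$ equals, for $\eabs\xi\geq A$, the formal sum whose $0$-th term is $a\,b_0=1$ and all of whose higher terms vanish; since the calculus respects $\sim$, this gives $a\circ b\sim1$. Proposition~\ref{compositioncalculus1} now yields $a(x,D)b(x,D)=c(x,D)+R$ with $R$ regularizing and $c\sim a\circ b\sim1$, so that $c-1\in APS_{\rho,0}^{m-m_0,s}(\rr{2d})$ --- here one uses $m_0\leq m$, so that the constant symbol $1$ lies in $APS_{\rho,0}^{0,s}\subseteq APS_{\rho,0}^{m-m_0,s}$ --- and $c-1\sim0$; by Proposition~\ref{zeroregularizing1} the operator $(c-1)(x,D)$ is regularizing. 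Since the symbol $1$ produces the identity operator in the quantization \eqref{extension2} (a direct limit computation, cf.\ Lemma~\ref{compositionlemma1}), we conclude that $R_1=a(x,D)b(x,D)-I=(c-1)(x,D)+R$ is regularizing. For $R_2$ I would run the analogous \emph{left} recursion, obtaining $\widetilde b\in APS_{\rho,0}^{-m_0,s}(\rr{2d})$ with $\widetilde b\circ a\sim1$; associativity of the symbol product modulo $\sim$ (proved as in \cite[Chapter 3]{Rodino1}, along the lines of the proof of Proposition~\ref{compositioncalculus1}) then gives $\widetilde b\sim\widetilde b\circ(a\circ b)\sim(\widetilde b\circ a)\circ b\sim b$, hence $b\circ a\sim1$, and repeating the previous argument with $b(x,D)a(x,D)$ in place of $a(x,D)b(x,D)$ shows that $R_2$ is regularizing. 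The main obstacle throughout is the uniform-in-$x$ inductive estimate of the formal sum $(b_j)$ described above.
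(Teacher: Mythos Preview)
Your proof is correct and follows essentially the same approach as the paper: construct a formal parametrix by the standard recursion (the paper cites \cite[Lemma 3.2.27]{Rodino1} for the estimates you sketch), realize it as a symbol via Lemma~\ref{formalsumconstruction1}, use associativity of $\circ$ modulo $\sim$ to match left and right parametrices, and then conclude via Propositions~\ref{compositioncalculus1} and~\ref{zeroregularizing1}. The only cosmetic difference is that the paper builds the left parametrix first and you build the right one first; also note that, strictly speaking, Definition~\ref{asymptoticexpansion1} requires each $b_j\in APS_{\rho,0}^{m,s}(\rr{2d})$, so the $b_j$ must be extended smoothly to small $\xi$ by a cut-off, though this does not affect the equivalence class or the symbol produced by Lemma~\ref{formalsumconstruction1}.
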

\begin{proof}
As in \cite[Lemma 3.2.27]{Rodino1} we can construct a formal sum $\sum_{j \geq 0} b_j$ such that $\sum_{j \geq 0} b_j \circ a \sim 1$ with $b_0 \in APS_{\rho,0}^{-m_0,s}$. By Lemma \ref{formalsumconstruction1} there exists a symbol $b \in APS_{\rho,0}^{-m_0,s}$ such that $b \sim \sum_{j \geq 0} b_j$. By \cite[Proposition 3.2.18]{Rodino1}, we have $b \circ a \sim 1$.
As in \cite[Lemma 3.2.27]{Rodino1} we also have $b' \in APS_{\rho,0}^{-m_0,s}$ such that $a \circ b' \sim 1$.
It can be verified straightforwardly that the symbol product is associative: $(a \circ b) \circ c \sim a \circ (b \circ c)$, which implies $b' \sim (b \circ a) \circ b' \sim b \circ (a \circ b') \sim b$.
Hence $a \circ b \sim 1$.

Now Proposition \ref{compositioncalculus1} shows that $a(x,D) b(x,D)=c_1(x,D) + R_1$ and $b(x,D) a(x,D)=c_2(x,D) + R_2$, where $R_1$, $R_2$ are regularizing and $c_1 \sim c_2 \sim 1$. By Proposition \ref{zeroregularizing1}, $c_1(x,D)-I$ and $c_2(x,D)-I$ are regularizing, which proves that $a(x,D) b(x,D)-I$ and $b(x,D) a(x,D)-I$ are regularizing.
\end{proof}

As a consequence of Proposition \ref{hypoelliptic1} we get a regularity result for hypoelliptic operators, under the assumption $\delta=0$.
The following result can be seen as a Gevrey space version of Shubin's result \cite[Theorem 5.3]{Shubin1} concerning regularity in a Sobolev space scale for a.p. pseudodifferential operators. See also \cite[Corollary 3.2]{Shubin4}.
If we assume \eqref{frequencyrequirement1} then $G_{\rm ap}^s(\rr d) \subseteq W_{s,0}^1(\rr d)$ by Proposition \ref{frequencyseparation1}.

\begin{cor}\label{hypoellipticregularity1}
If $a \in APHS_{\rho,0}^{m,m_0,s}$, $f \in W_{s,0}^1(\rr d)$ and $a(x,D)f\in G_{\rm ap}^s(\rr d)$ then $f \in G_{\rm ap}^s(\rr d)$.
\end{cor}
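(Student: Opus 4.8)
The plan is to invert $a(x,D)$ modulo a regularizing operator by means of the parametrix furnished by Proposition \ref{hypoelliptic1}, and then read off the regularity of $f$ from the two continuity properties already at our disposal: that pseudodifferential operators with symbols in $APS_{\rho,0}^{-m_0,s}(\rr {2d})$ act continuously on $G_{\rm ap}^s(\rr d)$ (Corollary \ref{gevreycont1}), and that regularizing operators map $W_{s,0}^1(\rr d)$ continuously into $G_{\rm ap}^s(\rr d)$.

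First I would fix $f \in W_{s,0}^1(\rr d)$ such that $g := a(x,D) f$ belongs to $G_{\rm ap}^s(\rr d)$; here $a(x,D) f$ is a priori well defined as an element of $(G_{\rm ap}^s)'(\rr d)$ by \eqref{apriori1}, and by hypothesis this element lies in the image of the embedding $G_{\rm ap}^s(\rr d) \subseteq (G_{\rm ap}^s)'(\rr d)$. By Proposition \ref{hypoelliptic1} there is a symbol $b \in APS_{\rho,0}^{-m_0,s}(\rr {2d})$ such that $R_2 = b(x,D) a(x,D) - I$ is regularizing. Evaluating the operator identity $b(x,D) a(x,D) = I + R_2$ at $f$ gives, in $(G_{\rm ap}^s)'(\rr d)$,
\begin{equation}\nonumber
b(x,D) g = b(x,D) \big( a(x,D) f \big) = f + R_2 f .
\end{equation}

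It then remains to identify the two terms on the right as elements of $G_{\rm ap}^s(\rr d)$. Since $g = a(x,D) f \in G_{\rm ap}^s(\rr d)$, Corollary \ref{gevreycont1} shows $b(x,D) g \in G_{\rm ap}^s(\rr d)$, while $R_2 f \in G_{\rm ap}^s(\rr d)$ because $R_2$ is regularizing and $f \in W_{s,0}^1(\rr d)$; hence $f = b(x,D) g - R_2 f \in G_{\rm ap}^s(\rr d)$, which is the claim, the identification being legitimate because the embeddings $W_{s,0}^1(\rr d) \subseteq (G_{\rm ap}^s)'(\rr d)$ from Proposition \ref{dualembedding1} and $G_{\rm ap}^s(\rr d) \subseteq (G_{\rm ap}^s)'(\rr d)$ are both injective and compatible on Fourier coefficients. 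I expect the only genuinely delicate point to be a compatibility check: that applying to $g$ the operator $b(x,D)$ in its original form on $G_{\rm ap}^s(\rr d)$ produces the same element of $(G_{\rm ap}^s)'(\rr d)$ as applying to $g$, regarded in $(G_{\rm ap}^s)'(\rr d)$, the weak$^*$ continuous extension of $b(x,D)$ that enters the composition $b(x,D) a(x,D)$. This reconciliation of the several incarnations of $b(x,D)$ follows by unwinding the definition \eqref{duality1}, using that $b(x,D)$ and its formal adjoint both preserve $G_{\rm ap}^s(\rr d)$ and that the inclusion $G_{\rm ap}^s \subseteq (G_{\rm ap}^s)'$ is realized through the Besicovitch pairing $(\cdot,\cdot)_B$; everything else is immediate from Proposition \ref{hypoelliptic1} and Corollary \ref{gevreycont1}.
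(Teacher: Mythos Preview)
Your proof is correct and follows essentially the same approach as the paper: apply the parametrix $b(x,D)$ from Proposition \ref{hypoelliptic1}, write $f = b(x,D)g - R_2 f$, and conclude using Corollary \ref{gevreycont1} and the definition of a regularizing operator. Your additional remarks on the compatibility of the various incarnations of $b(x,D)$ via the duality \eqref{duality1} are more explicit than what the paper spells out, but the underlying argument is identical.
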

\begin{proof}
Proposition \ref{hypoelliptic1} gives $b \in APS_{\rho,0}^{-m_0,s}$ such that $R=b(x,D)a(x,D)-I$ is regularizing. If $g=a(x,D)f$ we have $b(x,D) g = b(x,D) a(x,D)f= f + R f$, and thus $f = b(x,D) g - R f \in G_{\rm ap}^s(\rr d)$, since $R$ is regularizing and $b(x,D) g \in G_{\rm ap}^s(\rr d)$ due to Corollary \ref{gevreycont1}.
\end{proof}

%%%%%%%%%%%%%%%%%%%%%%%%%%%%%%%%%%%%%%%%%%%%%%%%%%%%%%%%%%%%%%%%%%%%%%%%%%%%%%%
\section{Applications to operators of constant strength}\label{sectionconstant}
%%%%%%%%%%%%%%%%%%%%%%%%%%%%%%%%%%%%%%%%%%%%%%%%%%%%%%%%%%%%%%%%%%%%%%%%%%%%%%%

Let us recall some definitions from \cite{Hormander0,Hormander1,Rodino1}.
A polynomial $P(\xi)$, $\xi \in \rr d$, is called $s$-hypoelliptic, $1 \leq s < \infty$, if for some $\rho$, $0 < \rho \leq 1$, $\rho \geq 1/s$,
\begin{equation}\label{hypoellipticpolynomial1}
| \pd \beta P(\xi) | \leq C |P(\xi)| (1+|\xi|)^{-\rho |\beta|}, \quad |\xi| \geq A,
\end{equation}
for suitable constants $C>0$, $A \geq 0$.
Moreover, for a given polynomial $P(\xi)$, $\xi \in \rr d$, we set
\begin{equation}\label{strength1}
\wt P(\xi) = \left( \sum_{\alpha \in \nn d} | \pd \alpha P(\xi) |^2 \right)^{1/2}.
\end{equation}
We say that a polynomial $Q(\xi)$ is weaker than $P(\xi)$ if for a suitable $C>0$
\begin{equation}\label{weaker1}
\wt Q(\xi) \leq C \wt P(\xi), \quad \xi \in \rr d.
\end{equation}
If for some $C>0$ we have $C^{-1} \wt P(\xi) \leq \wt Q(\xi) \leq C \wt P(\xi)$ for all $\xi \in \rr d$
we say that the polynomials $Q$ and $P$ are equally strong.
A partial differential operator with symbol $p(x,\xi)$
is said to have constant strength if the polynomials $p(x,\cdot)$ and $p(y,\cdot)$, obtained by ``freezing'' the space variable at two different $x,y \in \rr d$, are equally strong for any $x,y \in \rr d$.

Suppose $P$ is a linear partial differential operator
$$
P = \sum_{|\alpha| \leq m} a_\alpha(x) \left(\frac{D}{2 \pi}\right)^\alpha
$$
of constant strength with coefficients in $a_\alpha \in G_{\rm ap}^s(\rr d)$ and symbol $p(x,\xi) = \sum_{|\alpha| \leq m} a_\alpha(x) \ \xi^\alpha$. Set $P_0(\xi)=p(x_0,\xi)$ for any $x_0 \in \rr d$ and let $P_0(\xi),P_1(\xi)$, $\dots, P_r(\xi)$ be a basis of the finite-dimensional vector space of the polynomials weaker than $P_0(\xi)$.
Then we may write
\begin{equation}\label{operatorfamily1}
P = \sum_{j=0}^r c_j(x) P_j \left( \frac{D}{2 \pi} \right)
\end{equation}
which has symbol
\begin{equation}\label{symbolfamily1}
p(x,\xi) = \sum_{j=0}^r c_j(x) P_j (\xi), \quad x, \ \xi \in \rr d,
\end{equation}
where $c_j \in G_{\rm ap}^s(\rr d)$, $0 \leq j \leq r$.

We now consider a converse situation.
Let $s \geq 1$ and let $P_0(\xi)$ be a fixed $s$-hypoelliptic polynomial of order $m$, and let $P_0(\xi),P_1(\xi)$, $\dots, P_r(\xi)$ be a basis in the finite-dimensional vector space of the polynomials weaker than $P_0(\xi)$.
We consider partial differential operators of the form \eqref{operatorfamily1} with symbol \eqref{symbolfamily1}.
Suppose that $p$ satisfies
\begin{align}
c_j \in G_{\rm ap}^1(\rr d), \quad  j=0,1,\dots, r, \quad \mbox{and} \label{pcondition1} \\
|p(x,\xi)| \geq \ep |P_0 (\xi)|, \quad |\xi| \geq A, \quad x \in \rr d, \label{pcondition2}
\end{align}
for some constants $\ep>0$, $A \geq 0$.

\begin{prop}
If the symbol of the partial differential operator $P$ defined by \eqref{operatorfamily1} satisfies \eqref{pcondition1} and \eqref{pcondition2} then $P$ has constant strength.
\end{prop}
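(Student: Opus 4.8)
The plan is to reduce the statement to the assertion that for every $x\in\rr d$ the frozen polynomial $p(x,\cdot)=\sum_{j=0}^r c_j(x)P_j$ is equally strong to the fixed $s$-hypoelliptic polynomial $P_0$, with a constant that may in fact be taken independently of $x$. Since the relation ``equally strong'' is transitive, applying it to the triple $p(x,\cdot)$, $P_0$, $p(y,\cdot)$ for arbitrary $x,y\in\rr d$ then yields constant strength of $P$.

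First I would record the standard consequence of \eqref{hypoellipticpolynomial1} that $\wt{P_0}(\xi)\asymp|P_0(\xi)|$ for $|\xi|\ge A$: summing the squares in \eqref{strength1} over the finitely many nonvanishing $\pd\beta P_0$ gives $\wt{P_0}(\xi)\le C|P_0(\xi)|$ there, while $|P_0(\xi)|\le\wt{P_0}(\xi)$ always; moreover the top-order part of $P_0$, fully differentiated, is a nonzero constant, so $\wt{P_0}$ is bounded below on $\rr d$ by a positive constant, whence $|P_0|$ is bounded below by a positive constant on $\{|\xi|\ge A\}$ and bounded above (being continuous) on $\{|\xi|\le A\}$. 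The ``$\le$'' half of the equal-strength estimate is then easy: Minkowski's inequality in \eqref{strength1} gives $\wt{p(x,\cdot)}(\xi)\le\sum_{j=0}^r|c_j(x)|\,\wt{P_j}(\xi)$, and since each $P_j$ is weaker than $P_0$ and each $c_j$ is bounded by \eqref{pcondition1}, this is $\le C\wt{P_0}(\xi)$ with $C$ independent of $x$. For the ``$\ge$'' half and $|\xi|\ge A$, hypothesis \eqref{pcondition2} and the previous step give $\wt{p(x,\cdot)}(\xi)\ge|p(x,\xi)|\ge\ep|P_0(\xi)|\ge\ep C^{-1}\wt{P_0}(\xi)$.

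The genuinely delicate point is the ``$\ge$'' half on the compact region $\{|\xi|\le A\}$, on which \eqref{pcondition2} says nothing directly. The key observation is that \eqref{pcondition2} together with $P_0\not\equiv 0$ forces $p(x,\cdot)$ to be a nonzero polynomial, since $|p(x,\xi)|\ge\ep|P_0(\xi)|$ is bounded below by a positive constant for $|\xi|\ge A$. Hence $\wt{p(x,\cdot)}$ is bounded below on $\rr d$ by a positive constant $\de_x$ (again via the top-order term), and comparing with the upper bound $M$ for $\wt{P_0}$ on $\{|\xi|\le A\}$ gives $\wt{p(x,\cdot)}(\xi)\ge\de_x M^{-1}\wt{P_0}(\xi)$ there. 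Combined with the estimate for $|\xi|\ge A$ this produces $C^{-1}\wt{P_0}\le\wt{p(x,\cdot)}\le C\wt{P_0}$ on $\rr d$, which completes the reduction.

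To make the constant uniform in $x$ (not required by the definition, but natural), one would replace $\de_x$ by $\inf\{\wt{Q_v}(\xi):v\in\bar V,\ |\xi|\le A\}$, where $Q_v=\sum_{j=0}^r v_jP_j$ and $\bar V\subseteq\cc{r+1}$ is the compact closure of $\{(c_0(x),\dots,c_r(x)):x\in\rr d\}$ (compact because the $c_j$ are bounded). This infimum is positive: $Q_v\not\equiv 0$ for every $v\in\bar V$, as seen by passing to the limit in $|p(x_n,\eta)|\ge\ep|P_0(\eta)|$ along a sequence with $(c_j(x_n))\to v$, at a fixed $\eta$ with $|\eta|\ge A$ (where $|P_0(\eta)|>0$), and then invoking continuity of $(v,\xi)\mapsto\wt{Q_v}(\xi)$ on the compact set $\bar V\times\{|\xi|\le A\}$.
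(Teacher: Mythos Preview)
Your proof is correct and follows essentially the same route as the paper: show $\wt{p(x,\cdot)}\le C\wt{P_0}$ by linearity and boundedness of the $c_j$, and show the reverse inequality by splitting into $|\xi|\ge A$ (where \eqref{pcondition2} applies) and $|\xi|\le A$ (where one argues that $p(x,\cdot)$ is a nonzero polynomial via its top-order part, so $\wt{p(x,\cdot)}$ has a positive lower bound). The only cosmetic difference is that the paper first establishes $|P_0(\xi)|\le C\,\wt{p}(x,\xi)$ on all of $\rr d$ and then invokes \cite[Lemma~3.3.15]{Rodino1} to upgrade this to $\wt{P_0}\le C'\,\wt{p}(x,\cdot)$, whereas you bypass that lemma by using $\wt{P_0}\asymp|P_0|$ for large $|\xi|$ (Lemma~\ref{finalresultlemma1}) directly; your additional paragraph on uniformity in $x$ is a bonus not present in the paper.
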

\begin{proof}
Let $x \in \rr d$ be arbitrary and consider $p(x,\cdot)$ as a polynomial.
We have for any $\alpha \in \nn d$
\begin{equation}\nonumber
\begin{aligned}
|\pdd \xi \alpha p(x,\xi)|^2 & \leq \left( \sum_{j=0}^r |c_j(x)| |\pd \alpha P_j(\xi) |\right)^2
\leq (r+1) \sum_{j=0}^r |c_j(x)|^2 |\pd \alpha P_j(\xi) |^2 \\
& \leq C \sum_{j=0}^r |\pd \alpha P_j(\xi) |^2
\end{aligned}
\end{equation}
for some $C>0$ that does not depend on $x$. Hence
\begin{equation}\nonumber
\wt p(x,\xi)^2 = \sum_{\alpha \in \nn d} |\pdd \xi \alpha p(x,\xi)|^2
\leq C \sum_{j=0}^r \wt P_j(\xi)^2
\leq C_1 \wt P_0(\xi)^2, \quad \xi \in \rr d
\end{equation}
for some $C_1>0$, since $P_j$, $j=0,1,\dots,r$, are weaker that $P_0$ by assumption.
Thus $p(x,\cdot)$ is weaker than $P_0$.
On the other hand, the assumption \eqref{pcondition2} gives
\begin{equation}\label{oppositeinequality1}
|P_0 (\xi)| \leq C_2 \wt p(x,\xi), \quad \xi \in \rr d,
\end{equation}
for some $C_2>0$.
In fact, for $|\xi| \geq A$ this follows directly from \eqref{pcondition2}.
For $|\xi|<A$, we observe that
\begin{equation}\label{highestorder1}
C_x := \sum_{|\alpha|=m} |a_\alpha(x)|^2 > 0,
\end{equation}
since otherwise it would result
$|p(x,t \xi)| \leq C t^{m-1}$ for $\xi \in \rr d$ fixed and $t \geq 1$,
which contradicts \eqref{pcondition2} and the assumption that the order of $P_0$ is $m$.
Since $\wt p(x,\xi)^2 \geq C_x$ we may conclude that \eqref{oppositeinequality1} is satisfied also for $|\xi|<A$.

Now \eqref{oppositeinequality1} and \cite[Lemma 3.3.15]{Rodino1} imply that we have $\wt P_0(\xi) \leq C_3 \wt p(x,\xi)$, $\xi \in \rr d$, for $C_3>0$, and hence $P_0$ is weaker than $p(x,\cdot)$. It follows that $p(x,\cdot)$ and $p(y,\cdot)$ are equally strong for all $x,y \in \rr d$.
\end{proof}

In the proof of our final result Theorem \ref{constantstrengthhypoelliptic1} we need the following two lemmas.
Their proofs can be found for example in \cite[Lemma 3.3.14]{Rodino1} and \cite[Lemma 3.3.17]{Rodino1}, respectively.

\begin{lem}\label{finalresultlemma1}
If the polynomial $Q(\xi)$ is $s$-hypoelliptic for some $s \geq 1$, then there exists $C>0$ such that
$$
|Q(\xi)| \leq \wt Q(\xi) \leq C |Q(\xi)|, \quad |\xi|>C.
$$
\end{lem}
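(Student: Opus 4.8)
The plan is to read off both inequalities directly from the definition \eqref{hypoellipticpolynomial1} of $s$-hypoellipticity, exploiting the elementary fact that for a polynomial the sum in \eqref{strength1} is finite.

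The left inequality $|Q(\xi)| \leq \wt Q(\xi)$ is immediate for every $\xi \in \rr d$: the $\alpha = 0$ term in $\wt Q(\xi)^2 = \sum_\alpha |\pd \alpha Q(\xi)|^2$ equals $|Q(\xi)|^2$ and all the remaining terms are nonnegative. For the right inequality I would put $m = \deg Q$, so that $\pd \alpha Q \equiv 0$ for $|\alpha| > m$ and the sum defining $\wt Q(\xi)^2$ runs only over the $N_m := \#\{\alpha \in \nn d : |\alpha| \leq m\}$ multi-indices with $|\alpha| \leq m$. Let $\rho \in (0,1]$ with $\rho \geq 1/s$, and let $C_0 > 0$, $A \geq 0$ be the constants in \eqref{hypoellipticpolynomial1}. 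For $|\xi| \geq A$ each term obeys $|\pd \alpha Q(\xi)|^2 \leq C_0^2 |Q(\xi)|^2 (1+|\xi|)^{-2\rho|\alpha|} \leq C_0^2 |Q(\xi)|^2$, hence
\begin{equation}\nonumber
\wt Q(\xi)^2 \leq C_0^2 \, N_m \, |Q(\xi)|^2, \qquad |\xi| \geq A .
\end{equation}
Choosing $C \geq \max(A, C_0 N_m^{1/2})$ then yields $\wt Q(\xi) \leq C |Q(\xi)|$ for $|\xi| > C$, which together with the first inequality is the claim.

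I do not expect a genuine obstacle, the proof being a one-line estimate. The only points that merit a word of care are the following. First, the constant $C$ in \eqref{hypoellipticpolynomial1} may a priori depend on $\beta$, but only the finitely many $\beta$ with $|\beta| \leq m$ contribute, so replacing it by the maximum over this finite set is harmless. Second, if $Q(\xi_0) = 0$ for some $\xi_0$ with $|\xi_0| \geq A$, then \eqref{hypoellipticpolynomial1} forces $\pd \beta Q(\xi_0) = 0$ for all $\beta$, whence $Q \equiv 0$ and the statement holds trivially; otherwise $Q$ is zero-free for $|\xi| \geq A$ and the displayed bounds are meaningful.
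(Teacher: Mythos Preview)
Your argument is correct. The paper does not give its own proof of this lemma but defers to \cite[Lemma 3.3.14]{Rodino1}; your one-line estimate from the definition \eqref{hypoellipticpolynomial1} together with the finiteness of the sum in \eqref{strength1} is precisely the standard argument one finds there.
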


\begin{lem}\label{finalresultlemma2}
Let $Q_1,Q_2$ be polynomials.
If $Q_1(\xi)/Q_2(\xi)$ is bounded for large $|\xi|$ and $Q_2(\xi)$ is $s$-hypoelliptic for some $s \geq 1$
then there exists $C>0$ and $A \geq 0$ such that
$$
|\pd \beta Q_1(\xi)| \leq C |Q_2(\xi)| (1+|\xi|)^{-|\beta|/s}, \quad |\xi| \geq A.
$$
\end{lem}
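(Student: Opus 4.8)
The plan is to deduce the estimate from the $s$-hypoellipticity of $Q_2$ by combining three ingredients: the defining inequality \eqref{hypoellipticpolynomial1} for $Q_2$, a slow-variation (substitution) lemma for $|Q_2|$ under shifts of size $(1+|\xi|)^\rho$, and an exact finite-difference representation of $\pd \beta Q_1$ with a step size adapted to $\xi$.

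First I would fix $\rho$ with $1/s \le \rho \le 1$ and constants $C_2, A_0$ so that $|\pd \gamma Q_2(\xi)| \le C_2 |Q_2(\xi)| (1+|\xi|)^{-\rho|\gamma|}$ for $|\xi| \ge A_0$ and all $\gamma$ (only $|\gamma| \le \deg Q_2$ matter, since $\pd \gamma Q_2 \equiv 0$ otherwise); in particular $Q_2(\xi) \ne 0$ for $|\xi| \ge A_0$, for otherwise all derivatives of $Q_2$ would vanish there and $Q_2 \equiv 0$. Next I would establish the substitution lemma: Taylor-expanding the polynomial $Q_2$ gives
$$
Q_2(\xi+v) - Q_2(\xi) = \sum_{1 \le |\gamma| \le \deg Q_2} \frac{v^\gamma}{\gamma!} \pd \gamma Q_2(\xi),
$$
and for $|v| \le r (1+|\xi|)^\rho$ the hypoellipticity estimate bounds the right-hand side by $C_2 |Q_2(\xi)| \sum_{1 \le |\gamma| \le \deg Q_2} r^{|\gamma|}/\gamma! \le C_2 (e^{rd}-1) |Q_2(\xi)|$; choosing $r$ small enough one obtains $\tfrac12 |Q_2(\xi)| \le |Q_2(\xi+v)| \le \tfrac32 |Q_2(\xi)|$ whenever $|\xi| \ge A_0$ and $|v| \le r(1+|\xi|)^\rho$.

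For the last ingredient I would use the elementary fact that for a one-variable polynomial $R$ of degree $\le m$ there are universal constants $c_{k,j}$, $0 \le j \le m$, with $R^{(k)}(t) = h^{-k} \sum_j c_{k,j} R(t+jh)$ for every $h>0$ and $t$; taking tensor products this yields, for a $d$-variable polynomial $R$ of degree $\le m$,
$$
\pd \beta R(\xi) = h^{-|\beta|} \sum_{w \in \{0,\dots,m\}^d} c_{\beta,w}\, R(\xi + hw), \qquad |c_{\beta,w}| \le C_m,
$$
a finite sum over $w$ with $|w| \le m \sqrt d$. Applying this with $R = Q_1$, $m = m_1 := \deg Q_1$, and step $h = h(\xi) := \kappa (1+|\xi|)^\rho$, where $\kappa$ is chosen so small that $\kappa\, m_1 \sqrt d$ lies below the substitution radius $r$, and taking $|\xi| \ge A$ large enough that each $|\xi + hw| \ge |\xi|/2$ exceeds the threshold $A_2$ beyond which $|Q_1(\eta)| \le M |Q_2(\eta)|$, I would estimate
$$
|\pd \beta Q_1(\xi)| \le C_{m_1} h^{-|\beta|} \sum_w |Q_1(\xi+hw)| \le M C_{m_1} h^{-|\beta|} \sum_w |Q_2(\xi+hw)| \le \tfrac32 M C_{m_1} (m_1+1)^d |Q_2(\xi)|\, h^{-|\beta|}.
$$
Since $h^{-|\beta|} = \kappa^{-|\beta|} (1+|\xi|)^{-\rho|\beta|}$ with $|\beta| \le m_1$ (the estimate being trivial otherwise), the factor $\kappa^{-|\beta|} \le \kappa^{-m_1}$ is uniform in $\beta$, and $(1+|\xi|)^{-\rho|\beta|} \le (1+|\xi|)^{-|\beta|/s}$ because $\rho \ge 1/s$; this gives the assertion with $C = \tfrac32 M C_{m_1} (m_1+1)^d \kappa^{-m_1}$ and $A$ as above.

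The step I expect to be the main obstacle is the substitution lemma in conjunction with the $\xi$-dependent choice of grid step: one has to check that $|Q_2|$ remains comparable under the growing shifts $|v| \lesssim (1+|\xi|)^\rho$, which is exactly the place where genuine hypoellipticity of $Q_2$ — and not merely a degree bound — is needed, since for an arbitrary polynomial only bounded shifts preserve the order of magnitude of $|Q_2|$. The remaining work is bookkeeping: verifying that the finite-difference identity has the stated $h^{-|\beta|}$ scaling, and checking that the thresholds $A_0$, $r$, $A_2$ can be made mutually compatible and the constants uniform in $\beta$ (which is automatic since only $|\beta| \le \deg Q_1$ occur).
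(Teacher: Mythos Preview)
Your argument is correct and complete. The three ingredients you isolate---the hypoellipticity estimate for $Q_2$, the slow-variation lemma under shifts of size $\lesssim (1+|\xi|)^\rho$, and the exact finite-difference representation of $\pd \beta Q_1$ with step $h=\kappa(1+|\xi|)^\rho$---combine exactly as you describe; the only thing to watch is that the degree bound used in the finite-difference identity is the degree of $Q_1$ in \emph{each} variable, which is indeed $\le m_1=\deg Q_1$, so the tensor-product formula is valid.

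The paper does not actually prove this lemma: it simply refers to \cite[Lemma~3.3.17]{Rodino1} for the proof. So there is nothing to compare in detail. For orientation, the argument in \cite{Rodino1} (going back to H\"ormander) proceeds via the equivalence $|Q_2(\xi)| \asymp \wt Q_2(\xi)$ and an averaging identity of the type $\int_{|\theta|\le r}|P(\xi+\theta)|^2\,d\theta \asymp \sum_\alpha r^{2|\alpha|}|\pd\alpha P(\xi)|^2$, applied with the $\xi$-dependent radius $r=(1+|\xi|)^\rho$; this replaces your discrete finite-difference formula by a continuous average but rests on exactly the same slow-variation principle you identify as the crux. Your version is slightly more elementary in that it avoids the $\wt P$ machinery and the integral identity, at the cost of invoking the Vandermonde-based finite-difference coefficients; both arguments are of comparable length and yield the same constants up to harmless factors.
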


\begin{thm}\label{constantstrengthhypoelliptic1}
Let $P$ be an operator of the form \eqref{operatorfamily1} where \eqref{pcondition1} and \eqref{pcondition2} are satisfied.
If $s \geq 1$, $f \in W_{s,0}^1(\rr d)$ and $P f \in G_{\rm ap}^s(\rr d)$ then $f \in G_{\rm ap}^s(\rr d)$.
\end{thm}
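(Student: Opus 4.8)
The plan is to deduce the statement from Corollary \ref{hypoellipticregularity1}, applied to the pseudodifferential operator $p(x,D)$ with symbol $p(x,\xi) = \sum_{j=0}^r c_j(x) P_j(\xi)$, with the parameter choice $\rho = 1/s$ (so that $0 < \rho \le 1$ and $s\rho = 1$, making $APS_{\rho,0}^{m,s}$ admissible). Since $p$ is a polynomial in $\xi$ with coefficients the $a_\alpha$, the formula \eqref{extension2} reduces on $C_b^\infty(\rr d)$ to $p(x,D) = \sum_{|\alpha|\le m} a_\alpha(x)(D/2\pi)^\alpha = P$, and the pseudodifferential extension \eqref{duality1} of $p(x,D)$ to $(G_{\rm ap}^s)'(\rr d)$ agrees with the action of the differential operator $P$ there; hence $Pf = p(x,D)f$ for $f \in W_{s,0}^1(\rr d) \subseteq (G_{\rm ap}^s)'(\rr d)$. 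It therefore suffices to show that $p \in APHS_{1/s,0}^{m,0,s}$.

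Fix $\rho = 1/s$. That $p \in APS_{\rho,0}^{m,s}(\rr {2d})$ with $p(\cdot,\xi) \in C_{\rm ap}(\rr d)$ follows by differentiating the finite sum term by term: $c_j \in G_{\rm ap}^1(\rr d) \subseteq C_{\rm ap}(\rr d)$ gives $|\pdd x \beta c_j(x)| \le C^{1+|\beta|}\beta! = C^{1+|\beta|}(\beta!)^{s\rho}$, while the polynomials $P_j$ satisfy $|\pdd \xi \alpha P_j(\xi)| \le C^{1+|\alpha|}\alpha! \eabs{\xi}^{m-|\alpha|} \le C^{1+|\alpha|}\alpha! \eabs{\xi}^{m-\rho|\alpha|}$ for $\eabs{\xi}\ge 1$, the left side vanishing when $|\alpha| > m = \deg P_0$.

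The heart of the matter is to verify the two estimates of Definition \ref{aphypo1}. For the lower bound \eqref{hypoelliptic1a}: the leading coefficients of the degree-$m$ polynomial $P_0$ give $\wt{P_0}(\xi) \ge c_0 > 0$ for all $\xi$, while Lemma \ref{finalresultlemma1} gives $\wt{P_0}(\xi) \le C|P_0(\xi)|$ for $|\xi|$ large, so $|P_0(\xi)| \ge c_0/C > 0$ for $|\xi|$ large; then \eqref{pcondition2} yields $|p(x,\xi)| \ge \ep|P_0(\xi)| \ge C_1 = C_1 \eabs{\xi}^0$ for $x \in \rr d$, $|\xi| \ge A_1$, which is \eqref{hypoelliptic1a} with $m_0 = 0 \le m$. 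For the derivative bound \eqref{hypoelliptic1b}, write $\pdd x \alpha \pdd \xi \beta p(x,\xi) = \sum_{j=0}^r (\pd \alpha c_j)(x)(\pd \beta P_j)(\xi)$. Since $P_j$ is weaker than $P_0$ we have $|P_j(\xi)| \le \wt{P_j}(\xi) \le C\wt{P_0}(\xi) \le C|P_0(\xi)|$ for $|\xi|$ large, so $P_j/P_0$ is bounded at infinity; as $P_0$ is $s$-hypoelliptic, Lemma \ref{finalresultlemma2} gives $|\pd \beta P_j(\xi)| \le C|P_0(\xi)|(1+|\xi|)^{-|\beta|/s}$ for $|\xi| \ge A_2$, uniformly over the finitely many $j$ and all $\beta$ (trivially for $|\beta|>m$). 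Combining with $|\pd \alpha c_j(x)| \le C^{1+|\alpha|}\alpha! = C^{1+|\alpha|}(\alpha!)^{s\rho}$, summing over $j$, absorbing the finitely many resulting constants into factors $C^{|\beta|}\beta!$, and dividing by $|p(x,\xi)| \ge \ep|P_0(\xi)|$, one obtains
\begin{equation}\nonumber
\left| \left( \pdd x \alpha \pdd \xi \beta p(x,\xi) \right) p(x,\xi)^{-1} \right| \le C^{|\alpha|+|\beta|} (\alpha!)^{s\rho} \beta! \eabs{\xi}^{-\rho|\beta|}, \quad x \in \rr d, \quad |\xi| \ge \max(A_1,A_2),
\end{equation}
which is \eqref{hypoelliptic1b} with $\delta = 0$ (taking $B = 1$ and enlarging $A$ in \eqref{pcondition2} to $\max(A_1,A_2)$). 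Hence $p \in APHS_{1/s,0}^{m,0,s}$, and Corollary \ref{hypoellipticregularity1} applied to $a = p$, using $f \in W_{s,0}^1(\rr d)$ and $Pf = p(x,D)f \in G_{\rm ap}^s(\rr d)$, gives $f \in G_{\rm ap}^s(\rr d)$.

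The main obstacle is the verification of \eqref{hypoelliptic1b}: one must choose $\rho = 1/s$ precisely so that the $\alpha!$-growth of the analytic coefficients $c_j$ is matched by $(\alpha!)^{s\rho}$, and one must line up the decay $(1+|\xi|)^{-|\beta|/s}$ furnished by Lemma \ref{finalresultlemma2} with the required power $\eabs{\xi}^{-\rho|\beta|}$. The remaining steps are the routine assembly of Lemmas \ref{finalresultlemma1} and \ref{finalresultlemma2}, the hypotheses \eqref{pcondition1}--\eqref{pcondition2}, and Corollary \ref{hypoellipticregularity1}.
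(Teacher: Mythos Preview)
Your proof is correct and follows essentially the same route as the paper: verify $p \in APHS_{1/s,0}^{m,m_0,s}$ by combining \eqref{pcondition1}, \eqref{pcondition2}, Lemma~\ref{finalresultlemma1} and Lemma~\ref{finalresultlemma2}, then invoke Corollary~\ref{hypoellipticregularity1}. The only difference is cosmetic: for the lower bound \eqref{hypoelliptic1a} you take $m_0=0$ via $\wt{P_0}(\xi)\ge c_0>0$ and Lemma~\ref{finalresultlemma1}, whereas the paper obtains the sharper $m_0=\rho m$ directly from \eqref{hypoellipticpolynomial1} applied with $|\beta|=m$; since Corollary~\ref{hypoellipticregularity1} does not care about the value of $m_0$, both choices work.
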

\begin{proof}
It suffices to prove that the symbol $p(x,\xi)$ satisfies conditions \eqref{hypoelliptic1a} and \eqref{hypoelliptic1b} for $\delta=0$.
Corollary \ref{hypoellipticregularity1} will then give the conclusion.

Observe first that the estimates \eqref{hypoellipticpolynomial1} where $0 < \rho \leq 1$, valid for $P_0(\xi)$ by assumption, imply
$$
|P_0(\xi)| \geq \ep_1 \eabs{\xi}^{\rho m}, \quad |\xi| \geq A,
$$
for some $\ep_1>0$, since the order of $P_0$ is $m$ by assumption. Hence from \eqref{pcondition2}
$$
|p(x,\xi)| \geq \ep \ep_1 \eabs{\xi}^{\rho m}, \quad |\xi| \geq A, \quad x \in \rr d,
$$
and \eqref{hypoelliptic1a} is satisfied with $m_0=\rho m$.

On the other hand we have
\begin{equation}\label{symbolestimate3}
| \pdd x \alpha \pdd \xi \beta p(x,\xi)|
\leq \sum_{j=0}^r |\pd \alpha c_j(x)| | \pd \beta P_j(\xi)|.
\end{equation}
Observe that
\begin{equation}\nonumber
|P_j(\xi)| \leq \wt P_j(\xi) \leq C_1 \wt P_0(\xi) \leq C_2 |P_0(\xi)|, \quad |\xi| \geq A, \quad j=0,1,\dots,r,
\end{equation}
$C_1,C_2>0$, $A \geq 0$, because $P_j$ is assumed to be weaker than $P_0$, and we may use Lemma \ref{finalresultlemma1} to obtain the last estimate.
Hence we can apply Lemma \ref{finalresultlemma2} and obtain
\begin{equation}\nonumber
|\pd \beta P_j(\xi)| \leq C |P_0(\xi)| (1+|\xi|)^{-|\beta|/s}, \quad |\xi| \geq A \geq 0, \quad j=0,1,\dots,r.
\end{equation}
Applying \eqref{pcondition1} to estimate $|\pd \alpha c_j(x)|$
and \eqref{pcondition2}, we conclude from \eqref{symbolestimate3}
\begin{equation}\nonumber
| \pdd x \alpha \pdd \xi \beta p(x,\xi)|
\leq C^{1+|\alpha|} \alpha!  |p(x,\xi)| \eabs{\xi}^{-|\beta|/s}, \quad x \in \rr d, \quad |\xi| \geq A,
\end{equation}
for some $C>0$ and $A \geq 0$.
Thus \eqref{hypoelliptic1b} is satisfied with $\rho=1/s$ and $\delta=0$.
The proof is concluded.
\end{proof}

\begin{cor}
Suppose that $P$ is a linear partial differential operator of constant strength with coefficients in $G_{\rm ap}^1(\rr d)$, such that the symbol $p(x,\xi)$ satisfies: \\
\indent \rm{(i)} \ $p(x_0,\cdot)$ is a hypoelliptic polynomial for some $x_0 \in \rr d$, \\
\indent \rm{(ii)} \ $|p(x,\xi)| \geq \ep |p (x_0,\xi)|$ for $|\xi| \geq A \geq 0$, $x \in \rr d$, where $\ep>0$. \\
Then $s \geq 1$, $f \in W_{s,0}^1(\rr d)$ and $P f \in G_{\rm ap}^s(\rr d)$ imply $f \in G_{\rm ap}^s(\rr d)$.
\end{cor}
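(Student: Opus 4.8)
The plan is to reduce the statement to Theorem~\ref{constantstrengthhypoelliptic1}: I will exhibit $P$ in the form \eqref{operatorfamily1} and verify the conditions \eqref{pcondition1} and \eqref{pcondition2} for an appropriate reference polynomial $P_0$, after which the corollary follows at once. First I would take $P_0(\xi):=p(x_0,\xi)$. By hypothesis (i) this polynomial is hypoelliptic, so by the classical theory of hypoelliptic polynomials (see \cite{Hormander1,Rodino1}) it satisfies the quantitative estimate \eqref{hypoellipticpolynomial1} with some $\rho\in(0,1]$; in particular $P_0$ is $s$-hypoelliptic for every $s\geq 1/\rho$, which is the range of Gevrey indices for which the conclusion is asserted. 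Let $P_0(\xi),P_1(\xi),\dots,P_r(\xi)$ be a basis of the vector space of polynomials weaker than $P_0$, which is finite dimensional by H\"ormander's theory.

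Next I would use the constant strength hypothesis. For every $x\in\rr d$ the polynomial $p(x,\cdot)$ is equally strong to $p(x_0,\cdot)=P_0$, hence in particular weaker than $P_0$, so it lies in the span of $P_0,\dots,P_r$ and we may write $p(x,\xi)=\sum_{j=0}^r c_j(x)P_j(\xi)$ for uniquely determined functions $c_j$; comparing symbols this shows $P=\sum_{j=0}^r c_j(x)P_j(D/(2\pi))$, which is exactly the form \eqref{operatorfamily1} with symbol \eqref{symbolfamily1}. To obtain \eqref{pcondition1} I would write $P_j(\xi)=\sum_{|\alpha|\leq m}b_{j,\alpha}\xi^\alpha$ and match coefficients with $p(x,\xi)=\sum_{|\alpha|\leq m}a_\alpha(x)\xi^\alpha$ (note that $p(x,\cdot)$ has degree at most $m=\deg P_0$, being weaker than $P_0$), obtaining $a_\alpha(x)=\sum_{j=0}^r b_{j,\alpha}c_j(x)$ for all $\alpha$. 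Since the $P_j$ are linearly independent, this linear system admits a fixed left inverse, so each $c_j$ is a fixed finite linear combination of the $a_\alpha$; as $a_\alpha\in G_{\rm ap}^1(\rr d)$ and $G_{\rm ap}^1(\rr d)$ is a vector space, $c_j\in G_{\rm ap}^1(\rr d)$, which is \eqref{pcondition1}. Finally, \eqref{pcondition2} is precisely hypothesis (ii), since $P_0=p(x_0,\cdot)$.

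Having verified that $P$ has the form \eqref{operatorfamily1} with \eqref{pcondition1} and \eqref{pcondition2} in force, I would simply invoke Theorem~\ref{constantstrengthhypoelliptic1} to conclude that $f\in W_{s,0}^1(\rr d)$ and $Pf\in G_{\rm ap}^s(\rr d)$ imply $f\in G_{\rm ap}^s(\rr d)$. I do not anticipate a substantial obstacle: the whole argument is a translation of hypotheses, and its only non-elementary ingredients are the two standard facts from H\"ormander's theory used above (a hypoelliptic polynomial obeys \eqref{hypoellipticpolynomial1} for some $\rho>0$, and the space of polynomials weaker than a given one is finite dimensional). The one point deserving a little care is that the passage from the coefficients $a_\alpha$ to the $c_j$ must not destroy the uniformity in $x$ built into the $G_{\rm ap}^1$ bounds — but this is automatic here, because the $c_j$ are produced from the $a_\alpha$ by a single fixed linear map independent of $x$.
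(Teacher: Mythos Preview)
Your proposal is correct and follows exactly the reduction the paper intends: the corollary is stated without proof because the discussion preceding \eqref{operatorfamily1}--\eqref{symbolfamily1} already explains that a constant-strength operator with $G_{\rm ap}^1$ coefficients can be written in the form \eqref{operatorfamily1} with $c_j\in G_{\rm ap}^1$, after which Theorem~\ref{constantstrengthhypoelliptic1} applies with $P_0=p(x_0,\cdot)$. You even spell out the linear-algebra step (that the $c_j$ are fixed linear combinations of the $a_\alpha$) which the paper leaves implicit, and your observation that the admissible range is $s\ge 1/\rho$ with $\rho$ from \eqref{hypoellipticpolynomial1} is accurate.
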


\begin{example}
Consider the operator
$$
p(x,D) = \sum_{|\alpha| \leq m} c_\alpha (x) \partial^\alpha
$$
with $c_\alpha \in G_{\rm ap}^1(\rr d)$ and assume uniform ellipticity:
$$
|p(x,\xi)| \geq \ep \eabs{\xi}^m, \quad |\xi| \geq A, \quad \ep>0.
$$
The polynomial $P_0=p(0,\cdot)$ is $1$-hypoelliptic (analytic hypoelliptic), and $\{ \xi^\alpha \}_{|\alpha| \leq m}$
is a basis for the polynomials weaker than $P_0$ (see \cite[Theorem 10.4.9]{Hormander1}).
We may then apply Theorem \ref{constantstrengthhypoelliptic1}.
\end{example}

Let us compare with previously existing results.
From \cite[Corollary 3.2]{Shubin4} we know that if $A$ is a uniformly elliptic operator with coefficients in $C_{\rm ap}^\infty(\rr d)$ and $f \in C_{\rm ap}(\rr d)$, then $A f \in C_{\rm ap}^\infty(\rr d)$ implies $f \in C_{\rm ap}^\infty(\rr d)$. Therefore $A f \in G_{\rm ap}^s(\rr d)$ implies $f \in C_{\rm ap}^\infty(\rr d)$.
On the other hand, for $A f \in G_{\rm ap}^s(\rr d)$ we have as well $f \in G^s(\rr d)$, in view of \cite{Hashimoto1,Rodino1,Zanghirati1}, hence $f \in C_{\rm ap}^\infty(\rr d) \cap G^s(\rr d)$.
Besides enlarging the space of admissible solutions to $W_{s,0}^1(\rr d)$, here we can actually conclude $f \in G_{\rm ap}^s(\rr d)$.

\end{document}